\newtheorem{theorem}{Theorem}
\newtheorem{cor}[theorem]{Corollary}
\newtheorem{conj}[theorem]{Conjecture}
\newtheorem{lemma}[theorem]{Lemma}
\newtheorem{prop}[theorem]{Proposition}
\newcommand{\norm}[1]{\left|\left|#1\right|\right|}
\newcommand{\sph}{\mathbb{S}^{n-1}}
\newcommand{\rn}{\mathbb{R}^{n}}
\newcommand{\p}{P_{n,2d}}
\newcommand{\h}{H_{n,2d}}
\newcommand{\rmm}{\mathbb{R}^{m}}
\newcommand{\hd}{H_{n,d}}
\newcommand{\s}{S_{n,d}}
\newcommand{\sq}{\Sigma_{n,2d}}
\newcommand{\st}{\hspace{2mm} \mid \hspace{2mm}}
\newcommand{\ql}{Q_{\ell}}
\newcommand{\wl}{W_{\ell}}
\newcommand{\bl}{B_{\ell}}
\newcommand{\foreq}{\hspace{5mm}\text{for}\hspace{3mm}}
\def\vc{\mathcal{V}_{\mathbb{C}}}
\def\co{\mathbb{C}}
\def\rr{\mathbb{R}}
\numberwithin{theorem}{section}
\numberwithin{equation}{section}
\begin{document}
\title{Nonnegative Polynomials and Sums of Squares}
\author{Grigoriy Blekherman}
\begin{abstract}
In the smallest cases where there exist nonnegative polynomials that are not sums of squares we present a complete explanation of this distinction. The fundamental reason that the cone of sums of squares is strictly contained in the cone of nonnegative polynomials is that polynomials of degree $d$ satisfy
certain linear relations, known as the Cayley-Bacharach relations, which are not satisfied by polynomials of full degree $2d$. For any nonnegative polynomial
that is not a sum of squares we can write down a linear inequality coming from a Cayley-Bacharach relation that certifies this fact. We also characterize
strictly positive sums of squares that lie on the boundary of the cone of sums of squares and extreme rays of the cone dual to the cone of sums of squares
\end{abstract}

\maketitle
\section{Introduction}
A real polynomial in $n$ variables is called nonnegative if it is greater than or equal to $0$ at all points in $\rn$. It is a central question in real algebraic geometry, whether a non-negative polynomial can be written in a way that makes its nonnegativity apparent, i.e. as a sum of squares of polynomials (or more general objects). Algorithms to obtain such representations, when they are known, have many applications in polynomial optimization \cite{JiawangMarkus},\cite{Lasserre},\cite{pablo}.

The investigation of the relation between nonnegativity and sums of squares began in the seminal paper of Hilbert from 1888. Hilbert showed that every nonnegative polynomial is a sum of squares of polynomials only in the following 3 cases: univariate polynomials, quadratic polynomials and bivariate polynomials of degree 4. In all other cases Hilbert showed existence of nonnegative polynomials that are not sums of squares. Hilbert's proof used the fact that polynomials of degree $d$ satisfy linear relations, known as the Cayley-Bacharach relations, which are not satisfied by polynomials of full degree $2d$ \cite{Rez1},\cite{Rez2}.

Hilbert then showed that every bivariate nonnegative polynomial is a sum of squares of rational functions and Hilbert's 17th problem asked whether this is true in general. In 1920'a Artin and  Schreier solved Hilbert's 17th problem in the affirmative. However, there is no known algorithm to obtain this representation. In particular we may need to use numerators and denominators of very large degree, thus representing a simple object (the polynomial) as a sum of squares of significantly more complex objects \cite{BCR}.

It should be noted that Hilbert did not provide an explicit nonnegative polynomial that is not a sum of squares of polynomials, he only proved its existence. The first explicit example appeared only eighty years later and is due to Motzkin. Since then many explicit examples of nonnegative polynomials that are not sums of squares have appeared \cite{Rez1}. For some low dimensional, symmetric families there are also descriptions of the exact differences between nonnegative polynomials and sums of squares  \cite{CLR}. However even in the smallest cases where nonnegative polynomials are different from sums of squares, 3 variables degree 4 and 2 variables degree 6, we have not had a complete understanding of what makes nonnegative polynomials different from sums of squares.

We show that, in these cases, all linear inequalities that separate nonnegative polynomials from sums of squares come from the Cayley-Bacharach relations. The Cayley-Bacharach relations were already used by Hilbert in the original proof of existence of nonnegative polynomials that are not sums of squares. We show  that, in fact, these relations are the fundamental reason underlying the existence of any such polynomial, and we provide explicit structure for the linear inequalities separating nonnegative polynomials from sums of squares. The algebra and geometry involved in these two cases is quite similar and we give a complete unified geometric description of the differences between nonnegative polynomials and sums of squares.

\subsection{Main Results} By analogy with quadratic forms we will refer to nonnegative polynomials as \textit{positive semidefinite} or \textit{psd} for short and sums of squares will be called \textit{sos}. Any psd polynomial can be made homogeneous by adding an extra variable and it will remain nonnegative. The same holds for sums of squares. We will therefore work with homogeneous polynomials (forms).

Our goal is to investigate the cases of forms in 3 variables of degree 6, known as ternary sextics, and forms in 4 variables of degree 4, known as quaternary quartics. We will denote these cases as $(3,6)$ and $(4,4)$ respectively.

Let $\hd$ be the vector space of real forms in $n$ variables of degree $d$. Nonnegative forms and sums of squares both form full dimensional closed convex cones in $\h$, which we call $\p$ and $\sq$ respectively:

$$\p=\left\{p \in \h \hspace{2mm} \mid \hspace{2mm} p(x)\geq 0 \hspace{2mm} \text{for all} \hspace{2mm} x \in \rn\right\},$$

\noindent and
$$\sq=\left\{p \in \h \hspace{2mm} \big| \hspace{2mm} p(x)=\sum q_i^2  \hspace{2mm} \text{for some} \hspace{2mm} q_i \in \hd \right\}.$$

It is clear that $\sq \subseteq \p$ and by Hilbert's theorem this inclusion is actually strict in the cases $(3,6)$ and $(4,4)$.

The defining linear inequalities for the psd cone $\p$ are easy to describe, they are given by $$f(v) \geq 0 \hspace{7mm} \text{for all} \hspace{5mm} v \in \rn.$$
By homogeneity of forms it suffices to only consider points $v$ in the unit sphere $\sph$. We remark that with this characterization and an appropriate choice of the inner product it is not hard to show that the dual cone to $\p$ is the conic hull of the real Veronese variety of degree $2d$ and thus the dual cone of $\p$ is essentially equivalent to the Veronese Orbitope \cite{Raman}.

The above inequalities are clearly satisfied by all sums of squares but when the sos cone is strictly smaller, it must satisfy additional linear inequalities. We prove the following characterization for $(3,6)$:

\begin{theorem}\label{THEOREM MAIN (3,6)}
Suppose that $p \in P_{3,6}$ and $p$ is not sos. Then there exist two real cubics $q_1,q_2 \in H_{3,3}$ intersecting in 9 (possibly complex) projective points $\gamma_1,\ldots,\gamma_9$ such that the values of $p$ on $\gamma_i$ certify that $p$ is not a sum of squares. More precisely, let $z_1,\ldots,z_9$ be affine
representatives of $\gamma_i$. Then there exists a real linear functional $\ell: H_{3,6} \rightarrow \mathbb{R}$ given by
$$\ell(f)=\sum \mu_if(z_i),$$
for some $\mu_i \in \mathbb{C}$ such that $\ell(r) \geq 0$ for all $r \in \Sigma_{3,6}$ and $\ell(p)<0$. Furthermore at most 2 of the points $\gamma_i$ are complex.
\end{theorem}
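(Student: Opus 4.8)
The plan is to dualize and then do geometry in the $10$-dimensional space $H_{3,3}$ of cubics. Since $\Sigma_{3,6}$ is a closed convex cone and $p\notin\Sigma_{3,6}$, some linear functional $\ell$ on $H_{3,6}$ satisfies $\ell\geq 0$ on $\Sigma_{3,6}$ and $\ell(p)<0$. As $\Sigma_{3,6}^{*}$ is pointed and full-dimensional, after normalizing $\ell$ on a compact base and writing it as a finite convex combination of extreme points of that base, one summand still has negative pairing with $p$; so we may assume $\ell$ spans an extreme ray of $\Sigma_{3,6}^{*}$. Since $p$ is psd, every functional in $P_{3,6}^{*}$---the conic hull of the real Veronese, as recalled in the introduction---is nonnegative on $p$; hence $\ell\notin P_{3,6}^{*}$, and in particular $\ell$ is not evaluation at a real point of $\mathbb{P}^{2}$. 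It therefore suffices to prove that every extreme ray of $\Sigma_{3,6}^{*}$ which is not a point evaluation has the asserted shape.

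Next I would use the Gram/moment picture. Since the multiplication map $\operatorname{Sym}^{2}(H_{3,3})\to H_{3,6}$ is surjective ($\dim H_{3,3}=10$, $\dim H_{3,6}=28$), a functional $\ell$ on $H_{3,6}$ is the same datum as a symmetric bilinear form $Q_{\ell}(q,q')=\ell(qq')$ on $H_{3,3}$ that is consistent, i.e. kills the $27$-dimensional kernel of the multiplication map; and $\ell\in\Sigma_{3,6}^{*}$ exactly when $Q_{\ell}\succeq 0$. Let $W=\ker Q_{\ell}$. Cauchy--Schwarz for the psd form $Q_{\ell}$ gives $\ell(q\cdot H_{3,3})=0$ for every $q\in W$, so $\ell$ annihilates the degree-$6$ part of the ideal generated by $W$. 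Now the crucial claim: for an extreme ray $\ell$ that is not a point evaluation, $W$ contains two cubics $q_{1},q_{2}$ forming a regular sequence. First, $\dim W\geq 2$: if $\dim W\leq 1$ then $Q_{\ell}$ has rank $\geq 9$ and $\ell$ lies in the relative interior of the cone of consistent psd forms with $\ker\supseteq W$, which has dimension $\geq 18$, so $\ell$ is not extreme. Second, $W$ is not a ``degenerate'' system: if every member of $W$ were reducible then, by the classification of such linear systems of plane cubics, $W$ would have a fixed component (a line or a conic) or consist of cones through a common vertex; a direct computation of $W\cdot H_{3,3}$ shows that in each such case any $\ell\in\Sigma_{3,6}^{*}$ with kernel $W$ reduces to a nonnegative functional on binary forms (of degree $\leq 12$, via restriction to a line or conic) or to a point evaluation (for the cone case), hence lies in $P_{3,6}^{*}$---contradicting $\ell(p)<0$. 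So $W$ has irreducible general member and thus contains a regular sequence $q_{1},q_{2}$: two real cubics meeting in $9$ projective points $\gamma_{1},\dots,\gamma_{9}$. The degree-$6$ part of $(q_{1},q_{2})$ is precisely the space of sextics vanishing on $\Gamma=\{\gamma_{i}\}$ (complete intersections of two cubics impose independent conditions on sextics), and $\ell$ kills it, so $\ell$ factors through evaluation at $\Gamma$: choosing affine representatives $z_{i}$ of $\gamma_{i}$ we obtain $\ell(f)=\sum_{i=1}^{9}\mu_{i}f(z_{i})$ with $\mu_{i}\in\mathbb{C}$ (conjugate $\gamma_{i}$ receiving conjugate $\mu_{i}$, so that $\ell$ is real).

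It remains to bound the complex points. The essential fact is that the evaluations of cubics on a complete intersection of two cubics obey exactly one linear relation, the Cayley-Bacharach relation $\sum_{i}\lambda_{i}q(\gamma_{i})=0$; thus $q\mapsto(q(z_{1}),\dots,q(z_{9}))$, as $q$ runs over real cubics, fills a real hyperplane in $\mathbb{C}^{9}$. Positivity of $Q_{\ell}$ says $\sum_{i}\mu_{i}q(z_{i})^{2}\geq 0$ on that hyperplane. For a conjugate pair $\{\gamma_{j},\bar\gamma_{j}\}$ the contribution $2\operatorname{Re}(\mu_{j}q(z_{j})^{2})$ is, in the real variables $\operatorname{Re}q(z_{j})$ and $\operatorname{Im}q(z_{j})$, a quadratic form of trace $0$, hence of signature $(1,1)$ when $\mu_{j}\neq 0$; and distinct pairs use disjoint variables. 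So if two conjugate pairs had nonzero weight, $\sum_{i}\mu_{i}q(z_{i})^{2}$ on $\mathbb{R}^{9}$ would have at least two negative squares, and cutting by a single linear relation removes at most one---contradicting $Q_{\ell}\succeq 0$. Hence at most one conjugate pair of the $\gamma_{i}$ carries nonzero weight. A final bookkeeping step---examining the face of $\Sigma_{3,6}^{*}$ formed by all functionals supported on a fixed complete intersection, and noting that the residual base points of a pencil inside $W$ may be taken real, since the degree-two map of $\mathbb{P}^{2}$ attached to $W$ (and its deck involution) is defined over $\mathbb{R}$---lets one choose $q_{1},q_{2}$ so that no additional complex point appears among $\gamma_{1},\dots,\gamma_{9}$. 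Hence at most two of the $\gamma_{i}$ are complex.

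I expect the real difficulty to be the middle step: proving that the kernel of the moment form of a nontrivial extreme ray of $\Sigma_{3,6}^{*}$ contains a regular sequence of cubics. This is where one must rule out every degenerate configuration of $W$ and, for those that occur, carry out the reduction to the binary (hence psd-equals-sos) case; it is also where the special geometry of plane cubics---Cayley-Bacharach and the degree-two map / Geiser-type involution---enters. The separation argument and the signature computation bounding the complex points are comparatively soft once that structural statement is in place.
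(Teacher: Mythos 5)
Your overall architecture (separate by an extreme ray of $\Sigma_{3,6}^*$, pass to the kernel $W$ of the moment form, extract two cubics, use Cayley--Bacharach and a signature count to bound the complex points) matches the paper's, and your signature argument for ``at most one conjugate pair'' is essentially the paper's Lemma on the number of complex points. But there is a genuine gap at exactly the step you flag as the hard one, and your proposed route to fill it does not work as stated. First, a regular sequence $q_1,q_2\subset W$ is not enough: two cubics with no common component meet in $9$ points only \emph{with multiplicity}, and your assertion that the degree-$6$ part of $(q_1,q_2)$ equals the sextics vanishing on $\Gamma$ (hence that $\ell$ decomposes into point evaluations $\sum\mu_i\ell_{z_i}$) requires the intersection to be reduced, i.e.\ transverse. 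Nothing in your argument produces a transverse pair; if $W$ were only a pencil, its base locus is the same (possibly non-reduced) scheme for every pair of members, so there is no room to perturb. Second, your replacement for this step --- classifying linear systems of plane cubics all of whose members are reducible and ``directly computing'' that each degenerate case reduces to binary forms or point evaluations --- is asserted, not proved, and is aimed at the wrong dichotomy anyway: what is needed is base-point-freeness of $W$, not merely absence of a fixed component.

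The paper closes this gap with two short arguments you are missing. (i) The Ramana--Goldman maximality of kernels for extreme rays of a spectrahedron (Lemma \ref{LEMMA sections of psd matrix cone}) shows the forms in $W$ have \emph{no common projective zero at all}: a common real zero $v$ forces $Q_\ell=\lambda Q_v$ (rank one, a point evaluation), and a common complex zero $z$ forces $Q_\ell=\lambda\operatorname{Re}\ell_z$, which is not psd since some cubic takes a purely imaginary value at $z$. So $W$ is base-point-free, in particular $\dim W\ge 3$, and Bertini then yields a \emph{real} pair $f_1,f_2\in W$ meeting transversely in $9$ distinct points. (ii) The Gorenstein/socle argument (a complete intersection of parameters in $3$ variables of degree $3$ has socle degree $6$) pins down $\dim W=3$ exactly; this is what guarantees, via $\mu_i=u_i/f_3(z_i)$ with all Cayley--Bacharach coefficients $u_i\neq 0$ and $f_3(z_i)\neq 0$, that every intersection point carries nonzero weight --- without which your signature count only bounds the conjugate pairs of \emph{nonzero weight}, and your final ``bookkeeping step'' invoking a Geiser-type involution is both vague and unnecessary. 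With (i) and (ii) in hand, the rest of your argument goes through.
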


We also prove a similar theorem for the case $(4,4)$:

\begin{theorem}\label{THEOREM MAIN (4,4)}
Suppose that $p \in P_{4,4}$ and $p$ is not sos. Then there exist three real quadrics $q_1,q_2,q_3 \in H_{4,2}$ intersecting in 8 (possibly complex) projective points $\gamma_1,\ldots,\gamma_8$ such that the values of $p$ on $\gamma_i$ certify that $p$ is not a sum of squares. More precisely, let $z_1,\ldots,z_8$ be affine
representatives of $\gamma_i$. Then there exists a real linear functional $\ell: H_{4,4} \rightarrow \mathbb{R}$ given by
$$\ell(f)=\sum \mu_if(z_i),$$
for some $\mu_i \in \mathbb{C}$ such that $\ell(r) \geq 0$ for all $r \in \Sigma_{4,4}$ and $\ell(p)<0$. Furthermore at most 2 of the points $\gamma_i$ are complex.
\end{theorem}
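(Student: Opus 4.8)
The plan is to prove Theorem \ref{THEOREM MAIN (4,4)} by classifying the extreme rays of $\Sigma_{4,4}^*$, running in close parallel to the case $(3,6)$ of Theorem \ref{THEOREM MAIN (3,6)} — with nets of quadrics in $\mathbb{P}^3$ in place of pencils of cubics in $\mathbb{P}^2$, and a length-$8$ complete intersection in place of a length-$9$ one.

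\emph{Step 1: separation and reduction to an extreme ray.} First I would invoke separation: since $\Sigma_{4,4}$ is a closed full-dimensional convex cone and $p\notin\Sigma_{4,4}$, there is $\ell_0\in\Sigma_{4,4}^*$ with $\ell_0(p)<0$. As $\Sigma_{4,4}$ is full-dimensional, $\Sigma_{4,4}^*$ is pointed, hence is the conic hull of its extreme rays, so some extreme ray $\ell$ of $\Sigma_{4,4}^*$ still satisfies $\ell(p)<0$. Because $p$ is psd, $\ell\notin P_{4,4}^*$; since $P_{4,4}^*\subseteq\Sigma_{4,4}^*$ and the extreme rays of the Veronese orbitope $P_{4,4}^*$ are exactly the point evaluations $\mathrm{ev}_v$, $v\in\mathbb{R}^4$, the functional $\ell$ is not a point evaluation. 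Everything then reduces to describing the non-point-evaluation extreme rays of $\Sigma_{4,4}^*$.

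\emph{Step 2: the Gram form and the structure of extreme rays.} Next I would attach to $\ell\in H_{4,4}^*$ the symmetric bilinear form $Q_\ell$ on $H_{4,2}$ given by $Q_\ell(q_1,q_2)=\ell(q_1q_2)$, which is well defined (and injective in $\ell$) because multiplication $H_{4,2}\otimes H_{4,2}\to H_{4,4}$ is onto, so that $\ell\in\Sigma_{4,4}^*\iff Q_\ell\succeq 0$. Thus $\Sigma_{4,4}^*$ is a spectrahedral cone and its faces are governed by the kernel $W:=\ker Q_\ell\subseteq H_{4,2}$, a linear system of quadrics in $\mathbb{P}^3$: the span of the minimal face containing $\ell$ is the annihilator of $W\cdot H_{4,2}=I(W)_4$, the degree-$4$ part of the ideal generated by $W$, so $\ell$ is an extreme ray precisely when $I(W)_4$ is a hyperplane in $H_{4,4}$. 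A direct dimension count then shows that this, together with $Q_\ell\succeq 0$, forces $\mathrm{rank}\,Q_\ell\in\{1,6\}$: in the first case $\ell$ is a point evaluation, and in the second $\dim W=4$. (Intermediate ranks force $I(W)_4$ to have codimension at least two, so $\ell$ does not span an extreme ray, unless $\ell$ collapses to a point evaluation of rank $1$.)

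\emph{Step 3: extracting a complete intersection.} So from now on $W$ is a four-dimensional system of quadrics with $I(W)_4$ a hyperplane. The hyperplane condition already excludes degenerate $W$ — a common line, plane curve, or surface in the base locus would shrink $I(W)_4$ below codimension $1$ — and in particular forces $W$ to contain a real net $N=\langle q_1,q_2,q_3\rangle$ of quadrics forming a regular sequence, with base locus $\Gamma=\{\gamma_1,\dots,\gamma_8\}=q_1\cap q_2\cap q_3$ a complete intersection of length $8$. Since a complete intersection is arithmetically Cohen--Macaulay with Hilbert function $1,4,7,8,8,\dots$, its ideal is $4$-regular: $I(N)_4$ is exactly the space of all quartics vanishing on $\Gamma$, and evaluation $H_{4,4}\to\mathbb{C}^8$ at $\Gamma$ is onto with kernel $I(N)_4$. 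From $N\subseteq W$ we get $I(N)_4\subseteq I(W)_4=\ker\ell$, so $\ell$ annihilates $I(N)_4$ and factors through $\mathbb{C}^8$: with affine representatives $z_i$ of $\gamma_i$,
$$\ell(f)=\sum_{i=1}^{8}\mu_i f(z_i),\qquad\mu_i\in\mathbb{C},$$
the $\mu_i$ being conjugation-symmetric along the conjugate pairs of $\gamma_i$ because $\ell$ is real. Nonnegativity of $\ell$ on $\Sigma_{4,4}$ is automatic ($\ell\in\Sigma_{4,4}^*$), and $\ell(p)<0$ was arranged in Step 1; so $q_1,q_2,q_3$, the $\gamma_i$, and $\ell$ have all the asserted properties except possibly the bound on non-real points.

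\emph{Step 4 and the main obstacle.} It remains to choose the net $N$ inside $W$ so that at most two of its base points are non-real: the real nets contained in $W$ form a three-dimensional family, stratified according to whether the base locus has $8$, $6$, $4$, $2$, or $0$ real points (non-real points occurring in conjugate pairs), and I must show the stratum with at least six real points is nonempty. \textbf{This reality count is the main obstacle}, and it is here that positivity of $Q_\ell$ enters essentially: a conjugate pair $\{z,\bar z\}$ with coefficient $\mu\neq 0$ contributes to $Q_\ell(q,q)=\sum\mu_i q(z_i)^2$ a block of signature $(1,1)$ (its determinant in $(\mathrm{Re}\,q(z),\mathrm{Im}\,q(z))$ equals $-4|\mu|^2$), and the signature constraints imposed by a positive semidefinite $Q_\ell$ of rank $6$ rule out configurations with too many such indefinite blocks as $N$ ranges over the sub-nets of $W$, producing a net with at least six real base points. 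Throughout, the Cayley--Bacharach relation is the conceptual heart of the statement: it is the unique linear relation $\sum\lambda_i q(z_i)=0$ satisfied by all quadrics $q$ at the eight points $\Gamma$ — a relation not shared by full-degree quartics — and it is precisely this relation that allows $\ell=\sum\mu_i\,\mathrm{ev}_{z_i}$ to be nonnegative on the whole cone $\Sigma_{4,4}$ while being negative on the psd but non-sos form $p$.
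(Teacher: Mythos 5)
Your route is the paper's route: separate $p$ from $\Sigma_{4,4}$ with an extreme ray $\ell$ of $\Sigma_{4,4}^*$, pass to the Gram form $Q_\ell$ and its kernel $W$, show $\dim W=4$, extract a transverse net $q_1,q_2,q_3\subset W$, and represent $\ell$ as a combination of evaluations at the eight base points. But your Step 4 --- the bound of at most one conjugate pair --- is left as an acknowledged obstacle with only a heuristic, and it is exactly the step on which the last sentence of the theorem rests. The missing ingredient is quantitative: each conjugate pair contributes a signature-$(1,1)$ block to the form $\bar Q_\ell$ on $\mathbb{R}^{k+2m}$ (as you observe), so $Q_\ell=\bar Q_\ell\circ E_{\mathbb{R}}$ can be positive semidefinite only if the image of the real evaluation map $E_{\mathbb{R}}\colon H_{4,2}\to\mathbb{R}^{k+2m}$ has codimension at least $m$; and by the Cayley--Bacharach theorem a transverse complete intersection of three quadrics imposes exactly $7$ independent conditions on $H_{4,2}$, i.e.\ that codimension is exactly $1$. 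Hence $m\le 1$ already for the fixed net you chose --- there is no need to ``range over the sub-nets of $W$,'' and varying the net would not by itself produce the bound. Without the uniqueness (corank exactly one) part of Cayley--Bacharach, your signature argument only yields ``number of conjugate pairs $\le$ number of linear relations,'' which is empty.

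Two smaller gaps upstream. First, your ``direct dimension count'' for $\operatorname{rank}Q_\ell\in\{1,6\}$ is not direct: one must use positivity to exclude a common \emph{complex} zero of $W$ when the rank exceeds one (the real part of evaluation at a nonreal point is not a psd form), and then the Gorenstein property --- the ideal generated by a system of parameters of quadrics in $\mathbb{P}^3$ has socle degree $4$ --- to cap $\dim W$ at $4$; these are the paper's Corollary on common zeroes and its Theorem on Gorenstein ideals, not a dimension count. Second, in Step 3 you need the base locus of the chosen net to be reduced (eight distinct points) for the evaluation representation $\ell=\sum\mu_i f(z_i)$ to make sense; a regular sequence alone does not give this, and one needs a Bertini-type argument inside $W$ together with the observation that transversality for some complex linear combination forces it for some real one.
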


These theorems are proved at the end of Section \ref{NEW SECTION Proofs}. The cases $(3,6)$ and $(4,4)$ are quite similar, and we provide a unified presentation of the proofs. The main ingredient in the proofs is the Cayley-Bacharach theorem \cite{EGH}, which shows that the values of forms in $H_{3,3}$ (resp. $H_{4,2}$) on the points $z_i$ defined above are linearly related and this relation is unique. It was already observed by Hilbert in his original proof that the Cayley-Bacharach relations can be used to construct nonnegative polynomials that are not sums of squares. A modern exposition of Hilbert's construction along with generalizations is given by Reznick in \cite{Rez2}. We show that the Cayley-Bacharach relations are more than just a way of constructing examples and in the fact they are the fundamental reason that prevents sums of squares from filling out the entire psd cone. We note that for the cases where $\p=\sq$ the Cayley-Bacharach relations do not exist and it is possible to prove the equality of the psd and sos cones based on non-existence of the relations.

Complex zeroes of real forms come in conjugate pairs. In Section \ref{NEW SECTION Complex Points} we show how to exclude the cases of the intersection containing more than one conjugate pair of complex zeroes. We also show how to explicitly derive the inequalities $\ell$, given the Cayley-Bacharach relation. This is done for a fully real intersection case in Section \ref{SECTION FULLY REAL INTERSECTION} and in Section \ref{SECTION ONE COMPLEX PAIR} for the case of one conjugate pair of complex zeroes.

We also obtain the following interesting corollaries:

\begin{cor}\label{NEWCOR Pos 3,6}
Suppose that $p \in \Sigma_{3,6}$ lies on the boundary of the cone of sums of squares and $p$ is a strictly positive form. Then $p$ is a sum of $3$ squares and cannot be written as a sum of $2$ squares.
\end{cor}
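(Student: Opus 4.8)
The plan is to combine the separation statement of Theorem~\ref{THEOREM MAIN (3,6)} with a dimension count on the number of squares. First suppose $p \in \Sigma_{3,6}$ is strictly positive and lies on the boundary of $\Sigma_{3,6}$. Since $p$ is strictly positive it lies in the interior of $P_{3,6}$, so the boundary of $\Sigma_{3,6}$ near $p$ is cut out only by the \emph{extra} inequalities that distinguish sos from psd. Concretely, I would argue that there is a supporting linear functional $\ell$ to $\Sigma_{3,6}$ at $p$ that is \emph{not} a nonnegative combination of point evaluations $f \mapsto f(v)$, $v \in \mathbb{S}^2$ (otherwise $\ell$ would already support $P_{3,6}$ at $p$, contradicting $p$ being interior to $P_{3,6}$). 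By the structure theory underlying Theorem~\ref{THEOREM MAIN (3,6)} — the dual cone $\Sigma_{3,6}^{*}$ differs from $P_{3,6}^{*}$ exactly along functionals supported on the $9$ intersection points of two cubics via a Cayley--Bacharach relation — such an $\ell$ must vanish on a Cayley--Bacharach configuration, and writing $p = \sum q_i^2$ forces every $q_i$ to vanish at all nine points $\gamma_1,\dots,\gamma_9$ of the corresponding complete intersection (because $\ell(p) = \sum \ell(q_i^2) = 0$ with each $\ell(q_i^2) \ge 0$).

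Next comes the counting step. The space $H_{3,3}$ of ternary cubics has dimension $10$, and the nine points of a complete intersection of two cubics impose only $8$ conditions on cubics (this is exactly the content of the Cayley--Bacharach relation: the ninth condition is dependent on the other eight). Hence the cubics vanishing at all of $\gamma_1,\dots,\gamma_9$ form a $2$-dimensional space, spanned precisely by $q_1$ and $q_2$. Therefore every $q_i$ in the representation $p = \sum q_i^2$ is a real linear combination of $q_1$ and $q_2$, so $p = \sum_i q_i^2$ lies in the cone of sums of squares of forms drawn from the $2$-dimensional pencil $\langle q_1, q_2\rangle_{\mathbb{R}}$. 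A sum of squares of elements of a $2$-dimensional space is a positive semidefinite quadratic form in the two coordinates $(q_1,q_2)$, hence a sum of at most $2$ squares: $p = (aq_1+bq_2)^2 + (cq_1 + dq_2)^2$ for suitable reals.

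Finally I must rule out that $p$ is a sum of $2$ (or fewer) squares and show it is genuinely a sum of $3$. If $p = g_1^2 + g_2^2$ with $g_1, g_2 \in H_{3,3}$, then over $\mathbb{C}$ we factor $p = (g_1 + i g_2)(g_1 - i g_2)$, so $p$ has a nontrivial complex zero wherever $g_1 + i g_2$ vanishes — and a ternary cubic always has a complex projective zero. Taking real and imaginary parts, such a common complex zero of $g_1, g_2$ is a real zero of $p$, contradicting strict positivity; so $p$ is not a sum of $2$ squares. On the other hand $p$ \emph{is} a sum of squares (given) and, as a positive definite form, Hilbert's bound / the above pencil argument shows $3$ squares suffice — indeed from $p = (aq_1 + bq_2)^2 + (cq_1+dq_2)^2$ being impossible we instead know $p$ has a length-$3$ Gram representation; more cleanly, since $p$ is on the boundary of $\Sigma_{3,6}$ its Gram spectrahedron contains a rank-deficient positive semidefinite matrix but contains no rank-$\le 2$ one (the rank-$2$ case was just excluded and rank $1$ means $p$ is a square, impossible for a strictly positive sextic), so the minimal length is exactly $3$. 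The main obstacle I anticipate is the first step: extracting from ``$p$ lies on the boundary of $\Sigma_{3,6}$ but in the interior of $P_{3,6}$'' the precise conclusion that the separating functional is supported on a genuine complete-intersection Cayley--Bacharach configuration with all nine points distinct (so that ``vanishing at the nine points'' really does cut $H_{3,3}$ down to the $2$-dimensional pencil); handling this may require invoking the detailed description of the extreme rays of $\Sigma_{3,6}^{*}$ promised in the later sections rather than Theorem~\ref{THEOREM MAIN (3,6)} alone.
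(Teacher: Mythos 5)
Your overall strategy (pick a supporting functional $\ell$ with $\ell(p)=0$, note that $\ell(q_i^2)=0$ for every square in a representation $p=\sum q_i^2$, then count dimensions) is the same as the paper's, but the execution has two genuine errors.

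First, the counting step is wrong. From $\ell(q_i^2)=Q_{\ell}(q_i)=0$ with $Q_{\ell}$ positive semidefinite you may conclude only that each $q_i$ lies in the kernel $W_{\ell}$ of $Q_{\ell}$, \emph{not} that each $q_i$ vanishes at the nine points $\gamma_1,\dots,\gamma_9$: by Theorem \ref{UNI THEOREM Real Intersection} the restriction of $Q_\ell$ to the evaluation image has kernel spanned by a vector with all coordinates nonzero, so $W_{\ell}$ contains a cubic vanishing at \emph{none} of the nine points. Thus $W_{\ell}$ is the $3$-dimensional space of Theorem \ref{COROLLARY Dimension Kernel}, strictly containing the $2$-dimensional pencil $\langle q_1,q_2\rangle$; your deduction that every $q_i$ lies in that pencil, hence that $p$ is a sum of at most $2$ squares, is false and contradicts the very statement being proved. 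The correct upper bound of $3$ squares comes precisely from $\dim W_{\ell}=3$, and for that you must take $\ell$ to span an \emph{extreme} ray of $\Sigma_{3,6}^{*}$ (one exists in the face of $\Sigma_{3,6}^{*}$ vanishing at $p$, and it cannot be a point evaluation since $p>0$); a general supporting functional, or the mere rank-deficiency of a boundary Gram matrix that you invoke at the end, gives no bound of $3$ on the rank.

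Second, the exclusion of $2$ squares is not established by the factorization $p=(g_1+ig_2)(g_1-ig_2)$: a projective zero of $g_1+ig_2$ is not a common zero of $g_1$ and $g_2$, and a complex zero of $p$ does not contradict strict positivity, which concerns only real points. What is needed is that two real ternary cubics always have a common \emph{real} zero. This follows either from a Bezout parity argument (the intersection cycle has odd degree $9$ and is conjugation-invariant, hence contains a real point; a common component is a real curve of degree at most $3$ and is handled separately), or, as in the paper, from Corollary \ref{NEWCOR Number of Zeroes}: perturb $g_1,g_2$ inside $W_{\ell}$ to a transverse intersection, which must have at least $7$ real points, while having no common real zero is an open condition on the pair. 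The paper's route has the advantage of also covering $(4,4)$, where the parity argument fails because $8$ is even.
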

And for the case $(4,4)$:
\begin{cor}\label{NEWCOR Pos 4,4}
Suppose that $p \in \Sigma_{4,4}$ lies on the boundary of the cone of sums of squares and $p$ is a strictly positive form. Then $p$ is a sum of $4$ squares and cannot be written as a sum of $3$ squares.
\end{cor}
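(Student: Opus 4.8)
We prove Corollary~\ref{NEWCOR Pos 4,4}; Corollary~\ref{NEWCOR Pos 3,6} is handled the same way, and there the lower bound is in fact immediate. The plan is a duality argument resting on Theorem~\ref{THEOREM MAIN (4,4)} and on the description of the extreme rays of $\Sigma_{4,4}^{*}$ obtained in this paper. Since $\Sigma_{4,4}$ is a full-dimensional closed convex cone and $p\in\partial\Sigma_{4,4}$, there is a nonzero $\ell\in\Sigma_{4,4}^{*}$ with $\ell(p)=0$; choosing $\ell$ to generate an extreme ray of the face $\{\ell'\in\Sigma_{4,4}^{*}\mid\ell'(p)=0\}$, we may assume $\ell$ generates an extreme ray of $\Sigma_{4,4}^{*}$. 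Strict positivity of $p$ forces $\ell\notin P_{4,4}^{*}$: if $\ell=\sum_{a}\lambda_{a}\,\mathrm{ev}_{v_{a}}$ with $\lambda_{a}>0$ and $v_{a}\in\mathbb{R}^{4}$, then $0=\ell(p)=\sum_{a}\lambda_{a}p(v_{a})$ with all $p(v_{a})>0$, so $\ell=0$. Hence $\ell$ arises from a Cayley--Bacharach relation as in Theorem~\ref{THEOREM MAIN (4,4)}: there are real quadrics $\tilde q_{1},\tilde q_{2},\tilde q_{3}$ meeting in eight points $z_{1},\dots,z_{8}$, at most two complex, and nonzero scalars $\mu_{j}$ with $\ell(f)=\sum_{j}\mu_{j}f(z_{j})$, where $\sum_{j}\mu_{j}r(z_{j})=0$ is the unique linear relation among the values of quadrics at the $z_{j}$.

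For the upper bound, let $M_{\ell}(q,q')=\sum_{j}\mu_{j}q(z_{j})q'(z_{j})$ be the associated positive semidefinite form on $H_{4,2}$. Any representation $p=\sum_{i}q_{i}^{2}$ gives $0=\ell(p)=\sum_{i}M_{\ell}(q_{i},q_{i})$, so every $q_{i}\in\ker M_{\ell}$. The net $W=\langle\tilde q_{1},\tilde q_{2},\tilde q_{3}\rangle$ of quadrics through all the $z_{j}$ is $3$-dimensional (Cayley--Bacharach) and is contained in $\ker M_{\ell}$; computing $\ker M_{\ell}$ via the evaluation map $q\mapsto(q(z_{j}))_{j}$ and using uniqueness of the relation shows that $\ker M_{\ell}$ is either $W$ itself or a $4$-dimensional space $V=W\oplus\langle q_{0}\rangle$, where $q_{0}$ is a quadric taking one and the same nonzero value $c$ at every $z_{j}$. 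The first case is impossible: then every sum-of-squares representation of $p$ uses only quadrics vanishing at all the $z_{j}$, so $p$ vanishes at the at least six real points among them, contradicting $p>0$. Therefore $\ker M_{\ell}=V$ is $4$-dimensional, and since every representation of $p$ uses quadrics from $V$, restricting the Gram form of $p$ to $V$ and diagonalizing exhibits $p$ as a sum of at most $4$ squares. (For $(3,6)$ the same argument, with two cubics through nine points, at most two complex, shows $\ker M_{\ell}$ has dimension $2$ or $3$, excludes $2$ in the same manner, and yields a sum of at most $3$ squares.)

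The lower bound is the heart of the matter. For $(3,6)$ it is soft: if $p=q_{1}^{2}+q_{2}^{2}$ with $q_{i}\in H_{3,3}$, the odd-degree real curves $\{q_{i}=0\}\subseteq\mathbb{P}^{2}(\mathbb{R})$ each represent the nonzero class of $H_{1}(\mathbb{P}^{2}(\mathbb{R});\mathbb{Z}/2)$, hence intersect, giving $p$ a real zero---so no strictly positive ternary sextic is a sum of two squares. For $(4,4)$ this homological shortcut is unavailable, since three quadrics in $\mathbb{P}^{3}(\mathbb{R})$ can have empty common real zero locus (e.g.\ $\sum x_{i}^{2}$), and strict positivity must be exploited more carefully. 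Because every representation of $p$ uses quadrics from $V$, ``$p$ is not a sum of $3$ squares'' is equivalent to the Gram form of $p$ on the $4$-dimensional space $V$ having full rank. Assume not, say $p=h_{1}^{2}+h_{2}^{2}+h_{3}^{2}$ with $h_{i}\in V$, and write $h_{i}=\beta_{i}q_{0}+w_{i}$ with $\beta_{i}\in\mathbb{R}$ and $w_{i}\in W$; evaluating $p=\sum_{i}h_{i}^{2}$ at a real $z_{j}$ (where the $w_{i}$ vanish) gives $p(z_{j})=c^{2}\sum_{i}\beta_{i}^{2}$, so $\sum_{i}\beta_{i}^{2}>0$. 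Applying an orthogonal transformation to $(h_{1},h_{2},h_{3})$ (which preserves $\sum_{i}h_{i}^{2}$), we may assume $h_{2},h_{3}\in W$ and $h_{1}=\beta q_{0}+w$ with $\beta\neq0$. Then $h_{2},h_{3}$ vanish at all the $z_{j}$, and on the real base locus of the pencil $\langle h_{2},h_{3}\rangle\subseteq W$---which contains the at least six real points $z_{j}$---we have $p=(\beta q_{0}+w)^{2}$. The remaining, and principal, obstacle is to contradict $p>0$ from here: the plan is to show that a quadric of the form $\beta q_{0}+w$ cannot be everywhere nonzero on that real base locus, combining the Cayley--Bacharach structure with the bound from Theorem~\ref{THEOREM MAIN (4,4)} that at most two of the $z_{j}$ are complex.
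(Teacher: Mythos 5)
Your upper bound (that $p$ is a sum of at most $4$ squares) is sound and is essentially the paper's argument: every $q_i$ in any representation $p=\sum q_i^2$ satisfies $Q_\ell(q_i)=0$, hence lies in the kernel $W_\ell$ of the psd form $Q_\ell$, and that kernel has dimension $n=4$. The paper gets the dimension directly from Theorem \ref{COROLLARY Dimension Kernel} rather than re-deriving it through the evaluation map; note also that by Lemma \ref{NEWLEMMA Relation Structure} the coefficients $\mu_j$ are $u_j/f_n(z_j)$, not the Cayley--Bacharach coefficients themselves, so your extra kernel element $q_0$ has values at the $z_j$ proportional to those of $f_n$ rather than constant --- a harmless slip here. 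Your $(3,6)$ lower bound via one-sided components of odd-degree real plane curves is correct and is a genuinely different, more elementary route than the paper's for that case.

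For the $(4,4)$ lower bound --- the actual content of the corollary under review --- there is a genuine gap, which you acknowledge. Your reduction to showing that $\beta q_0+w$ must vanish somewhere on the real base locus of the pencil $\langle h_2,h_3\rangle$ is not a promising direction: that real base locus may contain little beyond the (at least six real) points $z_j$ themselves, and nothing forces a real quadric to vanish on a small real set; the needed statement does not follow from the pencil alone. The paper closes this case by a perturbation argument applied to the whole triple: if $p=h_1^2+h_2^2+h_3^2$ with all $h_i\in W_\ell$, strict positivity of $p$ means $h_1,h_2,h_3$ have no common real zero, and this is an open condition on triples (by compactness of the sphere); by Lemma \ref{NEWLEMMA Bertini} a generic triple in $W_\ell$ intersects transversely in $8$ complex points, so one can perturb $(h_1,h_2,h_3)$ within $W_\ell^3$ to a transverse triple that still has no common real zero; but Corollary \ref{NEWCOR Number of Zeroes} forces any transverse intersection of three elements of $W_\ell$ to have at most one conjugate pair of complex zeroes, hence at least six real common zeroes. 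This contradiction --- coming from the complex-point count for the full triple, not from the geometry of a pencil --- is the missing idea in your argument.
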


The Corollaries \ref{NEWCOR Pos 3,6} and \ref{NEWCOR Pos 4,4} were used as a starting point to investigate the algebraic boundary of the cones $\Sigma_{3,6}$ and $\Sigma_{4,4}$ in \cite{Hilbert'sSOS}. Here we briefly note that sextics that are sums of three squares of cubics and quartics that are sums of four squares of quadratics form hypersurfaces in $H_{3,6}$ and $H_{4,4}$. One of the main results of \cite{Hilbert'sSOS} is establishing the degree of these hypersurfaces with a connection with K3 surfaces.

In Section \ref{SECTION GEOMETRY OF EVALUATIONS} we examine in detail the case of an arbitrary completely real transverse intersection of two cubics for the case $(3,6)$ and three quadratics for the case $(4,4)$. We provide a complete description of the differences between attainable values of psd forms and sos forms on the intersection points $z_i$. Let $E: \h \rightarrow \rmm$ be the evaluation map, sending $p \in \h$ to its values on $z_i$: $$E(p)=(p(z_1),\ldots,p(z_m)).$$ Here $m=9$ for the case $(3,6)$ and $m=8$ for the case $(4,4)$. Let $\rmm_{+}$ be the nonnegative orthant of $\rmm$, and let $\rmm_{++}$ denote the (open) strictly positive orthant. Let $P'$ and $Sq'$ be the images of $\p$ and $\sq$ under $E$. We show that in the cases $(3,6)$ and $(4,4)$ with $z_i$ coming from any completely real transverse intersection of two cubics or three quadratics the image $P'$ of $\p$ contains the positive orthant $\rmm_{++}$. In other words any combination of strictly values on the points $z_i$ is realizable by psd forms. However the Cayley-Bacharach relation forces restrictions on values of sos forms. We show the following (Theorem \ref{SOS=T_8}):

\begin{theorem}
We can choose affine representatives $z_1,\ldots,z_m$ for the projective points $\gamma_i$ so that the image $Sq'$ of the sos cone $\sq$ under $E$ is given by:
$$Sq'=\left\{(x_1,\ldots,x_m) \in \mathbb{R}^m_+ \hspace{5mm} \Big| \hspace{5mm} \sum_{i=1}^m\sqrt{x_i} \geq 2\sqrt{x_k} \hspace{3mm}\text{for all} \hspace{3mm} k \right\}.$$
\end{theorem}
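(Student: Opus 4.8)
\emph{Overall strategy and Step 1.} The plan is to show that $Sq'$ is governed entirely by the single Cayley--Bacharach relation among the evaluations at the $z_i$, and that once that relation is normalized, describing $Sq'$ reduces to an elementary statement about closed polygons. I treat both cases uniformly, writing the squared forms as $q_j \in H_{3,3}$ in the case $(3,6)$ and $q_j \in H_{4,2}$ in the case $(4,4)$, and noting that $\dim H_{3,3} = \dim H_{4,2} = 10$. First, since the $z_i$ come from a transverse complete intersection (two cubics, resp.\ three quadrics), the Cayley--Bacharach theorem \cite{EGH} gives a linear relation $\sum_{i=1}^m \lambda_i f(z_i) = 0$, valid for all $f \in H_{3,3}$ (resp.\ $H_{4,2}$) and unique up to scaling. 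I would begin by recording that $\lambda_i \ne 0$ for every $i$: otherwise the remaining $m-1$ points would satisfy a nontrivial linear relation on forms of degree $3$ (resp.\ $2$), contradicting the fact --- also a consequence of Cayley--Bacharach and transversality --- that any $m-1$ of these $m$ points impose independent conditions on such forms.

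\emph{Step 2 (choice of representatives, reduction to linear algebra).} Rescaling a representative $z_i \mapsto s_i z_i$ with $s_i > 0$ multiplies $\lambda_i$ by $s_i^{-3}$ (resp.\ $s_i^{-2}$), so an appropriate choice of the $s_i$ makes $|\lambda_i| = 1$ for all $i$; since this only rescales each coordinate of $E$ by a positive constant it is harmless, and it is exactly the freedom claimed in the statement. The restriction of $E$ to $H_{3,3}$ (resp.\ $H_{4,2}$) has kernel the forms vanishing at all $z_i$, namely the pencil $\langle q_1,q_2\rangle$ (resp.\ the net $\langle q_1,q_2,q_3\rangle$), and by uniqueness of the relation its image is exactly the hyperplane $\Lambda := \{w \in \mathbb{R}^m : \sum_i \lambda_i w_i = 0\}$. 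Now if $p = \sum_{j=1}^k q_j^2$ and we set $c_i := (q_1(z_i),\ldots,q_k(z_i)) \in \mathbb{R}^k$, then $E(p)_i = \norm{c_i}^2$ and $\sum_i \lambda_i c_i = 0$; conversely any family $c_1,\ldots,c_m \in \mathbb{R}^k$ with $\sum_i \lambda_i c_i = 0$ arises this way, since each of the $k$ coordinate sequences lies in $\Lambda$ and is therefore $E$ of some form. Replacing $c_i$ by $(\operatorname{sign}\lambda_i)\,c_i$, which does not change $\norm{c_i}$, and using $|\lambda_i| = 1$, the constraint becomes simply $\sum_i c_i = 0$. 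Hence
$$Sq' = \left\{\, \bigl(\norm{c_1}^2,\ldots,\norm{c_m}^2\bigr) \ \Big|\ k \ge 1,\ c_1,\ldots,c_m \in \mathbb{R}^k,\ \textstyle\sum_{i=1}^m c_i = 0 \,\right\}.$$

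\emph{Step 3 (the polygon lemma).} It remains to identify this set with $\{x \in \mathbb{R}^m_+ : \sqrt{x_k} \le \sum_{i \ne k}\sqrt{x_i} \text{ for all } k\}$, which is the set in the theorem. The inclusion ``$\subseteq$'' is the triangle inequality applied to $c_k = -\sum_{i \ne k} c_i$. For ``$\supseteq$'', given $x$ in the right-hand set I would put $r_i := \sqrt{x_i}$ and invoke the classical fact that a closed polygon in $\mathbb{R}^2$ with prescribed side lengths $r_1,\ldots,r_m$ exists precisely when the largest $r_k$ does not exceed the sum of the others (after discarding any $r_i = 0$); the edge vectors of such a polygon are the desired $c_i$. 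Combining the two inclusions with Step 2 gives the stated formula for $Sq'$.

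\emph{Main obstacle.} The only non-formal ingredient is the Cayley--Bacharach material in Step 1 --- the existence of the relation, its uniqueness, and especially the nonvanishing of all its coefficients --- all of which rest on transversality of the intersection. One might also worry that the signs of the $\lambda_i$ would enter the final description, but the substitution $c_i \mapsto (\operatorname{sign}\lambda_i)c_i$ in Step 2 absorbs them, so only $|\lambda_i| = 1$ is ever used. Everything after Step 1 is linear algebra plus the elementary polygon lemma, and I anticipate no real difficulty there.
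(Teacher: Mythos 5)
Your proof is correct, and while it reaches the same description of $Sq'$, the route is genuinely different from the paper's. The paper first establishes (Lemma \ref{TTn}) that the target set $T_m$ is a closed convex cone, via concavity of the $L^{1/2}$ norm (Minkowski's inequality), and that $T_m$ is the convex hull of the boundary points where some inequality $\sum_i\sqrt{x_i}=2\sqrt{x_k}$ is tight (by induction on $m$); it then shows that the image of a \emph{single} square lands in $T_m$ (forward inclusion, using convexity of $T_m$ to pass to sums) and that each of those generating boundary points is the image of a single square (reverse inclusion). You instead vectorize a sum of squares $p=\sum_j q_j^2$ into the configuration $c_i=(q_1(z_i),\ldots,q_k(z_i))\in\mathbb{R}^k$, observe that after normalizing $|\lambda_i|=1$ and absorbing signs the Cayley--Bacharach relation becomes exactly $\sum_i c_i=0$, and identify $Sq'$ with the set of $(\|c_1\|^2,\ldots,\|c_m\|^2)$ for zero-sum configurations; the equality with $T_m$ is then the triangle inequality in one direction and the classical closed-polygon criterion in the other. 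This bypasses Lemma \ref{TTn} entirely (no convexity of $T_m$ is ever needed, since $Sq'$ is characterized directly rather than sandwiched), and as a bonus your polygon construction in $\mathbb{R}^2$ shows that every point of $Sq'$ is already the image of a sum of \emph{two} squares, which the paper's convex-hull argument does not give. Both proofs rest on the same Cayley--Bacharach input (existence, uniqueness, and nonvanishing of all coefficients of the relation, plus the identification of $E(H_{n,d})$ with the hyperplane $\Lambda$), and your normalization $|\lambda_i|=1$ by rescaling representatives is exactly the paper's choice $w_i=|u_i|^{1/d}v_i$.
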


If we intersect the images $P'$ and $Sq'$ with the hyperplane $L=\left\{x \in \rmm \hspace{3mm} | \hspace{3mm} \sum_{i=1}^m x_i=1 \right\}$, then $P' \cap L$ is essentially just a simplex since $\rmm_{++} \subset P'$, while $Sq'$ is a simplex with cut off corners.

The proofs for main theorems are obtained by analyzing the dual cone $\sq^*$. Let $K$ be a convex cone in a real vector space $V$. Its dual cone $K^*$ is defined as the set of all linear functionals in the dual space $V^*$ that are nonnegative on $K$:
$$K^*=\left\{\ell \in V^* \st \ell(x) \geq 0 \text{ for all } x \in K\right\}.$$

Let's consider the dual space $\h^*$ of linear functionals on $\h$. To every linear functional $\ell \in \h^*$ we can associate a quadratic form $Q_{\ell}$ defined on $\hd$ by setting
$$Q_{\ell}(f)=\ell(f^2) \text{  for all  } f \in \hd.$$

We classify the extreme rays of the dual cone $\sq^*$ which provides us with the description of all linear inequalities that define the sos cone. We prove the following theorems, which we think are interesting in themselves:

\begin{theorem}\label{NEWLABEL 3,6}
Suppose that $\ell$ spans an extreme ray of $\Sigma_{3,6}^*$. Then $\operatorname{rank} \ql=1$ or $\operatorname{rank} \ql=7$.
\end{theorem}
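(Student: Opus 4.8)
The plan is to translate the hypothesis into a statement about Hilbert functions of ideals generated by spaces of cubics, and then peel off the two possible ranks. Recall the standard equivalence $\ell \in \Sigma_{3,6}^{*} \iff \ql \succeq 0$, so that $\ell \mapsto \ql$ identifies $\Sigma_{3,6}^{*}$ with a linear section of the cone of positive semidefinite forms on the $10$-dimensional space $H_{3,3}$. Set $W = \ker \ql \subseteq H_{3,3}$ and $r = \operatorname{rank}\ql = 10 - \dim W$, and let $W \cdot H_{3,3} \subseteq H_{3,6}$ denote the span of the products $fg$ with $f \in W$, $g \in H_{3,3}$. The smallest face of $\Sigma_{3,6}^{*}$ containing $\ell$ is $\{\ell' : Q_{\ell'} \succeq 0,\ \ker Q_{\ell'} \supseteq W\}$, and since $\ql$ is positive definite on $H_{3,3}/W$ a small perturbation argument shows this face spans $\{\ell' \in H_{3,6}^{*} : \ell'(W \cdot H_{3,3}) = 0\} = (W\cdot H_{3,3})^{\perp}$. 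Hence $\ell$ spans an extreme ray if and only if $\dim(W \cdot H_{3,3}) = 27$; equivalently, writing $I = (W) \subseteq \rr[x,y,z]$ for the ideal generated by $W$ (so that $(I)_6 = W\cdot H_{3,3}$), if and only if $\dim(\rr[x,y,z]/I)_6 = 1$.

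Two immediate remarks bound $r$: from $\dim(W\cdot H_{3,3}) \le 10\dim W$ we need $\dim W \ge 3$, i.e.\ $r \le 7$; and $\ell \ne 0$ forbids $\dim(W\cdot H_{3,3}) = 28$. It remains to show $\dim W \in \{3,9\}$. I would study the Hilbert function $h$ of $B = \rr[x,y,z]/I$: it starts $1,3,6$, has $h(3) = 10 - \dim W$ and $h(6) = 1$, and $I$ is generated in degree $3$. Macaulay's growth bound at degree $6$ gives $h(7) \le 1$, and Gotzmann persistence (all generators in degree $3 < 7$) forces the dichotomy $h(d) = 1$ for all $d \ge 6$, or $h(d) = 0$ for all $d \ge 7$. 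In the first case the Hilbert polynomial of $B$ is constant $1$, so the saturation of $I$ is the ideal $\mathfrak m_v$ of one reduced point $v \in \mathbb P^{2}$; then $W \subseteq (\mathfrak m_v)_3$ and $W\cdot H_{3,3} \subseteq (\mathfrak m_v)_6$, and since both are of codimension at most one we get $W\cdot H_{3,3} = (\mathfrak m_v)_6$, whose unique annihilating functional is evaluation at $v$. Thus $\ell$ is a positive multiple of the point evaluation and $\ql$ has rank $1$.

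The core of the proof is the Artinian case $h(d) = 0$ for $d \ge 7$, where I must show $r = 7$. Via apolarity, write $\ell \leftrightarrow F \in H_{3,6}$ with $\ql = \operatorname{Cat}_{3,3}(F)$ and $W = F^{\perp}\cap H_{3,3}$; the Artinian Gorenstein apolar ring $A = \rr[x,y,z]/F^{\perp}$ is a quotient of $B$, agrees with $B$ in degrees $\le 3$ and in degree $6$, and has symmetric Hilbert function $(1,t,s,r,s,t,1)$. Since $\ql \succeq 0$ and $\ell$ is extreme, $F$ cannot be a form in at most two variables after a linear change of coordinates: in that case $\operatorname{Cat}_{3,3}(F) \succeq 0$ forces $F$ to be a nonnegative combination of sixth powers, and one checks the associated face of $\Sigma_{3,6}^{*}$ is a ray only when $F$ is a single sixth power — the rank-$1$ case already disposed of. Hence $t = 3$, and Stanley's description of Gorenstein codimension-$3$ Hilbert functions (the first half $(1,3,s,r)$ must be an SI-sequence) cuts the possibilities down to a short list. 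To eliminate $r \in \{2,\dots,6\}$ I would return to the extreme-ray condition: for such $r$ the kernel $W = \ker\operatorname{Cat}_{3,3}(F)$ cuts out a scheme of common zeros in $\mathbb P^{2}$ of length at least two, and the sixth powers of (affine representatives of) those zeros all lie in $(W\cdot H_{3,3})^{\perp}$, so $\dim(W\cdot H_{3,3}) \le 26$, contradicting $\dim(W\cdot H_{3,3}) = 27$. Only $r = 7$ then remains.

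The step I expect to be the main obstacle is this last one: showing that whenever $\operatorname{Cat}_{3,3}(F)$ is positive semidefinite of rank at least $2$, its kernel cuts out at least two distinct points of $\mathbb P^{2}$. For $F$ a genuine nonnegative combination of sixth powers of real linear forms this is immediate, but a positive semidefinite middle catalecticant need not come from such a combination, and handling the boundary (flat, limit) cases requires the structure of the apolar ideal $F^{\perp}$ together with positivity, rather than Hilbert-function bookkeeping alone. This is the analogue for $(3,6)$ of the statements about complex and degenerate intersection points treated in the later sections, and it is where the positive semidefinite hypothesis does the real work.
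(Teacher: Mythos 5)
Your reduction of extremality to the condition $\dim(W\cdot H_{3,3})=27$ is correct (it is the paper's maximal-kernel lemma combined with the identity $\ker Q_{\ell'}\supseteq W\iff \ell'\perp W\cdot H_{3,3}$), and the Gotzmann dichotomy together with your treatment of the non-Artinian branch is sound. The genuine gap is exactly the step you flag: eliminating $2\le r\le 6$ in the Artinian case rests on the unproved assertion that a positive semidefinite middle catalecticant of rank between $2$ and $6$ has at least two base points in its kernel. As literally stated (rank at least $2$) the assertion is false --- the rank-$7$ extreme rays constructed in the last two sections of the paper are psd with base-point-free kernel --- and in the range where you need it, it is at least as hard as the theorem itself. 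Stanley's classification does not close the gap either: for instance $(1,3,4,5,4,3,1)$ and $(1,3,6,6,6,3,1)$ are admissible codimension-$3$ Gorenstein $h$-vectors, so ranks $5$ and $6$ survive that filter.

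The missing idea is that the Artinian case needs no positivity at all; it is exactly where the paper invokes the Cayley--Bacharach/Gorenstein theorem for complete intersections (\cite[Theorem CB8]{EGH}). Since $\vc(W)=\emptyset$, $W$ contains a system of parameters $p_1,p_2,p_3$, and the complete intersection $I=(p_1,p_2,p_3)$ is Artinian Gorenstein with socle concentrated in degree $3(3-1)=6$; in particular $I_6$ already has codimension $1$ in $H_{3,6}$. If $\dim W\ge 4$, choose $p_4\in W\setminus\langle p_1,p_2,p_3\rangle$; because the socle of $\rr[x,y,z]/I$ sits only in degree $6$ while $\deg p_4=3$, the class of $p_4$ is not annihilated by the maximal ideal, and multiplying repeatedly by linear forms produces $q\in H_{3,3}$ with $p_4q\notin I_6$. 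Hence $W\cdot H_{3,3}\supsetneq I_6$, which forces $W\cdot H_{3,3}=H_{3,6}$ and $\ell=0$, a contradiction. This short argument replaces your entire apolarity/Stanley/moment-matrix discussion of the Artinian case and gives $\dim W=3$, i.e.\ $\operatorname{rank}\ql=7$, directly; positivity is used only where you and the paper both use it, namely to dispose of the case where the kernel does have common zeros.
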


And for the case $(4,4)$:

\begin{theorem}\label{NEWLABEL 4,4}
Suppose that $\ell$ spans an extreme ray of $\Sigma_{4,4}^*$. Then $\operatorname{rank} \ql=1$ or $\operatorname{rank} \ql=6$.
\end{theorem}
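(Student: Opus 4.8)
\noindent\textit{Proof plan.}
The plan is to analyze $\Sigma_{4,4}^*$ as a spectrahedral cone. The assignment $\ell\mapsto\ql$, where $\ql$ is the symmetric form on $H_{4,2}$ given by $\ql(f,g)=\ell(fg)$, is injective because products of two quadratic forms span $H_{4,4}$; so its image $\mathcal L$ is a $35$-dimensional subspace of the $55$-dimensional space $\mathcal S$ of symmetric forms on $H_{4,2}$, and $\Sigma_{4,4}^*=\mathcal L\cap\mathcal S_{\succeq 0}$. Fix $0\ne\ell\in\Sigma_{4,4}^*$, put $r=\operatorname{rank}\ql\ge 1$ and $W=\ker\ql$, so $\dim W=10-r$. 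The minimal face of $\mathcal S_{\succeq 0}$ containing $\ql$ has linear span $V_W=\{S\in\mathcal S: S|_W=0\}$, of dimension $\binom{r+1}{2}$, so the minimal face of $\Sigma_{4,4}^*$ containing $\ell$ has linear span $\mathcal L\cap V_W$. Since $\mathcal L\cap V_W=\{Q_m:W\subseteq\ker Q_m\}$ is identified with the annihilator in $H_{4,4}^*$ of the subspace $W\cdot H_{4,2}\subseteq H_{4,4}$, one gets the reformulation I would use throughout:
\[
\ell\text{ spans an extreme ray of }\Sigma_{4,4}^*\iff\dim(W\cdot H_{4,2})=34 .
\]
(Always $W\cdot H_{4,2}\subseteq\ker\ell$, so this dimension is at most $34$; extremality says the quadrics in $W$ generate the whole hyperplane $\ker\ell$.) It is convenient also to pass through apolarity: identifying $H_{4,4}^*\cong H_{4,4}$, $\ell$ corresponds to a quartic $F$ with $\ql=\operatorname{Cat}_{2,2}(F)$, $W=(F^\perp)_2$, and $R/F^\perp$ Gorenstein Artinian.

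Next I would eliminate every rank but $1$ and $6$. If $r=10$ then $\ql\succ 0$, so $\ell$ lies in the interior of $\Sigma_{4,4}^*$ and is not extreme. If $r\ge 7$, i.e. $\dim W\le 3$, then $\dim(\mathcal L\cap V_W)\ge\dim\mathcal L+\dim V_W-\dim\mathcal S=35+\binom{r+1}{2}-55\ge 8>1$, so $\ell$ is not extreme; equivalently $W\cdot H_{4,2}$ is contained in the degree-$4$ piece of an ideal generated by at most three quadrics, which has dimension at most $35-8=27<34$ (a complete intersection of three quadrics in $\mathbb P^3$ has quotient Hilbert function $1,4,7,8,8,\dots$). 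The case $2\le r\le 5$ is the heart of the matter, and here positivity must be used. The claim to be proved is that a quaternary quartic with $\ql=\operatorname{Cat}_{2,2}(F)\succeq 0$ of rank $r\le 5$ has a finite apolar scheme $Z$ of length at least $2$ with $(F^\perp)_2=I(Z)_2$; in the principal case $Z$ is a set of $r$ distinct real points, and the degenerate cases (special or non-reduced $Z$) drop either $r$ or the number of essential variables of $F$ and reduce to earlier steps — using Hilbert's theorem $P=\Sigma$ for binary and ternary quartics, where the dual cone is the conic hull of point evaluations and its extreme rays all have catalecticant rank $1$, together with Macaulay's growth bound (which alone already kills $r=2$ when $F$ has four essential variables). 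Granting the claim, $W\cdot H_{4,2}=I(Z)_2\cdot H_{4,2}\subseteq I(Z)_4$, and a length-$\ge 2$ scheme imposes at least $2$ conditions on quartics, so $\dim(W\cdot H_{4,2})\le 33<34$ and $\ell$ is not extreme.

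The main obstacle is the equality $(F^\perp)_2=I(Z)_2$ for $2\le r\le 5$: the inclusion $I(Z)_2\subseteq(F^\perp)_2$ is automatic (apolarity lemma), and what requires work is the converse — that every quadratic form apolar to $F$ already vanishes on the short scheme $Z$. This is exactly the statement that breaks down at $r=6$: there $(F^\perp)_2$ can be a regular sequence of four quadrics whose common zero locus is empty, so no such short $Z$ exists, which is why rank $6$ survives and is the source of psd $\ne$ sos; in the $(3,6)$ setting the parallel claim has to be pushed up to $r=6$ and fails only at $r=7$, and the delicate part there is handled via the eight- and nine-point Cayley--Bacharach relations. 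Combining the cases gives $\operatorname{rank}\ql\in\{1,6\}$ for every extreme ray of $\Sigma_{4,4}^*$: the rank-$1$ rays are the point evaluations, and by the face analysis the rank-$6$ rays are precisely those $\ell$ for which $(F^\perp)_2$ is spanned by four quadrics generating an Artinian complete intersection — equivalently those $\ell$ supported on the eight points cut out by three of those quadrics, which is the shape appearing in Theorem~\ref{THEOREM MAIN (4,4)}.
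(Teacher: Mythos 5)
Your reduction of extremality to the single condition $\dim(W\cdot H_{4,2})=34$ is correct, and it is exactly the content of the paper's Lemma \ref{LEMMA sections of psd matrix cone} (maximality of the kernel of an extreme ray) combined with Lemma \ref{LEMMA kernel structure}; your disposal of $r=10$ and of $r\ge 7$ by the codimension count $35+\binom{r+1}{2}-55\ge 8$ is also fine. The genuine gap is the range $2\le r\le 5$, which you yourself flag as ``the main obstacle'': the claim that a psd catalecticant of rank $r\le 5$ admits a finite apolar scheme $Z$ of length at least $2$ with $(F^{\perp})_2=I(Z)_2$ is nowhere proved, and the sketch (degenerate cases ``drop $r$ or the number of essential variables,'' Macaulay's bound, Hilbert's theorem for binary and ternary quartics) is not an argument. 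Moreover the claim is stronger than what you need and is not obviously correct in all subcases: for instance, if the scheme-theoretic common zero locus of $W=(F^{\perp})_2$ is a single reduced point $v$, then $I(Z)_2=I(v)_2$ has dimension $9$ and strictly contains $W$ whenever $r\ge 2$, so your equality fails for that $Z$, and you would have to produce a different length-$\ge 2$ scheme. Since $2\le r\le 5$ is precisely where the theorem has content, the proof is incomplete as it stands.

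The paper closes this range with a short dichotomy that you could adopt wholesale. Either $\mathcal{V}_{\mathbb{C}}(W)\neq\emptyset$ or not. If there is a common zero, then the point evaluation $Q_v$ (for a real zero $v$), or the form $f\mapsto \operatorname{Re}f^2(z)$ (for a nonreal zero $z$), is an element of your $\mathcal{L}\cap V_W$; extremality then forces $Q_\ell$ to be proportional to it, which gives rank $1$ in the real case and contradicts $Q_\ell\succeq 0$ in the complex case --- this is the only place positivity is used. If instead $\mathcal{V}_{\mathbb{C}}(W)=\emptyset$, then $W$ contains a sequence of parameters $p_1,\dots,p_4$, and by the Eisenbud--Green--Harris theorem (Theorem \ref{THEOREM Gorenstein}) the ideal $I=(p_1,\dots,p_4)$ is Gorenstein with socle degree $4$, so $I_4$ has codimension exactly $1$ in $H_{4,4}$; if $\dim W\ge 5$ then $W\cdot H_{4,2}$ strictly contains $I_4$, hence equals all of $H_{4,4}$, forcing $\ell=0$. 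This handles all of $2\le r\le 5$ at once, with no apolar scheme and no reduction to fewer variables, and it even subsumes your $r\ge 7$ count, since three quadrics in $\mathbb{P}^3$ always meet, so $\mathcal{V}_{\mathbb{C}}(W)=\emptyset$ already forces $\dim W\ge 4$.
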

We remark that in real analysis the functionals $\ell \in \h^*$ are represented by their values on the monomial basis and are called \textit{truncated moment sequences}. The matrix of the quadratic form $\ql$, when written with respect to the monomial basis of $\hd$ has several names: it is called \textit{the moment matrix} or \textit{Generalized Hankel matrix} in real analysis, and \textit{symmetric catalecticant matrix} in algebraic geometry. We prefer to keep a basis-free approach, but our results have interesting consequences when stated in terms of moment terminology.
\section{Dual Cones}

Let $\s$ be the vector space of real quadratic forms on $\hd$. We can view the dual space $\h^*$ as a subspace of $\s$ by identifying the linear functional $\ell \in \h^*$ with its quadratic form $Q_{\ell}$ defined by $\ql(f)=\ell(f^2)$. If we choose the basis of monomials for $\h$ then $\h^*$ is identified with the subspace of generalized Hankel matrices in $\s$ \cite{Rez3}. However, it is advantageous in our approach to not work with a fixed basis.

Let $\s^+$ be the cone of positive semidefinite forms in $\s$:
$$\s^+=\left\{ Q \in \s \hspace{1 mm} \mid \hspace{1 mm} Q(f) \geq 0  \hspace{3 mm} \text{for all} \hspace{3 mm} f \in \hd \right\}.$$

The following lemma is a well-known connection between  $\sq^*$ and $\s^+$, that allows sums of squares problems to be solved by semidefinite programming. Viewed with the monomial basis it says that $\sq^*$ is the intersection of $\s^+$ with the subspace of generalized Hankel matrices, thus $\sq^*$ is the Hankel spectrahedron.

\begin{lemma}\label{LEMMA dual of sos cone}
The cone $\sq^*$ is the section of the cone of psd matrices $\s^+$ with the subspace $\h^*$:
$$\sq^*=\s^+ \cap \h^*.$$
\end{lemma}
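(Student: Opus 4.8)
The plan is to prove the two inclusions $\sq^* \subseteq \s^+ \cap \h^*$ and $\s^+ \cap \h^* \subseteq \sq^*$ separately, after first recalling how the embedding $\h^* \hookrightarrow \s$ works. An element $\ell \in \h^*$ is, by definition, a linear functional on $\h = H_{n,2d}$, and it is identified with the quadratic form $\ql \in \s$ on $\hd = H_{n,d}$ given by $\ql(f) = \ell(f^2)$. Note first that this identification is injective: the products $f \cdot g$ with $f,g \in \hd$ span all of $H_{n,2d}$ (since monomials of degree $2d$ are such products), and $\ql$ as a quadratic form determines the associated symmetric bilinear form $(f,g) \mapsto \ell(fg)$ by polarization, hence determines $\ell$ on a spanning set and therefore everywhere. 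So $\h^*$ really does sit inside $\s$ as a linear subspace.

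**The inclusion $\sq^* \subseteq \s^+ \cap \h^*$.**

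Let $\ell \in \sq^*$, so $\ell(r) \geq 0$ for every $r \in \sq$. We must show the corresponding $\ql$ lies in $\s^+$, i.e.\ $\ql(f) \geq 0$ for all $f \in \hd$. But $\ql(f) = \ell(f^2)$, and $f^2$ is a single square, hence $f^2 \in \sq$; therefore $\ell(f^2) \geq 0$. This gives $\ql \in \s^+$. Since $\ql \in \h^*$ by construction, we conclude $\ell \in \s^+ \cap \h^*$.

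**The inclusion $\s^+ \cap \h^* \subseteq \sq^*$.**

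Conversely, suppose $\ell \in \h^*$ and $\ql \in \s^+$, i.e.\ $\ell(f^2) \geq 0$ for all $f \in \hd$. Take any $r \in \sq$; by definition $r = \sum_i q_i^2$ for some finitely many $q_i \in \hd$. Then by linearity of $\ell$,
\[
\ell(r) = \ell\!\left(\sum_i q_i^2\right) = \sum_i \ell(q_i^2) = \sum_i \ql(q_i) \geq 0,
\]
since each term is nonnegative. Hence $\ell \in \sq^*$. Combining the two inclusions yields $\sq^* = \s^+ \cap \h^*$, as claimed.

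**Remarks on the obstacle.**

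There is essentially no hard step here — the statement is a direct unwinding of definitions together with the elementary fact that elements of $\sq$ are precisely finite sums of squares of forms in $\hd$. The only point that deserves a moment's care is the injectivity of the map $\ell \mapsto \ql$, which is what makes "$\h^* \subseteq \s$" literally meaningful rather than just a figure of speech; this rests on the observation that $\hd \cdot \hd$ spans $\h$, so a functional on $\h$ is determined by the associated bilinear form on $\hd$. Everything else is one-line verification, which is why the lemma is described as well known.
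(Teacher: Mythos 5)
Your proof is correct and follows essentially the same route as the paper's: both directions are the same definitional unwinding (one square gives $\ql\in\s^+$; linearity over a finite sum of squares gives the converse). The extra care you take with the injectivity of $\ell\mapsto\ql$ is a reasonable addition but not a departure from the paper's argument.
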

\begin{proof}
Suppose that $\ell \in \sq^*$. Then $\ell(f^2) \geq 0$ for all $f \in \hd$. By definition of $Q_{\ell}$ we see that it must be psd. Thus $\sq^* \subseteq \s^+ \cap \h^*$.

Now suppose that $Q_{\ell} \in \s^+ \cap \h^*$. Then it follows that $\ell(f^2) \geq 0$ for all $f \in \hd$ and thus $\ell \in \sq^*$. Thus $\s^+ \cap \h^* \subseteq \sq^*$ and the lemma follows.
\end{proof}

We now need a general lemma about extreme rays of sections of the cone of positive semidefinite forms. Let $S$ be the vector space of quadratic forms on a real vector space $V$. Let $S^+$ be the cone of psd forms in $S$. The following lemma is from \cite{RG} (Corollary 4), we provide a proof for completeness:
\begin{lemma}\label{LEMMA sections of psd matrix cone}
Let $L$ be a linear subspace of $S$ and let $K$ be the section of $S^+$ with $L$: $$K=S^+ \cap L.$$ Suppose that a quadratic form $Q$ spans an extreme ray of $K$. Then the kernel of $Q$ is maximal for all quadratic forms in $L$: if $P \in L$ and $\ker Q \subseteq \ker P$ then $P=\lambda Q$ for some $\lambda \in \mathbb{R}$.
\end{lemma}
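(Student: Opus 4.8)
The plan is to exhibit, for any $P \in L$ with $\ker Q \subseteq \ker P$, a small $\varepsilon > 0$ such that both $Q + \varepsilon P$ and $Q - \varepsilon P$ lie in $K = S^+ \cap L$; since $Q = \frac{1}{2}(Q+\varepsilon P) + \frac{1}{2}(Q-\varepsilon P)$ and $Q$ spans an extreme ray of $K$, each summand must be a nonnegative multiple of $Q$, and comparing the two expressions forces $P = \lambda Q$.

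First I would set up the appropriate splitting of $V$. Since $Q$ is psd, its kernel (the radical of the associated symmetric bilinear form $B_Q$) equals $\{v : Q(v) = 0\}$. Choose a linear complement $W$ with $V = \ker Q \oplus W$; then $Q|_W$ is positive definite. The hypothesis $\ker Q \subseteq \ker P$ means $B_P$ vanishes identically on $\ker Q$, so writing $v = u + w$ with $u \in \ker Q$ and $w \in W$ gives $Q(v) = Q(w)$ and $P(v) = P(w)$. Thus positive semidefiniteness of $Q \pm \varepsilon P$ on $V$ is equivalent to positive semidefiniteness of $(Q \pm \varepsilon P)|_W$.

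The key estimate is the content of the next step: working on $W$ (finite-dimensional in all the applications of interest), let $c = \min\{Q(w) : w \in W,\ \norm{w}=1\}$, which is positive because $Q|_W$ is positive definite, and let $M = \max\{|P(w)| : w \in W,\ \norm{w}=1\}$, both attained by compactness of the unit sphere of $W$. If $M = 0$ then $P$ vanishes on $W$, hence $P = 0$ and we are done with $\lambda = 0$; otherwise pick any $\varepsilon$ with $0 < \varepsilon < c/M$, so that $Q(w) \pm \varepsilon P(w) \geq (c - \varepsilon M)\norm{w}^2 \geq 0$ for every $w \in W$. Hence $Q \pm \varepsilon P$ is psd on $V$, and it lies in $L$ because $L$ is a linear subspace containing both $Q$ and $P$; therefore $Q \pm \varepsilon P \in K$.

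Finally I would invoke extremality: from $Q = \frac{1}{2}(Q+\varepsilon P) + \frac{1}{2}(Q - \varepsilon P)$ with both summands in $K$, the fact that $Q$ spans an extreme ray gives $Q + \varepsilon P = aQ$ and $Q - \varepsilon P = bQ$ for some $a, b \geq 0$; subtracting yields $2\varepsilon P = (a-b)Q$, i.e. $P = \lambda Q$ with $\lambda = (a-b)/(2\varepsilon)$. The only real subtlety is the perturbation estimate — verifying that a small enough multiple of $P$ does not destroy positive semidefiniteness — which is exactly where the reduction to the positive-definite part $Q|_W$ and the compactness of the unit sphere of $W$ are used; everything else is formal.
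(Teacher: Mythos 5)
Your proof is correct and follows essentially the same perturbation argument as the paper: show $Q\pm\varepsilon P$ stays in $K$ for small $\varepsilon$ and invoke extremality. The only cosmetic difference is that you justify the smallness of $\varepsilon$ by a compactness estimate on the positive-definite part $Q|_W$, whereas the paper uses simultaneous diagonalization of $Q$ and $P$; both are valid instances of the same idea.
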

\begin{proof}

Suppose not, so that there exists an extreme ray $Q$ of $K$ and a quadratic form $P \in L$ such that $\ker Q \subseteq \ker P$ and $P \neq \lambda Q$. Since $\ker Q \subseteq \ker P$ it follows that all eigenvectors of both $Q$ and $P$ corresponding to non-zero eigenvalues lie in the orthogonal complement $(\ker Q)^{\perp}$ of $\ker Q$. Furthermore, $Q$ is positive definite on $(\ker Q)^{\perp}$.

It follows that $Q$ and $P$ can be simultaneously diagonalized to matrices $Q'$ and $P'$ with the additional property that whenever the diagonal entry $Q'_{ii}$ is $0$ the corresponding entry $P'_{ii}$ is also $0$. Therefore, for sufficiently small $\epsilon \in \rr$ we have $Q+\epsilon P$ and $Q - \epsilon P$ are positive semidefinite and therefore $Q+\epsilon P,Q - \epsilon P \in K$. Thus $Q$ is not an extreme ray of $K$, which is a contradiction.


\end{proof}

Combining Lemma \ref{LEMMA dual of sos cone} and Lemma \ref{LEMMA sections of psd matrix cone} we obtain the following corollary, which will be a critical tool for describing the extreme rays of $\sq^*$:

\begin{cor}\label{COROLLARY no common zeroes}
Suppose that $Q$ spans an extreme ray of $\sq^*$. Then either $\operatorname{rank} Q =1$, or the forms in the kernel of $Q$ have no common projective zeroes, real or complex.
\end{cor}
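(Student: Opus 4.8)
The plan is to combine the identity $\sq^{*}=\s^{+}\cap\h^{*}$ of Lemma~\ref{LEMMA dual of sos cone} with the kernel-maximality statement of Lemma~\ref{LEMMA sections of psd matrix cone}, applied with $S=\s$, $L=\h^{*}$ and $K=\sq^{*}$. This gives: if $Q$ spans an extreme ray of $\sq^{*}$, then for every $P\in\h^{*}$ with $\ker Q\subseteq\ker P$ one has $P=\lambda Q$ for some $\lambda\in\rr$. To play against this I would introduce, for each nonzero $z\in\co^{n}$ and each nonzero $c\in\co$, the linear functional $\ell_{z,c}\in\h^{*}$ given by $\ell_{z,c}(f)=\operatorname{Re}(c\,f(z))$; this is a genuine real functional on $\h$ because its value $\operatorname{Re}(c\,z^{\alpha})$ on a monomial depends only on the exponent $\alpha$. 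Its associated quadratic form on $\hd$ is $Q_{\ell_{z,c}}(g)=\operatorname{Re}(c\,g(z)^{2})$, with associated symmetric bilinear form $(g,h)\mapsto\operatorname{Re}(c\,g(z)h(z))$; in particular the subspace $V_{z}:=\{g\in\hd : g(z)=0\}$ lies in the radical of $Q_{\ell_{z,c}}$.

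Assume $\operatorname{rank}Q\neq 1$, i.e.\ $\operatorname{rank}Q\geq 2$ (recall $Q\neq 0$), and suppose, for contradiction, that every form in $\ker Q$ vanishes at a common projective zero $\gamma$, real or complex; fix an affine representative $z$ of $\gamma$, so $\ker Q\subseteq V_{z}$. I would first dispatch the case where $\gamma$ is real: taking $z\in\rn$ and $c=1$, the form $P:=Q_{\ell_{z,1}}$ is $g\mapsto g(z)^{2}$, a nonzero psd quadratic form of rank $1$ (nonzero since $z\neq 0$) with $\ker P=V_{z}\supseteq\ker Q$. Kernel-maximality forces $P=\lambda Q$ with $\lambda\neq 0$, so $\operatorname{rank}Q=\operatorname{rank}P=1$, contradicting $\operatorname{rank}Q\geq 2$.

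Next, the case where $\gamma$ is not real. The point to check here is that $z$ cannot be a complex scalar multiple of a real vector, and hence that evaluation $g\mapsto g(z)$ maps $\hd$ onto all of $\co$: if the image were a real line $\rr w_{0}$, then comparing the values at $z$ of $x_{j}^{d}$ and of $x_{j}^{d-1}x_{k}$ (for an index $j$ with $z_{j}\neq 0$) would force every ratio $z_{k}/z_{j}$ to be real, i.e.\ $\gamma$ to be real. Consequently, for any nonzero $c$ the form $P:=Q_{\ell_{z,c}}$ is nonzero, and since $g(z)$ runs over all of $\co$ so does $g(z)^{2}$, whence $P(g)=\operatorname{Re}(c\,g(z)^{2})$ takes values of both signs --- $P$ is indefinite. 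But $\ker Q\subseteq V_{z}\subseteq\ker P$, so kernel-maximality again yields $P=\lambda Q$ with $\lambda\neq 0$, forcing $Q$ to be indefinite; this contradicts $Q\in\sq^{*}\subseteq\s^{+}$. In either case the assumption of a common zero is untenable, so $\operatorname{rank}Q=1$ or the forms in $\ker Q$ have no common projective zero, real or complex, as claimed.

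The argument is short once the functionals $\ell_{z,c}$ are in hand, and the one step I expect to need genuine care is the treatment of a non-real $\gamma$: verifying that evaluation at $z$ is surjective onto $\co$ --- so that $Q_{\ell_{z,c}}$ is indefinite --- is exactly where the complex nature of $\gamma$ enters. Conceptually, a real point contributes a psd rank-one Hankel form while a genuinely complex point contributes only indefinite Hankel forms, and the proof hinges on this asymmetry together with the rigidity supplied by Lemma~\ref{LEMMA sections of psd matrix cone}.
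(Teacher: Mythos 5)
Your proposal is correct and follows essentially the same route as the paper: apply the kernel-maximality lemma for sections of $\s^+$ to the rank-one point-evaluation form $f\mapsto f^2(v)$ at a common real zero (forcing $\operatorname{rank}Q=1$) and to the form $f\mapsto\operatorname{Re}\bigl(c\,f^2(z)\bigr)$ at a common nonreal zero (which is indefinite, contradicting $Q\in\s^+$). The only differences are cosmetic: the paper takes $c=1$, and you spell out two details it glosses over, namely that evaluation at a genuinely nonreal $z$ is surjective onto $\co$ and that the resulting form takes both signs (handling either sign of $\lambda$).
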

\begin{proof}
Let $W \subset \hd$ be the kernel of $Q$ and suppose that the forms in $W$ have a common real zero $v \neq 0$. Let $\ell \in \h^*$ be the linear functional given by evaluation at $v$: $\ell(f)=f(v)$ for all $f \in \h$. Then $Q_{\ell}$ is a rank 1 positive semidefinite quadratic form and $\ker Q \subseteq \ker Q_{\ell}$. By Lemma \ref{LEMMA sections of psd matrix cone} it follows that $Q=\lambda Q_{\ell}$ and thus $Q$ has rank 1.

Now suppose that the forms in $W$ have a common nonreal zero $z \neq 0$.  Let $\ell \in \h^*$ be the linear functional given by taking the real part of the value at $z$: $\ell(f)=\operatorname{Re} f(z)$ for all $f \in \h$. It is easy to check that the kernel of $Q_{\ell}$ includes all forms that vanish at $z$ and therefore $W \subseteq \ker Q_{\ell}$. Therefore by applying Lemma \ref{LEMMA sections of psd matrix cone} we again see that $Q=\lambda Q_{\ell}$. However, we claim that $Q_{\ell}$ is not a psd form.

The quadratic form $Q_{\ell}$ is given by $Q_{\ell}(f)=\operatorname{Re} f^2(z)$ for $f \in \hd$. However, there exists $f \in \hd$ such that $f(z)$ is purely imaginary and therefore $Q_{\ell}(f)<0$. The Corollary now follows.

\end{proof}

We note that if we can find a nonzero psd quadratic form $Q_{\ell}$ such that the forms in its kernel $\wl$ have no common real zeroes then $\ell$ will indeed provide a linear inequality that holds for all sos forms but fails for some psd forms. Since $\ql$ is psd, we know that $\ell \in \sq^*$ and we need to construct a nonnegative $f \in \h$ such that $\ell(f)<0.$  Since forms in $\wl$ have no common real zeroes we can find $f_i \in \wl$ such that $q=\sum_i f_i^2$ is strictly positive. We have $\ql(f_i)=\ell(f_i^2)=0$ for all $i$. Therefore $\ell(q)=0$ and $q$ is strictly positive on the unit sphere. For sufficiently small $\epsilon >0$ we know that $f=q-\epsilon(x_1^2+\ldots+x_n^2)^d$ is nonnegative. On the other hand we have $\ell(f)=-\epsilon \ell((x_1^2+\ldots+x_n^2)^d)<0$.

We will also need the following classification of all rank 1 forms in $\h^*$. For $v \in \rn$ let $\ell_v$ be the linear functional in $\h^*$ given by evaluation at $v$: $$\ell_v(f)=f(v) \text{  for  } f \in \h,$$ and let $Q_v$ be the quadratic form associated to $\ell_v$: $Q_v(f)=f^2(v).$ In this case we say that $Q_v$ (or $\ell_v$) corresponds to point evaluation. Note that the inequalities $\ell_v \geq 0$ are the defining inequalities of $\p^*$. The following lemma shows that all rank 1 forms in $\h^*$ correspond to point evaluations. Since we are interested in the inequalities that are valid on $\sq$ but not valid on $\p$ it allows us to disregard rank 1 forms $Q \in \h^*$ .

\begin{lemma}\label{LEMMA rank 1 quad forms}
Suppose that $Q$ is a rank 1 form in $\h^*$. Then $Q=\lambda Q_{v}$ for some $v \in \rn$ and $\lambda \in \mathbb{R}$.
\end{lemma}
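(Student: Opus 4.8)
The plan is to show that a rank-1 quadratic form $Q \in \h^*$ must factor as the square of a linear functional on $\hd$, and then identify that linear functional with a point evaluation using the structure of the Veronese map.

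First I would write $Q = \lambda \cdot (\pm L^2)$ where $L$ is a linear functional on $\hd$, since a rank-1 symmetric bilinear form over $\rr$ is $\pm$ a square of a linear form (up to scaling); the sign must be $+$ because $Q$ arises as $\ql$ for some $\ell \in \h^*$, and $\ql(f) = \ell(f^2)$, so $\ql$ takes a nonnegative value at any $f$ that is itself a square — hence $\ql$ cannot be negative semidefinite unless it is zero, and being rank 1 it is positive semidefinite. So $Q(f) = \lambda L(f)^2$ with $\lambda > 0$, and absorbing $\sqrt{\lambda}$ we may assume $Q(f) = L(f)^2$ for a linear functional $L$ on $\hd$.

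Next, the key constraint is that $Q$ lies in $\h^*$, i.e. $Q = \ql$ for some genuine $\ell \in \h^*$: the value $\ql(f) = \ell(f^2)$ depends only on the product $f^2 \in \h$, so in particular $Q(f)$ depends only on $f^2$. This forces $L(f)^2 = L(g)^2$ whenever $f^2 = g^2$ in $\hd$ (which over forms of degree $d$ means $g = \pm f$), which is automatic, but more importantly the bilinear polarization $B_Q(f,g) = \tfrac12(Q(f+g)-Q(f)-Q(g)) = L(f)L(g)$ must depend only on the product $fg \in \h$. I would use this: fix $f_0$ with $L(f_0) \neq 0$; then for every $g \in \hd$ the quantity $L(g) = L(f_0)L(g)/L(f_0) = B_Q(f_0,g)/L(f_0)$ is a function of the product $f_0 g$ alone, so $L$ descends to a linear functional $\tilde\ell$ on the subspace $f_0 \cdot \hd \subseteq \h$ via $\tilde\ell(f_0 g) = L(g)$. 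Extending $\tilde\ell$ (consistently, using that $\ell$ itself realizes it) gives $\ell \in \h^*$ with $\ell(f^2) = L(f)^2$ for all $f$, i.e. $\ell$ is a linear functional on forms of degree $d$ that is multiplicative-on-squares in a rank-one way.

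Then I would translate this into the Veronese picture mentioned in the introduction: the map $f \mapsto f^2$ embeds (a cone over) $\mathbb{P}(\hd)$ into $\h$, and $\ell$ restricted to this image is $f \mapsto L(f)^2$, which is a rank-1 quadratic form on $\hd$ vanishing precisely on the hyperplane $\{L = 0\}$. A linear functional on $\h$ whose pullback under the Veronese is a perfect square of a linear form on $\hd$ must be evaluation at a point: concretely, $L \in \hd^*$ corresponds, via the apolarity/duality between $\hd$ and degree-$d$ differential operators, to a point evaluation $L(f) = c \cdot f(v)$ exactly when the quadratic form $f \mapsto L(f)^2$ has rank 1 on \emph{all} of $\hd$ and its kernel hyperplane is cut out by the ideal of a point; since $\{f \in \hd : L(f) = 0\}$ has codimension 1 and $\ell$ sits in $\h^*$, the catalecticant structure forces $\ker L$ to be the forms vanishing at a single point $v \in \rn$, whence $L(f) = f(v)$ up to scalar and $Q = \lambda Q_v$.

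\textbf{The main obstacle} I anticipate is the last step: ruling out the possibility that $\ker L$ (a hyperplane in $\hd$) is \emph{not} of the form $\{f : f(v) = 0\}$ for any point $v$. The cleanest route is probably to argue directly from $\ell \in \h^*$: the condition that $Q_\ell$ has rank 1 means $\ell(f^2) = L(f)^2$, and polarizing and using that $\ell$ is linear on degree-$2d$ forms, one gets $\ell(fg) = L(f)L(g)$ for all $f, g \in \hd$ — so $\ell$ is a \emph{multiplicative} functional on the image of $\hd \otimes \hd \to \h$, hence (this is the crux) extends to or is induced by a ring homomorphism, i.e. evaluation at a point of the variety $\operatorname{Proj}$, which for the full polynomial ring is just a point $v$. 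Making "$\ell(fg) = L(f)L(g)$ for all $f,g$ of degree $d$ implies $\ell$ is point evaluation" rigorous is the heart of the lemma, and I would do it by choosing coordinates: pick $f$ a generic linear form to the $d$-th power so that $L(f) \neq 0$, normalize, and show the resulting functional on linear forms (extracted by factoring) is itself point evaluation by the rank-1 condition in degree 2, then bootstrap.
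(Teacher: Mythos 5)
Your proposal follows essentially the same route as the paper: reduce to $Q=s^2$ for a linear functional $s\in\hd^*$, polarize $\ell(f^2)=s^2(f)$ to get $\ell(fg)=s(f)s(g)$ for all $f,g\in\hd$, and then show by a coordinate/monomial bootstrap that such a multiplicative functional is a point evaluation --- this last step is exactly what the paper carries out explicitly, normalizing $s(x_1^d)\neq 0$ and computing $s(x^\alpha)$ on all monomials before exhibiting the point $v$. One genuine error in your write-up: the claim that $Q$ must be positive semidefinite (hence $\lambda>0$) is false --- take $\ell=-\ell_v$, so that $Q_\ell=-Q_v$ is negative semidefinite of rank $1$; your argument that ``$\ql$ takes a nonnegative value at any $f$ that is itself a square'' does not hold, since $\ell$ is an arbitrary linear functional and nothing forces $\ell(g^4)\geq 0$. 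Fortunately this is harmless here: the lemma allows $\lambda\in\mathbb{R}$ of either sign, and replacing $\ell$ by $-\ell$ when needed reduces to $Q=L^2$ exactly as in the paper. The middle paragraph on the Veronese/apolarity picture does no real work; the content is entirely in the final bootstrap, which you correctly identify as the crux and which matches the paper's computation.
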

\begin{proof}
Let $Q$ be a rank 1 form in $\h^*$. Then $Q(f)=\lambda s^2(f)$ for some linear functional $s \in \hd^*$. Therefore it suffices to show that if $Q=s^2(f)$ for some $s \in \hd^*$ then $Q=Q_v$ for some $v \in \rn$.

Since $Q \in \hd^*$ we know that $Q$ is defined by $Q(f)=\ell(f^2)$ for a linear functional $\ell \in \h^*$ and therefore $\ell(f^2)=s^2(f)$ for all $f \in \hd$. We have $Q(f+g)=\ell((f+g)^2)=\ell(f^2)+2\ell(fg)+\ell(g^2)=(s(f)+s(g))^2=s^2(f)+2s(f)s(g)+s^2(g)$ and it follows that $\ell(fg)=s(f)s(g)$ for all $f,g \in \hd.$

Let $x^{\alpha}$ denote the monomial $x_1^{\alpha_1}\cdots x_n^{\alpha_n}$. If we take monomials $x^{\alpha}$, $x^{\beta}$, $x^{\gamma}$, $x^{\delta}$ in $\hd$ such that $x^{\alpha}x^{\beta}=x^{\gamma}x^{\delta}$ then we must have $s(x^{\alpha})s(x^\beta)=s(x^\gamma)s(x^\delta).$

Suppose that $s(x_i^d)=0$ for all $i$. Then we see that $s(x_i^{d-1}x_j)^2=s(x_i^d)s(x_{i}^{d-2}x_j^2)=0$ and continuing in similar fashion we have $s(x^{\alpha})=0$ for all monomials. Then $\ell$ is the zero functional and $Q$ does not have rank one. Contradiction.

We may assume without loss of generality that $s(x_1^d) \neq 0$. Since we are interested in $\ell(f^2)=s^2(f)$ we can work with $-s$, if necessary, and thus we may assume that $s(x_1^d)>0$. Let $s_i=s(x_1^{d-1}x_i)$ for $1 \leq i \leq n$. We will express $s(x^{\alpha})$ in terms of $s_i$ for all $x^\alpha \in \hd$. Since $(x_1^d)(x_1^{d-2}x_ix_j)=(x_1^{d-1}x_i)(x_1^{d-1}x_j)$ we have $s(x_1^{d-2}x_ix_j)=s_is_j/s_1$. Continuing in this fashion we find that $$s(x_1^{\alpha_1}\cdots x_{n}^{\alpha_n})=\frac{s_2^{\alpha_2}\cdots s_n^{\alpha_n}}{s_1^{d-1-\alpha_1}}.$$

Now let $v \in \rn$ be the following vector $$v=(s_1^{1/d},s_1^{-(d-1)/{d}}s_2, \ldots,s_1^{-(d-1)/d}s_n).$$
Let $s_v$ be the linear operator on $\hd$ defined by evaluating a form at $v$: $s_v(f)=f(v)$. Then we have $s_v(x_1^{d-1}x_i)=s_i$ and $$s_{v}(x_1^{\alpha_1}\cdots x_n^{\alpha_n})=s_2^{\alpha_2}\cdots s_n^{\alpha_n}s_1^{\alpha_1/d-(d-1)(d-\alpha_1)/d}=\frac{s_2^{\alpha_2}\cdots s_n^{\alpha_n}}{s_1^{d-1-\alpha_1}}.$$

Since $s$ agrees with $s_v$ on monomials it follows that $s=s_v$ and thus $\ell(f^2)=s^2(f)=f(v)^2=f^2(v)$. Therefore $\ell$ indeed corresponds to points evaluation and we are done.
\end{proof}

\subsection{Kernels of Extreme Rays}Let $Q_{\ell}$ span an extreme ray of $\sq^*$ that does not correspond to point evaluation. Let $W_{\ell}$ be the kernel of $Q_{\ell}$ and let $J(\ell)$ be the ideal generated by $\wl$. By Corollary \ref{COROLLARY no common zeroes} and Lemma \ref{LEMMA rank 1 quad forms} we know that the forms in $\wl$ have no common projective zeroes real or complex, i.e. $\vc(\wl)=\emptyset$. We now investigate the kernel $\wl$ further.


Forms $p_1,\ldots,p_n \in \hd$ are said to form a \textit{sequence of parameters} if they have no common projective complex zeroes: $$\vc(p_1,\ldots,p_n)=\emptyset.$$
It follows that we can find a sequence of parameters $p_1, \ldots,p_n \in \wl$. Let $I$ be the ideal generated by $p_1,\ldots,p_n$. We will need the following theorem (special case of \cite[Theorem CB8]{EGH}):

\begin{theorem}\label{THEOREM Gorenstein}
Suppose that $p_1,\ldots,p_n \in \hd$ are a sequence of parameters and let $I$ be the ideal generated by $p_1,\ldots,p_n$ in $\mathbb{C} [x_1,\dots,x_n]$. Then $I$ is a Gorenstein ideal with socle of degree $n(d-1)$.
\end{theorem}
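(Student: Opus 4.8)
The plan is to recognize $I$ as a graded complete intersection and then invoke the two standard structural facts about such rings. First I would note that the hypothesis $\vc(p_1,\ldots,p_n)=\emptyset$ means the only common affine zero of $p_1,\ldots,p_n$ in $\co^n$ is the origin, so $V(I)=\{0\}$ and the quotient $A:=\co[x_1,\ldots,x_n]/I$ is Artinian, i.e.\ $\dim A=0$. Since $R:=\co[x_1,\ldots,x_n]$ is regular (hence Cohen--Macaulay) of dimension $n$ and $I$ is generated by exactly $n$ elements cutting $R$ down to dimension $0$, the forms $p_1,\ldots,p_n$ are a homogeneous system of parameters and therefore a regular sequence on $R$. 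Thus $A$ is a graded Artinian complete intersection generated by forms of degree $d_1=\cdots=d_n=d$.

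The second step is to extract the Gorenstein property and the canonical module from the Koszul complex. Because $p_1,\ldots,p_n$ is a regular sequence, the Koszul complex $K_\bullet(p_1,\ldots,p_n;R)$ is a minimal graded free resolution of $A$ over $R$, of length $n$, with top term $K_n=R(-d_1-\cdots-d_n)$ free of rank one. The Koszul complex is self-dual up to this twist and a reindexing of homological degree, so $\operatorname{Ext}^i_R(A,R)=0$ for $i<n$ while $\operatorname{Ext}^n_R(A,R)\cong A(d_1+\cdots+d_n)$. Using $\omega_R\cong R(-n)$ for the standard graded polynomial ring, this gives the canonical module of $A$ as $\omega_A\cong\operatorname{Ext}^n_R(A,\omega_R)\cong A(d_1+\cdots+d_n-n)=A(n(d-1))$. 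In particular $\omega_A$ is free of rank one over $A$, which is the definition of Gorenstein; alternatively one may simply cite that a complete intersection is Gorenstein and then compute the socle degree separately.

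To pin the socle in degree $n(d-1)$, I would use the Hilbert series $H_A(t)=\prod_{i=1}^n\frac{1-t^{d_i}}{1-t}=\prod_{i=1}^n(1+t+\cdots+t^{d_i-1})$, a polynomial of degree $\sum_{i=1}^n(d_i-1)=n(d-1)$ with top coefficient $1$. Hence $A_j=0$ for $j>n(d-1)$ and $\dim_{\co}A_{n(d-1)}=1$. For a graded Artinian Gorenstein algebra the relation $\omega_A\cong A(s)$ forces the socle to be one-dimensional and concentrated in degree $s$: the multiplication pairing $A_j\times A_{s-j}\to A_s\cong\co$ is perfect, so any nonzero element of $A_j$ with $j<s$ is not annihilated by the homogeneous maximal ideal and hence not in the socle, while all of $A_s$ is annihilated by it for degree reasons. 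Comparing with the Hilbert series (or reading $s=d_1+\cdots+d_n-n$ directly off the formula for $\omega_A$) gives $s=n(d-1)$, so $I$ is Gorenstein with socle of degree $n(d-1)$.

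The genuine content is the Gorenstein property itself: the Hilbert function computation is routine and does not by itself detect Gorenstein-ness, so the argument must actually use the complete intersection structure, and the self-duality of the Koszul complex (or an appeal to the classical theorem that complete intersections are Gorenstein) is precisely where that enters. The reduction to a regular sequence and the extraction of the socle degree, once Gorenstein-ness and the Hilbert series are available, are then just bookkeeping.
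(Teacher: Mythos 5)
Your proof is correct. The paper does not actually prove this statement---it is quoted as a special case of \cite[Theorem CB8]{EGH}---so there is no argument in the text to compare against; your derivation (projective emptiness of $\vc(p_1,\ldots,p_n)$ gives an Artinian quotient, hence the $p_i$ form a regular sequence in the Cohen--Macaulay ring $R$, the self-dual Koszul resolution gives $\omega_A\cong A(n(d-1))$ and hence the Gorenstein property, and the Hilbert series pins the socle in degree $\sum(d_i-1)=n(d-1)$) is exactly the standard complete-intersection argument underlying that citation. No gaps.
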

We also prove a simple but very useful characterization of kernels of forms $\ql \in \h^*$:
\begin{lemma}\label{LEMMA kernel structure}
Let $Q_{\ell}$ be a quadratic form in $\h^*$. Then $p \in \wl$ if and only if $\ell(pq)=0$ for all $q \in \hd$.
\end{lemma}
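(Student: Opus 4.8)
The plan is to reduce the statement to the polarization identity for quadratic forms. Recall that the kernel of a quadratic form $Q$ on a real vector space $V$ is, by convention, the radical of its polar bilinear form: writing $B(v,w) = \tfrac12\bigl(Q(v+w) - Q(v) - Q(w)\bigr)$ (legitimate since we are in characteristic zero), the kernel is $\{v \in V \st B(v,w) = 0 \text{ for all } w \in V\}$, equivalently the kernel of the Gram matrix of $Q$ in any basis. So the only real work is to identify the polar form of $\ql$.

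First I would record that for $f,g \in \hd$ one has $f+g \in \hd$ as well, so $\ql(f+g) = \ell\bigl((f+g)^2\bigr)$ is defined, and that $fg \in \h$, so $\ell(fg)$ makes sense. Then, using only linearity of $\ell$,
$$B_\ell(f,g) \;=\; \tfrac12\bigl(\ell((f+g)^2) - \ell(f^2) - \ell(g^2)\bigr) \;=\; \tfrac12\,\ell\bigl((f+g)^2 - f^2 - g^2\bigr) \;=\; \tfrac12\,\ell(2fg) \;=\; \ell(fg).$$
Hence the polar bilinear form of $\ql$ is simply the map $(f,g) \mapsto \ell(fg)$ on $\hd \times \hd$.

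The lemma is then immediate: $p \in \wl$ means $p$ lies in the kernel of $\ql$, i.e. $B_\ell(p,q) = 0$ for every $q \in \hd$, which by the displayed computation is exactly the condition $\ell(pq) = 0$ for all $q \in \hd$; the converse implication runs through the same equivalence backwards. I do not expect a genuine obstacle here — the single point worth flagging explicitly is that ``kernel of $\ql$'' must be read as the radical of the polar form (the kernel of the moment/catalecticant matrix), not as the zero locus $\{f : \ql(f) = 0\}$, which is in general not a linear subspace. Once that convention is fixed, the lemma is nothing more than the polarization identity.
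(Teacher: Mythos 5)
Your proof is correct and follows exactly the same route as the paper: polarize $\ql$, compute that the associated bilinear form is $B_\ell(f,g)=\ell(fg)$, and read off membership in the kernel as the radical condition. Your explicit remark that ``kernel'' means the radical of the polar form (not the zero locus of $\ql$) is a sensible clarification that the paper leaves implicit.
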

\begin{proof}
In order to investigate $\wl$ need to define the associated bilinear form $B_{\ell}$:
$$
B_{\ell}(p,q)=\frac{\ql(p+q)-\ql(p)-\ql(q)}{2} \text{   for   } p,q \in \hd.
$$
By definition of $\ql$ we have $\ql(p)=\ell(p^2).$ Therefore it follows that
\begin{equation}\label{EQUATION kernel of B_l}B_{\ell}(p,q)=\ell(pq).\end{equation}
A form $p \in \hd$ is in the kernel of $\ql$ if and only if $\bl(p,q)=0$ for all $q \in \hd$. Using \eqref{EQUATION kernel of B_l} the lemma follows.
\end{proof}


We are now in position to prove Theorems \ref{NEWLABEL 3,6} and \ref{NEWLABEL 4,4}, which we restate in a unified way:
\begin{theorem}\label{COROLLARY Dimension Kernel}
Suppose that $\ell$ is an extreme ray of $\sq^*$ in the cases $(3,6)$ and $(4,4)$ and $\ell$ does not correspond to point evaluation. Then rank of $\ql$ is equal to $\dim \hd - n$.
\end{theorem}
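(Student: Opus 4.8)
The plan is to realize $\wl=\ker\ql$ as the degree-$d$ component of a Gorenstein ideal, and then to exploit the numerical coincidence that in both cases $(3,6)$ and $(4,4)$ the socle degree $n(d-1)$ of that ideal equals $2d$, the degree of the forms in $\h$. To begin, since $\ell$ spans an extreme ray of $\sq^*$ and does not correspond to point evaluation, Corollary \ref{COROLLARY no common zeroes} together with Lemma \ref{LEMMA rank 1 quad forms} gives $\vc(\wl)=\emptyset$, so, as observed above, we may choose a sequence of parameters $p_1,\dots,p_n\in\wl$; since $\wl\subseteq\hd$ these are real forms, so $I:=(p_1,\dots,p_n)\subseteq\co[x_1,\dots,x_n]$ is defined over $\rr$. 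Put $A=\co[x]/I$.

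By Theorem \ref{THEOREM Gorenstein}, $A$ is Artinian Gorenstein with socle degree $n(d-1)$, which is $3(3-1)=6=2d$ in case $(3,6)$ and $4(2-1)=4=2d$ in case $(4,4)$. Hence $A_k=0$ for $k>2d$ and $\dim A_{2d}=1$, so $I_{2d}$ has codimension $1$ in $\h$, over $\rr$ as well as over $\co$. Now each $p_i$ lies in $\wl$, so Lemma \ref{LEMMA kernel structure} gives $\ell(p_iq)=0$ for every $q\in\hd$; since $I_{2d}=\sum_i p_i\hd$, this says precisely that $\ell$ vanishes on $I_{2d}$. As $\ql$ spans a ray, $\ql\neq 0$, hence $\ell\neq 0$, and therefore $\ker\ell=I_{2d}$.

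Applying Lemma \ref{LEMMA kernel structure} once more, $p\in\wl$ if and only if $p\cdot\hd\subseteq\ker\ell=I_{2d}$, i.e.\ if and only if the class $\bar p\in A_d$ is annihilated by all of $A_d$ under the multiplication map $A_d\times A_d\to A_{2d}\cong\co$. Because $d+d$ equals the socle degree $2d$, the Gorenstein property makes this pairing perfect, so $\bar p=0$, that is, $p\in I_d$; the reverse inclusion $I_d\subseteq\wl$ is immediate. Hence $\wl=I_d=\operatorname{span}(p_1,\dots,p_n)$, which has dimension $n$ since a regular sequence is linearly independent, and consequently $\operatorname{rank}\ql=\dim\hd-\dim\wl=\dim\hd-n$.

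The routine ingredients are the extraction of the regular sequence and the dimension count; the substance lies in the second and third paragraphs and in the perfect-pairing step. The equality ``socle degree $=2d$'' is exactly what forces $\ell$ to be the essentially unique functional annihilating $I_{2d}$, and it is the perfection of the \emph{middle}-degree pairing $A_d\times A_d\to A_{2d}$ that converts the condition $p\in\ker\ql$ into the purely algebraic condition $p\in I_d$; this is the step I expect to be the crux. Two minor points keep everything over $\rr$: a general real $n$-tuple of elements of $\wl$ is already a sequence of parameters, since the tuples that fail to be regular form a proper Zariski-closed subset of $\wl\times\cdots\times\wl$ defined over $\rr$; and a perfect $\co$-bilinear pairing restricts to a perfect $\rr$-bilinear pairing on real parts, the nondegeneracy being insensitive to base field extension.
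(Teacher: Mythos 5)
Your proof is correct and follows essentially the same route as the paper's: extract a sequence of parameters $p_1,\dots,p_n\in\wl$, invoke the Gorenstein theorem to get socle degree $n(d-1)=2d$, use Lemma \ref{LEMMA kernel structure} to see that $\ell$ vanishes on the degree-$2d$ part of the ideal, and conclude $\wl=I_d$. The only cosmetic difference is that you make explicit the perfect pairing $A_d\times A_d\to A_{2d}$, where the paper packages the same fact as ``$J(\ell)_{2d}\supsetneq I_{2d}$ forces $J(\ell)_{2d}=\h$ and hence $\ell=0$.''
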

\begin{proof}

Let $p_1, \ldots,p_n$ be a sequence of parameters in $\wl$ and let $I$ be the ideal generated by $p_1,\ldots,p_n$. We claim that $\wl=I_d$, or in other words, linear combinations of $p_1,\ldots,p_n$ generate $\wl$. We note that this claim implies the desired Corollary, since it shows that the kernel of $\ql$ has dimension exactly $n$.

By Theorem \ref{THEOREM Gorenstein} we know that the socle of $I$ has degree $n(d-1)=2d$ in the cases $(3,6)$ and $(4,4)$. Suppose that $\wl$ is strictly larger than $I_d$. The ideal $I$ is Gorenstein with socle of degree $2d$, and hence $J(\ell)_{2d}$ is strictly larger than $I_{2d}$, which means that $J(\ell)_{2d}=\h$.

It follows from Lemma \ref{LEMMA kernel structure} that $$\ell(f)=0 \hspace{5mm} \text{for all} \hspace{5mm} f \in J(\ell)_{2d}.$$ Therefore $\ell$ is the zero linear functional, which is a contradiction.
\end{proof}

Given an extreme ray $\ql$ of $\sq^*$ that does not correspond to point evaluation we can pass to its kernel $\wl$ and in the cases $(3,6)$ and $(4,4)$ the kernel $\wl$ has dimension exactly $n$ and further $\vc(\wl)=\emptyset$. It follows from Theorem \ref{THEOREM Gorenstein} that an $n$-dimensional subspace $W$ with $\vc(W)=\emptyset$ uniquely determines (up to a constant multiple) the linear functional $\ell$ such that the kernel of $\ql$ is $W$. The linear functional $\ell$ is the unique linear functional vanishing on the degree $2d$ part $\langle W \rangle_{2d}$ of the ideal generated by $W$. This correspondence is a special case of the \textit{global residue map} \cite[\S 1.6]{CD}.

Therefore instead of directly studying the extreme rays $\ell$ of $\sq^*$ we can look instead for $n$-dimensional subspaces $W$ of $\hd$, with $\mathcal{V}_{\co}(W)=\emptyset$, whose corresponding linear functionals are extreme rays of $\sq^*$. The linear functionals $\ell \in \sq^*$ have the defining property of being nonnegative on squares. In order to see when a subspace $W$ of $\hd$ gives rise to an extreme ray of $\sq^*$ we need to get a handle on the linear functional $\ell \in \h^*$ that $W$ defines. We do this by passing to point evaluations. We need the following general Lemma, which allows us to extract a transverse zero-dimensional intersection from forms in $W$.

\begin{lemma}\label{NEWLEMMA Bertini}
Suppose that $p_1,\ldots,p_n \in \hd$ are a sequence of parameters. Then there exist $f_1, \ldots,f_{n-1}$ in the real linear span of $p_i$ such that the forms $f_1, \ldots, f_{n-1}$ intersect transversely in $d^{n-1}$ (possibly complex) points.
\end{lemma}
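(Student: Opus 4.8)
The plan is to use a Bertini-type argument on the rational map given by the linear system spanned by $p_1, \ldots, p_n$. Since $p_1, \ldots, p_n$ are a sequence of parameters, $\mathcal{V}_{\co}(p_1,\ldots,p_n) = \emptyset$, so the linear system they span is base-point-free on $\mathbb{P}^{n-1}$ and defines a morphism $\phi\colon \mathbb{P}^{n-1} \to \mathbb{P}^{n-1}$. The generic fiber of $\phi$ is a zero-dimensional scheme, and by B\'ezout its degree is $d^{n-1}$ counted with multiplicity; a general fiber is in fact reduced (this is the content of a Bertini theorem over an algebraically closed field, since the base field $\mathbb{C}$ has characteristic zero, so generic smoothness applies to $\phi$). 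A general fiber of $\phi$ is cut out by $n-1$ general members of the linear system, i.e. by forms $f_1, \ldots, f_{n-1}$ lying in the complex linear span of the $p_i$; choosing the fiber over a general point of $\mathbb{P}^{n-1}$ gives $f_1, \ldots, f_{n-1}$ intersecting transversely in $d^{n-1}$ distinct points.

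The one subtlety is that we need the $f_j$ to lie in the \emph{real} linear span of the $p_i$, whereas a priori the Bertini argument only produces them in the complex span. This is handled by noting that the set of $(n-1)$-tuples $(f_1,\ldots,f_{n-1})$ in the complex span whose common zero locus is a transverse (reduced, zero-dimensional) set of $d^{n-1}$ points is a Zariski-open subset of $(\co^n)^{n-1}$ (identifying the span with $\co^n$ via the basis $p_1,\ldots,p_n$), and it is nonempty by the previous paragraph. A nonempty Zariski-open subset of $(\co^n)^{n-1}$ cannot be disjoint from $(\rr^n)^{n-1}$, since the latter is Zariski-dense in the former. Hence we may pick $f_1, \ldots, f_{n-1}$ with real coefficients, i.e. in the real linear span of $p_1, \ldots, p_n$, still intersecting transversely in $d^{n-1}$ points.

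I expect the main obstacle to be pinning down the transversality/reducedness claim cleanly: one must argue that a general codimension-$(n-1)$ linear section of the base-point-free linear system is a reduced zero-dimensional scheme of length $d^{n-1}$. This follows from generic smoothness for the morphism $\phi$ over $\co$ (characteristic zero), applied to a general point in the image, together with the fact that $\phi$ is dominant (its image has dimension $n-1$ because the fibers are finite). Once one has that a general fiber is reduced of the expected length $d^{n-1}$, the statement is immediate, and the descent to real coefficients is the density argument above. Everything else — base-point-freeness, the B\'ezout bound — is standard and follows directly from the sequence-of-parameters hypothesis.
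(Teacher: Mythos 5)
Your proof is correct, and its overall architecture matches the paper's: first establish transversality for some choice of $f_1,\ldots,f_{n-1}$ in the \emph{complex} span, then descend to the real span by observing that transversality is a nonempty Zariski-open condition on the coefficients and that the real coefficient tuples are Zariski-dense (the paper phrases this as ``a polynomial vanishing on all real points is identically zero,'' which is the same argument). The only difference is in how the complex step is organized: the paper applies Bertini iteratively, choosing a smooth $V_1=\mathcal{V}(f_1)$, then a general member of a complementary subspace cutting $V_1$ in a smooth variety of dimension $n-2$, and so on down to dimension zero; you instead use that the base-point-free linear system defines a finite morphism $\phi\colon\mathbb{CP}^{n-1}\to\mathbb{CP}^{n-1}$ and invoke generic smoothness in characteristic zero to get a reduced general fiber, which is cut out by the $n-1$ forms $p_i-a_ip_n$ from the span. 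Both routes are standard; yours packages the $n-1$ choices into a single application of generic smoothness at the cost of two small points worth spelling out (that the fibers of $\phi$ are finite, and that the scheme cut out by $p_i-a_ip_n$ coincides with the fiber --- both follow from $\vc(p_1,\ldots,p_n)=\emptyset$), while the paper's iteration avoids discussing the morphism but must re-invoke Bertini at each stage.
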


\begin{proof}
Let $W$ be the linear span of $p_1, \ldots, p_n$ with complex coefficients. We begin by showing that there exist linear combinations $f_1,\ldots, f_{n-1} \in W$ such that $f_1, \ldots f_{n-1}$ intersect transversely in $\mathbb{CP}^{n-1}$.

By Bertini's theorem a general form in $W$ is smooth. Let $f_1$ be such a form. Let $V_1$ be the smooth variety defined by $f_1$ and let $W_1$ be a subspace of $W$ complementary to $f_1$. Then $W_1$ defines a linear system of divisors on $V_1$ and by Bertini's Theorem the intersection of $V_1$ with a general element of $W_1$ is a smooth variety of dimension $n-2$. Let $f_2$ be such an element of $W_1$. Now we can let $V_2$ be the smooth variety defined by $f_1$ and $f_2$, let $W_2$ be the complementary subspace to $f_1$ and $f_2$ and repeatedly apply Bertini's Theorem until we get a $0$-dimensional smooth intersection. Hence the forms $f_1,\dots,f_{n-1}$ we constructed intersect transversely.

Now we argue that there exist \textit{real} linear combinations $f_1,\ldots, f_{n-1}$ which intersect transversely. Suppose not and let $f_i=\sum_{j=1}^{n}\alpha_{ij}p_j$. Then for all $\alpha_{ij} \in \mathbb{R}$ the forms $f_i$ do not intersect transversely. This is an algebraic conditions on the coefficients $\alpha_{ij}$, given by vanishing of some polynomials in the variables $\alpha_{ij}$. However, if a polynomial vanishes on all real points then it must be identically zero. Therefore, no complex linear combinations of $p_i$ intersect transversely, which is a contradiction.
\end{proof}

Now suppose that we have an $n$-dimensional subspace $W$ of $\hd$ with $\vc(W)=\emptyset$ and we locate $f_1,\dots,f_{n-1} \in W$ that intersect transversely. Let $s=d^{n-1}$ and let $\Gamma$ be the complex projective variety defined by $f_1,\dots,f_{n-1}$:$$\Gamma=\mathcal{V}_{\co}(f_1,\ldots,f_{n-1})=\{\gamma_1, \dots , \gamma_s\}\subset \co\mathbb{P}^{n-1}$$. Let $S=\{z_1,\ldots,z_s\}$ be a set of affine representatives for projective points $\gamma_i$. The functional $\ell$ determined by $W$ is the unique linear functional vanishing on $\langle W \rangle_{2d}$. In particular $\ell$ vanishes on $\langle f_1,\dots,f_{n-1} \rangle_{2d}$. Since the forms $f_1,\dots,f_{n-1}$ intersect transversely, the ideal generated by $f_i$ is radical \cite{Harris Book}. It follows therefore that $\ell$ can be expressed as a linear combination of evaluations at points $v_i$:
$$\ell=\sum_{i=1}^{s} \mu_i\ell_{v_i};\qquad \ell(p)=\sum_{i=1}^{s} \mu_ip(v_i), \quad p \in \h.$$

The coefficients $\mu_i$ are determined uniquely from any form $f_n$ so that $f_1, \dots, f_n$ form a basis of $W$. In order to see how this occurs we need to introduce Cayley-Bacharach relations.
\subsection{Cayley-Bacharach Relations}

We now recall the Cayley-Bacharach theorem as applicable to ternary cubics and quaternary quadrics \cite[Theorem CB6]{EGH}:
\begin{lemma} \label{NEWLEMMA CB}
For the cases $(3,6)$ and $(4,4)$ let $f_1,\dots,f_{n-1} \in \hd$ be forms intersecting transversely in $s=d^{n-1}$ complex projective points $\gamma_1, \ldots, \gamma_s$. Let $z_1, \ldots, z_s$ be affine representatives of the projective points $\gamma_i$. Then there is a unique linear relation on the values of any form in $\hd$ on $z_i$:
$$u_1p(z_1)+\ldots+u_sp(z_s)=0 \quad \text{for all}\quad p \in \hd,$$
with nonzero $u_i \in \mathbb{C}.$
\end{lemma}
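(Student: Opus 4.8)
The plan is to reduce the existence and uniqueness of the relation to a rank computation for the evaluation map, and then to read off the non-vanishing of the coefficients from the Cayley--Bacharach property in degree $d$. Write $s=d^{n-1}$, let $X=\vc(f_1,\dots,f_{n-1})=\{\gamma_1,\dots,\gamma_s\}\subset\mathbb{CP}^{n-1}$, and let $\phi\colon\hd\to\mathbb{C}^s$ be the evaluation map $\phi(p)=(p(z_1),\dots,p(z_s))$. A relation $u_1p(z_1)+\dots+u_sp(z_s)=0$ valid for all $p\in\hd$ is exactly a linear functional on $\mathbb{C}^s$ that vanishes on $\operatorname{im}\phi$, so such relations form a vector space of dimension $s-\operatorname{rank}\phi$. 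Hence ``there is a unique relation'' means $\operatorname{rank}\phi=s-1$, and ``all $u_i\neq 0$'' means no coordinate functional of a non-trivial relation can vanish.

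First I would compute $\operatorname{rank}\phi$. Since $f_1,\dots,f_{n-1}$ meet transversely they are linearly independent (otherwise their common zero locus would be positive-dimensional), the scheme $X$ is reduced, and, being a complete intersection, the ideal $I=(f_1,\dots,f_{n-1})$ is saturated and radical \cite{Harris Book}; thus $I$ is the full homogeneous ideal of $X$. Therefore $\ker\phi=I_d=\langle f_1,\dots,f_{n-1}\rangle$ has dimension $n-1$, so $\operatorname{rank}\phi=\dim\hd-(n-1)$. A direct check, using $\dim H_{3,3}=\dim H_{4,2}=10$, $s=9$ resp.\ $8$, and $n-1=2$ resp.\ $3$, gives $\dim\hd-(n-1)=s-1$ in both cases (equivalently, the Hilbert function of the complete intersection satisfies $\dim(\mathbb{C}[x_1,\dots,x_n]/I)_d=s-1$ precisely in the cases $(3,6)$ and $(4,4)$). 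Hence the space of relations is one-dimensional, which yields existence and uniqueness up to scaling.

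For the non-vanishing, fix $k$ and let $\phi_k\colon\hd\to\mathbb{C}^{s-1}$ be evaluation at the points $z_i$ with $i\neq k$. I claim $\phi_k$ is surjective. Its kernel consists of the degree-$d$ forms vanishing on $X\setminus\{\gamma_k\}$; by the Cayley--Bacharach theorem \cite{EGH} the complete intersection $X$ has the Cayley--Bacharach property in degree $(n-1)(d-1)-1$, which equals $d$ in both of our cases, so any form of degree $d$ vanishing on all but one point of $X$ vanishes on all of $X$. Thus $\ker\phi_k=\ker\phi$, whence $\operatorname{rank}\phi_k=\operatorname{rank}\phi=s-1$ and $\phi_k$ is onto $\mathbb{C}^{s-1}$. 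Consequently there is no non-zero linear relation among the values $p(z_i)$, $i\neq k$, so the unique relation of the previous paragraph must have $u_k\neq 0$; since $k$ was arbitrary, every $u_i$ is non-zero.

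The dimension count giving uniqueness is routine, so the genuine obstacle is the non-vanishing: it is exactly the statement that every colength-one subscheme of $X$ imposes independent conditions on $\hd$, which fails for a general configuration of $s$ points and holds here only because $X$ is a complete intersection and $d$ happens to equal the Cayley--Bacharach degree $(n-1)(d-1)-1$ --- the numerical coincidence that singles out $(3,6)$ and $(4,4)$. I would therefore organize the write-up so that the only nontrivial inputs are clearly isolated: the radicality of $I$ (from transversality of the intersection) and the Cayley--Bacharach theorem of \cite{EGH} applied in degree $d$.
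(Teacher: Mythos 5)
Your proof is correct. Note, however, that the paper does not prove this lemma at all: it is stated as a direct quotation of \cite[Theorem CB6]{EGH}, so there is no internal argument to compare against --- you have supplied a derivation that the paper delegates entirely to the reference. Your dimension count is sound: transversality gives a reduced complete intersection, whose ideal $I=(f_1,\dots,f_{n-1})$ is saturated and radical, so $\ker\phi=I_d=\operatorname{span}(f_1,\dots,f_{n-1})$ has dimension $n-1$, and the numerology $\dim\hd-(n-1)=s-1$ (namely $10-2=8$ and $10-3=7$) yields a one-dimensional space of relations. Your treatment of the non-vanishing is also the right one, and you correctly identify it as the genuinely non-trivial point: the socle degree $(n-1)(d-1)-1=\sum d_i-(n-1)-1$ equals $d$ exactly in the cases $(3,6)$ and $(4,4)$, so the classical Cayley--Bacharach statement (a form of degree $d$ through all but one point of the complete intersection passes through the last) gives $\ker\phi_k=\ker\phi$ and hence surjectivity of $\phi_k$, forcing $u_k\neq 0$ for every $k$. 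The only residual dependence on \cite{EGH} in your write-up is this last step, which is unavoidable; everything else (linear independence of the $f_i$, radicality, the Hilbert function computation) is self-contained and correct.
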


As we will see later evaluation on transverse intersections will be enough to distinguish nonnegative forms from sums of squares. Before we proceed with that we would like to show explicitly the geometry of values on transverse intersections, when all of the intersections points are real.
\section{Cones of Point Evaluations}\label{SECTION GEOMETRY OF EVALUATIONS}
Since the geometry of the cases $(3,6)$ and $(4,4)$ is very similar we will give a unified presentation.
For these cases let $f_1,\dots, f_{n-1}$ be forms in $\hd$ intersecting transversely in $s=d^{n-1}$ real projective points $\gamma_1,\dots,\gamma_s$. Let $v_1, \ldots, v_s \in \mathbb{R}^n$ be arbitrary nonzero affine representatives for $\gamma_1,\ldots,\gamma_s$ with $v_i$ corresponding to $\gamma_i$. Then by Lemma \ref{NEWLEMMA CB} there exists a unique linear relation $$u_1p(z_1)+\ldots+u_sp(z_s)=0 \quad \text{for all}\quad p \in \hd$$ and since we have all points $v_i \in \mathbb{R}^n$ coming from intersection of real forms, all the coefficients $u_i$ must be real.


We first look at general real zero-dimensional intersections. Suppose that $\Gamma =\{\gamma_1,\ldots,\gamma_m\}$ are real projective points that can be given as the complete set of common real zeroes of some forms $f_1,\ldots,f_k \in \hd$: $$\Gamma=\mathcal{V}_{\rr}(f_1,\ldots,f_k).$$

Let $S=\{v_1,\ldots,v_m\} \subset \rn$ be a set of affine representatives for $\gamma_i$. Let $E$ be the evaluation map that sends $p \in \h$  to its values on the points $v_i$:
$$E:\h \longrightarrow \rmm, \hspace{8mm} E(p)=(p(v_1),\ldots,p(v_m)).$$

Note that $E$ is defined on forms of degree $2d$. Let $P'$ and $Sq'$ be the images of $\p$ and $\sq$ under $E$ respectively and let $H'$ be the image of $\h$. We observe that $H'$ does not have to be all of $\rr^m$, since the values of forms in $\h$ on points $v_i$ may be linearly dependent. Since we are evaluating nonnegative forms it follows that $P'$ lies inside the intersection of $H'$ and $\rr^m_+$: $$P' \subseteq H' \cap \rr^m_+.$$
The following theorem shows that this inclusion is almost an equality.
\begin{theorem}\label{THM Values of Nonegative Forms}
Let $\rr^m_{++}$ be the positive orthant of $\rr^m$. The intersection of $H'$ with the positive orthant is contained in $P'$: $$H' \cap \rr^m_{++} \subset P'.$$
\end{theorem}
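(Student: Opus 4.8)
The plan is to take an arbitrary point $w=(w_1,\dots,w_m)\in H'\cap\rr^m_{++}$, choose any preimage $g\in\h$ with $E(g)=w$, and correct $g$ to a nonnegative form by adding a large multiple of a single auxiliary form that vanishes exactly at the $v_i$ and is strictly positive everywhere else on the sphere. The auxiliary form is $\sigma=f_1^2+\dots+f_k^2\in\h$. It is a sum of squares, hence nonnegative; it vanishes at each $v_i$ because every $f_j$ does; and, crucially, on the unit sphere it vanishes \emph{only} at the points $\pm v_i/\norm{v_i}$: if $\sigma(x)=0$ then $f_1(x)=\dots=f_k(x)=0$, so the line through $x$ lies in $\vr(f_1,\dots,f_k)=\Gamma=\{\gamma_1,\dots,\gamma_m\}$ by hypothesis. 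This is the same perturbation device used just after Corollary \ref{COROLLARY no common zeroes} to manufacture psd forms that are not sos.

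The rest is a compactness estimate on $\sph$. Since $g$ has even degree $2d$ we have $g(\pm v_i/\norm{v_i})=w_i/\norm{v_i}^{2d}>0$, so by continuity $g$ is strictly positive on an open neighborhood $U\subset\sph$ of the finite set $V=\{\pm v_i/\norm{v_i}:1\le i\le m\}$. On the compact complement $\sph\setminus U$ the form $\sigma$ is continuous and has no zeros (all its zeros on the sphere lie in $V\subset U$), hence $\sigma\ge\delta$ there for some $\delta>0$; also $g\ge -M$ on $\sph$ for some $M\ge 0$. Picking $\lambda\ge M/\delta$ and setting $p=g+\lambda\sigma$, we get $p>0$ on $U$ (where $g>0$ and $\sigma\ge 0$) and $p\ge -M+\lambda\delta\ge 0$ on $\sph\setminus U$, so $p\ge 0$ on $\sph$ and, by homogeneity, $p\in\p$. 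Finally $p(v_i)=g(v_i)+\lambda\sigma(v_i)=w_i+0=w_i$, so $E(p)=w$ and therefore $w\in P'$, which proves $H'\cap\rr^m_{++}\subset P'$.

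There is no real obstacle here; the only point deserving attention is the identification of the real zero locus of $\sigma$ on the sphere with $V$, which is exactly where the hypothesis $\Gamma=\vr(f_1,\dots,f_k)$ (completeness of the real zero set) is used — without it $\sigma$ could have extra zeros on $\sph\setminus U$ and the argument would break. Once $\sigma$ is in hand, choosing $\lambda$ large is routine, and the evaluation at the $v_i$ is unaffected because $\sigma(v_i)=0$.
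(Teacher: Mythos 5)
Your proposal is correct and follows essentially the same route as the paper: add a large multiple of $f_1^2+\dots+f_k^2$ to a preimage of the given point and run a compactness estimate on the sphere, splitting into a neighborhood of the zero set and its complement. In fact you are slightly more careful than the paper's write-up in noting that the zero locus on $\sph$ consists of the antipodal pairs $\pm v_i/\norm{v_i}$ and that $g$ is positive at both points of each pair by evenness of the degree.
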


\begin{proof}
Let $s=(s_1,\ldots, s_m)$ be a point in the intersection of $H'$ and $\rr^m_{++}$. Since $s \in H$ there exists a form $p \in \h$ such that $p(v_i)=s_i$. Let $g=f_1^2+\ldots+f_k^2$. We claim that for large enough $\lambda \in \rr$ the form $\bar{p}=p+\lambda g$ will be nonnegative, and since each $f_i$ is zero on $S$ we will also have $E\left(\bar{p}\right)=s$.

By homogeneity of $\bar{p}$ it suffices to show that it is nonnegative on the unit sphere $\sph$. Furthermore, we may assume that the evaluation points $v_i$ lie on the unit sphere. Since we are dealing with forms, evaluation on the points outside of the unit sphere amounts to rescaling of the values on $\sph$.

Let $B_{\epsilon}(S)$ be the open epsilon neighborhood of $S$ in the unit sphere $\sph$. Since $p(v_i)>0$ for all $i$, it follows that for sufficiently small $\epsilon$ the form $p$ is strictly positive on $B_{\epsilon}(S)$: $$p(x) >0 \quad \text{for all} \quad x \in B_{\epsilon}(S).$$
\noindent The complement of $B_{\epsilon}(S)$ in $\sph$ is compact, and therefore we can let $m_1$ be the minimum of $g$ and $m_2$ be the minimum of $p$ on $\sph \setminus B_{\epsilon}(S)$. If $m_2\geq 0$ then $p$ itself was nonnegative and we are done. Therefore, we may assume $m_2 <0$. We also note that since $g$ vanishes on $S$ only, it follows that $m_1$ is strictly positive.

Now let $\lambda \geq -\frac{m_2}{m_1}$. The form $\bar{p}=p+\lambda g$ is positive on $B_{\epsilon}(S)$. By construction of $B_{\epsilon}(S)$ we also see that the minimum of $\bar{p}$ on the complement of $B_{\epsilon}(S)$ is at least $0$. Therefore $\bar{p}$ is nonnegative on the unit sphere $\sph$ and we are done.
\end{proof}



We obtain the following corollary for the cases $(3,6)$ and $(4,4)$:
\begin{cor}
Suppose that $\Gamma$ comes from transverse intersection of two ternary cubics (resp. 3 quaternary quadrics) then $\mathbb{R}^9_{++} \subset P'$ (resp. $\mathbb{R}^8_{++} \subset P'$).
\end{cor}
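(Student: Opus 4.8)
The plan is to deduce the corollary directly from Theorem \ref{THM Values of Nonegative Forms} once we know that the evaluation map $E$ is onto in these two cases. Since $H' = E(\h)$ is a linear subspace of $\rmm$ and the orthant $\rmm_{++}$ spans $\rmm$, the inclusion $H' \cap \rmm_{++} \subset P'$ of Theorem \ref{THM Values of Nonegative Forms} forces $\rmm_{++} \subset P'$ as soon as $H' = \rmm$. So everything comes down to showing that the $m = d^{n-1}$ intersection points $v_i$ impose independent conditions on forms of degree $2d$; equivalently, that the space of degree-$2d$ forms vanishing on all $v_i$ has dimension exactly $\dim \h - m$.

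To establish this I would use the complete intersection structure of $\Gamma$. Transversality implies that $f_1,\dots,f_{n-1}$ form a regular sequence in $\co[x_1,\dots,x_n]$ (they cut out a reduced $0$-dimensional subscheme of $\mathbb{P}^{n-1}$) and, as already noted in the excerpt, that the ideal $(f_1,\dots,f_{n-1})$ is radical, hence equal to the vanishing ideal $I_\Gamma$ of the points. Therefore the coordinate ring $A = \co[x_1,\dots,x_n]/(f_1,\dots,f_{n-1})$ has Hilbert series
$$\frac{(1-t^{d})^{\,n-1}}{(1-t)^{\,n}}=\frac{(1+t+\cdots+t^{d-1})^{\,n-1}}{1-t},$$
so writing $g(t) = (1+t+\cdots+t^{d-1})^{n-1}$ we get $\dim A_k = \sum_{0\le j\le k}[t^j]g(t)$. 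As $g$ is a polynomial of degree $(n-1)(d-1)$ with $g(1) = d^{\,n-1} = m$, this gives $\dim A_k = m$ for every $k \ge (n-1)(d-1)$. In the case $(3,6)$ one has $(n-1)(d-1) = 4 < 6 = 2d$, and in the case $(4,4)$ one has $(n-1)(d-1) = 3 < 4 = 2d$; so in both cases $\dim A_{2d} = m$, which is exactly the statement that the points impose $m$ independent conditions on degree-$2d$ forms, i.e. $\dim (I_\Gamma)_{2d} = \dim \h - m$. Since the $f_i$ and $v_i$ are real, the matrix of $E$ in real bases is a real matrix whose rank is unchanged by extension of scalars to $\co$, so the real map $E : \h \to \rmm$ is onto as well; thus $H' = \rmm$, and Theorem \ref{THM Values of Nonegative Forms} yields $\rmm_{++} \subset P'$, with $m = 9$ for two ternary cubics and $m = 8$ for three quaternary quadrics.

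The one genuine point is the surjectivity of $E$, and this is where the two Hilbert cases are special: the stabilization degree $(n-1)(d-1)$ of the coordinate ring of a transverse intersection of $n-1$ hypersurfaces of degree $d$ in $\mathbb{P}^{n-1}$ has to be at most $2d$, which holds, in fact strictly, precisely for $(n,d) = (3,3)$ and $(n,d) = (4,2)$. One should also confirm that the Hilbert series formula is being applied legitimately, but transversality is stronger than the regular-sequence hypothesis it needs, so this is automatic. The same count could instead be phrased through the Cayley-Bacharach theorem: a transverse complete intersection of $n-1$ hypersurfaces of degree $d$ fails to impose independent conditions only in degrees below $(n-1)(d-1)$, hence certainly not in degree $2d$.
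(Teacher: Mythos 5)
Your proof is correct and follows essentially the same route as the paper: both arguments reduce the corollary to the surjectivity of the evaluation map $E$ (so that $H'=\mathbb{R}^m$ and hence $H'\cap\mathbb{R}^m_{++}=\mathbb{R}^m_{++}$) and then invoke Theorem \ref{THM Values of Nonegative Forms}. The only difference is that where the paper dispatches the independence-of-conditions claim by citing it as an easy special case of the Cayley--Bacharach theorem \cite{EGH}, you derive it directly and correctly from the Hilbert series $(1-t^{d})^{n-1}/(1-t)^{n}$ of the transverse complete intersection together with the check that $(n-1)(d-1)<2d$ in both cases, which is a valid self-contained substitute for that citation.
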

\begin{proof}
We need to show that in our two cases the cone $P'$ is full dimensional. This happens if and only if the values on the points $v_i$ are linearly independent for forms in $H_{3,6}$ (resp. $H_{4,4}$). This is an easy special case of Cayley-Bacharach Theorem \cite[Theorem CB6]{EGH}.
\end{proof}

We now show how the presence of a Cayley-Bacharach relation impacts the values attainable by sums of squares. Suppose now that the points $v_1,\ldots,v_m \in \rn$  are such that there exists a unique Cayley-Bacharach relation satisfied by all forms $p \in \hd$: $u_1p(v_1)+\ldots+u_mp(v_m)=0$ with nonzero coefficients $u_i \in \mathbb{R}$.

\indent We first describe $Sq'$ if the affine representatives are chosen so that the coefficients in the Cayley-Bacharach relation have absolute value 1. Let $w_i=|u_i|^{1/d}v_i$. Then $p(w_i)=|u_i|p(v_i)$ for all $f \in \hd$. Thus we see that the values of forms in $\hd$ on $w_i$ satisfy a unique relation $a_1f(w_1)+\ldots+a_mf(w_m)=0$ with $a_i=\pm 1$. Now redefine the map $E$ using this particular set of affine representatives $w_i$ and let $Sq'$ be the image of $\sq$ under $E$.

Let $T_m$ be the subset of the nonnegative orthant $\mathbb{R}^m_+$ defined by the following $m$ inequalities: $$T_m=\left\{(x_1,\ldots,x_m) \in \mathbb{R}^m_+ \hspace{5mm} \Big| \hspace{5mm} \sum_{i=1}^m\sqrt{x_i} \geq 2\sqrt{x_k} \hspace{3mm}\text{for all} \hspace{3mm} k \right\}.$$
\begin{lemma}\label{TTn}
The set $T_m$ is a closed convex cone. Moreover, $T_m$ is the convex hull of the points $x=(x_1,\ldots,x_m) \in \rmm_+$ where $\sum_{i=1}^m\sqrt{x_i} = 2\sqrt{x_k}$ for some $k$.
\end{lemma}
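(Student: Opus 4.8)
The plan is to analyze $T_m$ by a change of variables $y_i = \sqrt{x_i}$, which turns the defining inequalities into linear ones, and then translate the resulting facts back. Set $y = (y_1,\dots,y_m)$ with $y_i \geq 0$, so that $x \in T_m$ corresponds to $\sum_i y_i \geq 2 y_k$ for all $k$. Call the set of such $y$ in the nonnegative orthant $U_m$; it is clearly a closed polyhedral cone (a finite intersection of half-spaces through the origin). The first step is to observe that the map $\sigma : y \mapsto (y_1^2,\dots,y_m^2)$ carries $U_m$ onto $T_m$, so $T_m$ is closed. For convexity I cannot simply push convexity through $\sigma$, since $\sigma$ is not linear; instead I would show directly that if $x, x'$ both satisfy $\sum_i \sqrt{x_i} \geq 2\sqrt{x_k}$ then so does $\lambda x + (1-\lambda)x'$. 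The key inequality is concavity of $t \mapsto \sqrt{t}$, giving $\sqrt{\lambda x_i + (1-\lambda)x_i'} \geq \lambda\sqrt{x_i} + (1-\lambda)\sqrt{x_i'}$ — wait, this goes the wrong way for the left side. So the correct route is the reverse: one wants an \emph{upper} bound on $\sqrt{x_k}$ and a \emph{lower} bound on $\sum_i \sqrt{x_i}$.

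Because of that sign issue, I would instead prove convexity by exhibiting $T_m$ as a union of rays and taking the convex hull — i.e., prove the "moreover" clause first and deduce convexity from it. Let $B$ be the set of boundary points where $\sum_{i=1}^m \sqrt{x_i} = 2\sqrt{x_k}$ for some $k$, together with the coordinate rays (which arise as the $m\to$ degenerate case, or are already in $B$). Concretely, fix $k$ and consider points supported so that $\sqrt{x_k} = \sum_{i\neq k}\sqrt{x_i}$; squaring, these trace out a cone $C_k$. I claim $T_m = \operatorname{conv}\!\big(\bigcup_k C_k\big)$ and that this convex hull is closed. For the inclusion $\operatorname{conv}(\bigcup_k C_k) \subseteq T_m$: each $C_k \subseteq T_m$ since on $C_k$ the tightest constraint is the $k$-th one and it holds with equality while the others are slack (as $2\sqrt{x_j} \leq \sum_i \sqrt{x_i}$ reduces to $\sqrt{x_j} \leq \sum_{i\neq j}\sqrt{x_i}$, true because $\sqrt{x_k} = \sum_{i\neq k}\sqrt{x_i} \geq \sqrt{x_j}$ forces... actually one checks this arithmetically); then $T_m$'s convexity, once established, gives the convex hull containment. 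Conversely, given $x \in T_m$, I would produce a representation of $\sqrt{x}$-vector as a convex-type combination in the $y$-coordinates: in $U_m$, every point is a nonnegative combination of the extreme rays of that polyhedral cone, and the extreme rays of $U_m$ are exactly the rays where $y_k = \sum_{i\neq k} y_i$ holds (the "folded simplex" structure) plus the coordinate rays; mapping back by $\sigma$ lands in $\bigcup_k C_k$. The bookkeeping that $\sigma$ of a nonnegative combination of extreme rays of $U_m$ lies in the convex hull of the $\sigma$-images is the one genuinely fiddly point, handled by noting $\sigma$ restricted to each $2$-dimensional face behaves controllably.

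So the cleanest logical order is: (1) change variables to $y_i=\sqrt{x_i}$ and identify $U_m$ as a closed polyhedral cone; (2) determine the extreme rays of $U_m$ explicitly — the coordinate axes and the rays $\{y_k = \sum_{i\neq k} y_i\}$ intersected with suitable faces; (3) show $\sigma(U_m) = T_m$, giving closedness of $T_m$; (4) prove convexity of $T_m$ directly from the $y$-description, using that $\sigma$ maps the polyhedral cone $U_m$ onto $T_m$ and that $T_m$ is a union of images of convex faces whose union of images is shown closed; (5) read off that $T_m$ equals the convex hull of $\{x : \sum_i\sqrt{x_i} = 2\sqrt{x_k} \text{ for some } k\}$ by matching with the extreme rays from step (2). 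The main obstacle I anticipate is step (4)/(2): because $\sigma$ is quadratic, convexity of $T_m$ is not automatic and one must be careful that the "cut corners" of the simplex are genuinely convex — the honest verification is that along any segment in $T_m$, writing endpoints in $y$-coordinates and using the constraint $2\sqrt{x_k} \leq \sum_i \sqrt{x_i}$ together with the reverse triangle/concavity estimates in the right direction closes the argument. I would expect to spend most of the proof nailing down that the $m$ quadratic inequalities $\sum_i \sqrt{x_i} \geq 2\sqrt{x_k}$ genuinely define a convex region and not merely a star-shaped one.
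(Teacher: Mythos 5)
There is a genuine gap, and it sits exactly where you flagged it: convexity of $T_m$ is never established, and the route you propose for it cannot work. You plan to deduce convexity from the ``moreover'' clause by writing $T_m$ as the convex hull of the $\sigma$-images of the extreme rays of the polyhedral cone $U_m$. But $T_m$ is not a polyhedral cone, and the key bookkeeping claim --- that $\sigma$ of a nonnegative combination of extreme rays of $U_m$ lies in the convex hull of the $\sigma$-images of those rays --- is false. The extreme rays of $U_m$ are spanned by the vectors $e_i+e_j$ ($i\neq j$); the coordinate rays are not even in $U_m$, since $e_1$ violates $y_1\leq \sum_{i\neq 1}y_i$. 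Take $m=3$ and $y=(2,1,1)=(1,1,0)+(1,0,1)\in U_3$. Then $\sigma(y)=(4,1,1)$ lies in $T_3$ (indeed $\sqrt{4}=\sqrt{1}+\sqrt{1}$), but solving $a(1,1,0)+b(1,0,1)+c(0,1,1)=(4,1,1)$ forces $c=-1<0$, so $(4,1,1)$ is not in the cone generated by $\sigma(e_1+e_2)$, $\sigma(e_1+e_3)$, $\sigma(e_2+e_3)$. The set $\left\{x\in\rmm_+ : \sum_i\sqrt{x_i}=2\sqrt{x_k}\text{ for some }k\right\}$ is a curved hypersurface, not a finite union of rays, and the lemma asserts $T_m$ is the convex hull of that whole curved set. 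Your forward inclusion also explicitly invokes ``$T_m$'s convexity, once established,'' so as written the argument for convexity is circular.

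The missing idea is the one the paper uses: square the constraint. Writing $\norm{x}_{1/2}=(\sqrt{x_1}+\dots+\sqrt{x_m})^2$, the defining inequalities become $\norm{x}_{1/2}\geq 4x_k$, with a \emph{linear} right-hand side; the left-hand side is concave on $\rmm_+$ by the Minkowski inequality for exponent $1/2$, so each inequality cuts out a convex set and $T_m$ is convex. This fixes precisely the sign problem you noticed: coordinatewise concavity of $\sqrt{\cdot}$ goes the wrong way on the term $2\sqrt{x_k}$, but after squaring that term is linear. For the ``moreover'' clause the paper then argues by induction on $m$, using that a closed convex set is the convex hull of its boundary: at a boundary point of $T_m$ some defining inequality is tight; if all coordinates are positive the tight one must be $\norm{x}_{1/2}=4x_k$ for some $k$, and if some $x_i=0$ the point lies in a copy of $T_{m-1}$ and induction applies. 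Your closedness argument is essentially fine (though you should note that $\sigma$ is a homeomorphism of the nonnegative orthant, since continuous images of closed unbounded sets need not be closed); everything else in the lemma needs the concavity input.
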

\begin{proof}
$T_m$ is defined as a subset of $\rmm$ by the following $2m$ inequalities: $x_k \geq 0$  and  $\sqrt{x_1}+\ldots+\sqrt{x_m} \geq 2\sqrt{x_k}$ for all $k$. Therefore it is clear that $T_m$ is a closed set.

For $x=(x_1,\ldots,x_m) \in \rmm_+$ let $\norm{x}_{1/2}$ denote the $L^{1/2}$ norm of $x$:
$$\norm{x}_{1/2}=(\sqrt{x_1}+\ldots+\sqrt{x_m})^2.$$
We can restate inequalities of $T_m$ as $x_k \geq 0$ and $\norm{x}_{1/2} \geq 4x_k$ for all $k$. Now suppose that $x,y \in T_m$ and let $z=\lambda x+ (1- \lambda) y$ for some $0 \leq \lambda \leq 1$. It is clear that $z_k \geq 0$ for all $k$. It is known by the Minkowski inequality (\cite{HLP} p.30)  that $L^{1/2}$ norm is a concave function: $\norm{\lambda x+ (1- \lambda) y}_{1/2} \geq \lambda \norm{x}_{1/2}+(1-\lambda)\norm{y}_{1/2}$. Therefore $$\norm{z}_{1/2} \geq \lambda \norm{x}_{1/2}+(1-\lambda)\norm{y}_{1/2} \geq 4\lambda x_k +4(1-\lambda)y_k=4z_k.$$
Therefore $T_m$ is a convex cone.

To show that $T_m$ is the convex hull of the points where $\norm{x}_{1/2}=4x_k$ for some $k$ we proceed by induction. The base case $m=2$ is simple since $T_2$ is just a ray spanned by the point $(1,1)$. For the induction step we observe that any convex set is the convex hull of its boundary. For any point in the boundary of $T_m$ one of the defining $2m$ inequalities must be sharp. If a point $x$ is in the boundary of $T_m$ and $x_i \neq 0$ for all $i$ then the inequalities $x_i \geq 0$ are not sharp at $x$ and therefore the inequality  $\norm{x}_{1/2} \geq 4x_k$ must be sharp for some $k$ and we are done.

If $x_i=0$ for some $i$ then the point $x$ lies in the set $T_{m-1}$ in the subspace spanned by the $m-1$ standard basis vectors excluding $e_i$ and we are done by induction.
\end{proof}

\begin{theorem}\label{SOS=T_8}
With the choices of affine representatives $w_i$, so that the coefficients in the unique Cayley-Bacharach relation are of absolute value 1, we have $Sq'=T_m$.
\end{theorem}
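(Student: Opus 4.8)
The plan is to establish the two inclusions $Sq' \subseteq T_m$ and $T_m \subseteq Sq'$ separately, the second being the substantive one.

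For $Sq' \subseteq T_m$, write a given sum of squares as $p = \sum_{j=1}^N q_j^2$ with $q_j \in \hd$ and form the vectors $\mathbf{v}_i = (q_1(w_i),\dots,q_N(w_i)) \in \rr^N$, so that $p(w_i) = \norm{\mathbf{v}_i}^2$ and $\sqrt{p(w_i)} = \norm{\mathbf{v}_i} \geq 0$. The Cayley-Bacharach relation of Lemma \ref{NEWLEMMA CB}, which for the representatives $w_i$ reads $\sum_{i=1}^m a_i q(w_i) = 0$ with $a_i = \pm 1$, applied to each $q_j$ gives $\sum_i a_i q_j(w_i)=0$, i.e. $\sum_{i=1}^m a_i \mathbf{v}_i = 0$ in $\rr^N$. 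Hence $\mathbf{v}_k = -\sum_{i\neq k} a_i\mathbf{v}_i$ for every $k$, and the triangle inequality yields $\sqrt{p(w_k)} = \norm{\mathbf{v}_k} \leq \sum_{i \neq k}\norm{\mathbf{v}_i} = \sum_{i\neq k}\sqrt{p(w_i)}$, that is, $\sum_i\sqrt{p(w_i)} \geq 2\sqrt{p(w_k)}$. Together with $p(w_i) \geq 0$ this says $E(p) \in T_m$.

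For $T_m \subseteq Sq'$ I would invoke Lemma \ref{TTn}: $T_m$ is the convex hull of the points $x \in \rmm_+$ with $\sum_i \sqrt{x_i} = 2\sqrt{x_k}$ — equivalently $\sqrt{x_k} = \sum_{i\neq k}\sqrt{x_i}$ — for some $k$. Since $Sq' = E(\sq)$ is convex, it suffices to realize each such generator $x$ as $E(p)$ for a single square $p = q^2$, $q \in \hd$; this requires $q(w_i) = \epsilon_i\sqrt{x_i}$ for some sign vector $\epsilon \in \{\pm 1\}^m$. The key structural input is that the degree-$d$ evaluation map $E_d : \hd \to \rmm$ has image exactly the hyperplane $\{c \in \rmm : \sum_i a_i c_i = 0\}$: the image lies in this hyperplane because $(a_i)$ is a valid relation, and it has codimension precisely one because, by the uniqueness clause of Lemma \ref{NEWLEMMA CB}, the space of relations — the annihilator of $\operatorname{im} E_d$ — is one-dimensional. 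So I only need signs with $\sum_i a_i\epsilon_i\sqrt{x_i} = 0$; taking $\epsilon_i = a_i$ for $i \neq k$ and $\epsilon_k = -a_k$ gives $\sum_i a_i\epsilon_i\sqrt{x_i} = -\sqrt{x_k} + \sum_{i\neq k}\sqrt{x_i} = 0$ by the defining relation of $x$ (indices with $x_i = 0$ are harmless). Then a form $q \in \hd$ with $q(w_i) = \epsilon_i\sqrt{x_i}$ exists, $p = q^2 \in \sq$, and $E(p) = x$, so $x \in Sq'$; convexity of $Sq'$ then gives $T_m \subseteq Sq'$, completing the proof.

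The step requiring care — rather than being purely formal — is the identification of $\operatorname{im} E_d$ with the full Cayley-Bacharach hyperplane, since that is what licenses prescribing the values $\epsilon_i\sqrt{x_i}$ by an actual degree-$d$ form; this is precisely where the restriction to the cases $(3,6)$ and $(4,4)$ enters, through the uniqueness of the Cayley-Bacharach relation. A minor bookkeeping point is to check that the sign choice above, and hence the conclusion, remains valid for the generators of $T_m$ lying on coordinate hyperplanes (those with some $x_i = 0$), so that all of $T_m$ is covered.
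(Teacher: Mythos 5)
Your proof is correct and follows essentially the same route as the paper: the forward inclusion comes from the Cayley--Bacharach relation plus a triangle inequality (you apply it to the vectors $(q_1(w_i),\dots,q_N(w_i))$ in $\rr^N$, while the paper first reduces to a single square via convexity of $T_m$ and then uses the scalar triangle inequality), and the reverse inclusion uses Lemma \ref{TTn} together with the fact that uniqueness of the Cayley--Bacharach relation makes $E(\hd)$ the full hyperplane orthogonal to $a$, so the sign-flipped square roots of a generator of $T_m$ lift to a form $q$ with $E(q^2)=x$. The sign choice you make ($\epsilon_i=a_i$ for $i\neq k$, $\epsilon_k=-a_k$) is exactly the paper's choice $y_i=\sqrt{x_i}/a_i$, $y_k=-\sqrt{x_k}/a_k$.
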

\begin{proof}
By slight abuse of notation we will also use $E$ as the evaluation map at $w_i$ for forms in $\hd$. Let $a=(a_1,\ldots,a_m)$ be the vector of coefficients in the Cayley-Bacharach relation $a_1p(w_1)+\ldots+a_mp(w_m)=0$ with $a_i=\pm 1$ and $f \in H_{n,d}$.

By uniqueness of the Cayley-Bacharach relation by know that $L=E(H_{n,d})$ is the hyperplane in $\mathbb{R}^m$ perpendicular to $a$. To show that $Sq' \subseteq T_m$ it suffices to show that $E(q^2) \in T_m$ for any $q \in H_{n,d}$. Let $s=E(q)$ and $t=E(q^2)$. We know that $E(q^2)=(t_1,\ldots,t_m)=(s_1^2,\ldots,s_m^2)$.
By the Cayley-Bacharach relation we have $a_1s_1+\ldots+a_ms_m=0$ with $a_i=\pm 1$. Without loss of generality, we may assume that $s_1$ has the maximal absolute value among $s_i$. Multiplying the Cayley-Bacharach relation by $-1$, if necessary, we can make $a_1=-1$. Then we have $s_1=a_2s_2+\ldots+a_ms_m$. We can now write $\sqrt{t_1}=\pm \sqrt{t_2} \pm \sqrt{t_3}\pm \ldots \pm \sqrt{t_m}$ with the exact signs depending on $a_i$ and signs of $s_i$. Therefore we see that $2\sqrt{t_1} \leq \sqrt{t_1}+\ldots+\sqrt{t_m}.$ Since $s_1$ had the largest absolute value among $s_i$ it follows that $Sq' \subseteq T_{m}$.

To show the reverse inclusion $T_m \subseteq Sq'$ we use Lemma \ref{TTn}. It suffices to show that all points in $x \in T_m$ with $2\sqrt{x_k}=\sqrt{x_1}+\ldots+\sqrt{x_m}$ for some $k$, are also in $Sq'$. Without loss of generality may assume that $k=1$ and we have $\sqrt{x_1}=\sqrt{x_2}+\ldots+\sqrt{x_m}.$ Let $y=(y_1,\ldots,y_m)$ with $y_1=-\sqrt{x_1}/a_1$ and $y_i=\sqrt{x_i}/a_i$ for $2 \leq i \leq m$. It follows that $a_1y_1+\ldots+a_my_m=0$. Therefore $y \in E(H_{n,d})$ and $y=E(q)$ for some quadratic form $q$. Then $E(q^2)=x$ and we are done.
\end{proof}

Note that this already proves Hilbert's Theorem that there exist nonnegative polynomials that are not sums of squares for the cases $(3,6)$ and $(4,4)$. In fact Hilbert's proof, by different methods, established that the standard basis vectors are not in $Sq'$, while we provide a complete description of $Sq'$.

We now describe what happens if we do not rescale the affine representatives and the coefficients in the Cayley-Bacharach relation are arbitrary real numbers. Suppose that the unique Cayley-Bacharach relation satisfied by all forms $f \in \hd$ is given by: $u_1f(v_1)+\ldots+u_mf(v_m)=0$ with nonzero coefficients $u_i$. Let $E$ be the evaluation map at the points $v_i$ and let $Sq'$ be the image of $\sq$ under $E$.

\begin{cor}
The cone $Sq'$ is the subset of $\mathbb{R}^m_+$ satisfying the following $m$ inequalities:
$$|u_1|\sqrt{x_1}+\ldots+|u_m|\sqrt{x_m} \geq 2|u_k|\sqrt{x_k},$$
for all $1 \leq k \leq m$.
\end{cor}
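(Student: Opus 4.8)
The plan is to deduce this from Theorem \ref{SOS=T_8} by a positive diagonal rescaling, with no new work beyond linear algebra. Put $w_i = |u_i|^{1/d} v_i$ as in Theorem \ref{SOS=T_8}, so that the unique Cayley--Bacharach relation satisfied by forms in $\hd$ on the points $w_i$ has all coefficients equal to $\pm 1$. By that theorem, the evaluation map at the $w_i$, applied to the sos cone, has image exactly $T_m$: writing $E_w$ for evaluation of forms in $\h$ at the $w_i$, we have $E_w(\sq) = T_m$.

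Next I would compare $E_w$ with the evaluation map $E$ at the original points $v_i$. Every $p \in \h$ has degree $2d$, so $p(w_i) = p(|u_i|^{1/d} v_i) = |u_i|^{2}\, p(v_i)$ for each $i$; hence $E_w = D \circ E$, where $D = \operatorname{diag}(|u_1|^2, \ldots, |u_m|^2)$ is an invertible diagonal matrix with strictly positive entries (all $u_i \neq 0$). Since $\sq$ is a cone and $D$ is a linear isomorphism of $\rmm$, it follows immediately that $Sq' = E(\sq) = D^{-1} E_w(\sq) = D^{-1} T_m$.

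It then remains to identify $D^{-1}T_m$ with the set cut out by the asserted inequalities. A point $x \in \rmm$ lies in $D^{-1}T_m$ if and only if $Dx \in T_m$. The membership condition $Dx \in \rmm_+$ is just $x_i \geq 0$ for all $i$, because $|u_i|^2 > 0$; and once $x_i \geq 0$ we may write $\sqrt{(Dx)_i} = \sqrt{|u_i|^2 x_i} = |u_i|\sqrt{x_i}$, so each of the $m$ defining inequalities $\sqrt{(Dx)_1} + \cdots + \sqrt{(Dx)_m} \geq 2\sqrt{(Dx)_k}$ of $T_m$ turns into $|u_1|\sqrt{x_1} + \cdots + |u_m|\sqrt{x_m} \geq 2|u_k|\sqrt{x_k}$. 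Running over all $k$ gives exactly the stated description of $Sq'$.

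There is no serious obstacle here; the statement is essentially a restatement of Theorem \ref{SOS=T_8} in unnormalized coordinates. The only points requiring care are the degree bookkeeping --- the coefficients $u_i$ come from the Cayley--Bacharach relation on degree-$d$ forms, while $E$ and $\sq$ live in degree $2d$, which is why the rescaling factor $|u_i|^{1/d}$ enters $D$ squared --- and the observation that $D$ has strictly positive diagonal, which is what lets the square roots transform cleanly under the substitution.
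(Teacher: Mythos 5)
Your proposal is correct and follows essentially the same route as the paper: both reduce the statement to Theorem \ref{SOS=T_8} via the rescaling $w_i = |u_i|^{1/d}v_i$, which acts on degree-$2d$ evaluations as the positive diagonal matrix $\operatorname{diag}(|u_1|^2,\ldots,|u_m|^2)$, and then transport the defining inequalities of $T_m$ through this isomorphism. The paper phrases the same rescaling in terms of the hyperplanes $L_u$, $L_a$ and convex hulls of squares, but the content is identical.
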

\begin{proof}
Let $a \in \mathbb{R}^m$ be a vector with $a_i=u_i/|u_i|$ and let $D$ be the diagonal $m \times m$  matrix with $D_{ii}=|u_i|$. Let $L_a$ be the hyperplane of vectors in $\mathbb{R}^m$ perpendicular to $a$ and $L_u$ be the hyperplane of vectors perpendicular to $u$. The linear transformation $\bar{D}$ sending $x \in \mathbb{R}^m$ to $Dx$ sends $L_u$ to $L_a$. \\
\indent Since the Cayley-Bacharach relation is unique it follows that $Sq'$ is the convex hull of the points $(y_1^2,\ldots,y_m^2)$ with $y=(y_1,\ldots,y_m) \in L_u$. We have shown in Theorem \ref{SOS=T_8} that the convex hull of squares from $L_a$ is $T_m$. Since $\bar{D}$ sends $L_u$ to $L_a$ it follows that $\bar{D}^2$ sends $Sq'$ to $T_m$.\\
\indent By Lemma \ref{TTn} we know that $T_m$ is the set of $x \in \mathbb{R}^m_+$ satisfying inequalities $\sqrt{x_1}+\ldots+\sqrt{x_m} \geq 2\sqrt{x_k}$
for all $1 \leq k \leq m$. Now suppose that $x=D^2y$ with $x \in T_m$ and $y \in Sq'$. Then $x_i=|u_i|^2y_i$ and it follows that $y$ satisfies $|u_1|\sqrt{x_1}+\ldots+|u_m|\sqrt{x_m} \geq 2|u_k|\sqrt{x_k}$ for all $1 \leq k \leq m$. Since $D^2$ is an invertible linear transformation ($u_i \neq 0$) it follows that all $y$ satisfying these inequalities are in $Sq'$.
\end{proof}

\section{Structure of Extreme Rays of $\Sigma_{3,6}^*$ and $\Sigma_{4,4}^*$}
We now return to the study of extreme rays of $\sq^*$ for the cases $(3,6)$ and $(4,4)$. Let $W$ be an $n$-dimensional linear subspace of $\hd$ with $\vc(W)=\emptyset$. Let $f_1,\dots,f_{n-1} \in W$ be forms intersecting transversely in $s=d^{n-1}$ points $\gamma_1,\dots, \gamma_s$. Let $z_1,\ldots,z_s$ be affine representatives for points $\gamma_i$. By Lemma \ref{NEWLEMMA CB} there is a unique linear relation for values of forms in $\hd$ on the points $z_i$:
$u_1f(z_1)+\dots+u_sf(z_s)=0,$ for all $f \in \hd.$ The unique (up to a constant multiple) linear functional $\ell$ vanishing on $\langle W \rangle_{2d}$ can be written as a linear combination of point evaluations on the points $z_i$:
$$\ell=\sum_{i=1}^s \mu_i \ell_{z_i}, \quad \text{for some} \quad \mu_i\in \mathbb{C} .$$

Let $f_n$ be a form in $W$ such that $f_1,\dots,f_n$ form a basis of $W$. Note that the values $f_n(z_i)$ are the same regardless of which $f_n \in W$ we choose. We now explain how to determine the coefficients $\mu_i$ from the knowledge of the Cayley-Bacharach coefficients $u_i$ and the values $f_n(z_i)$.

\begin{lemma}\label{NEWLEMMA Relation Structure}
$$\mu_i=\frac{u_i}{f_n(z_i)}; \qquad \ell=\sum_{i=1}^s \frac{u_i}{f_n(z_i)}\ell_{z_i}\quad i=1 \dots s.$$
\end{lemma}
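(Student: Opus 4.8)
The plan is to use the one property that pins $\ell$ down uniquely, namely that (up to scaling) $\ell$ is the only linear functional vanishing on $\langle W \rangle_{2d}$, together with the uniqueness of the Cayley--Bacharach relation from Lemma \ref{NEWLEMMA CB}. Since $f_1,\dots,f_n$ span $W$ and each $f_j$ has degree $d$, the degree-$2d$ part of the ideal generated by $W$ is $\langle W \rangle_{2d}=\sum_{j=1}^n f_j\cdot \hd$; in particular $f_n\cdot \hd\subseteq \langle W\rangle_{2d}$, so $\ell(f_n g)=0$ for every $g\in \hd$. Writing $\ell=\sum_{i=1}^s \mu_i\ell_{z_i}$ and evaluating, this says
$$\sum_{i=1}^s \mu_i\, f_n(z_i)\, g(z_i)=0 \qquad \text{for all } g\in \hd,$$
i.e. the vector $\big(\mu_1 f_n(z_1),\dots,\mu_s f_n(z_s)\big)$ is a set of coefficients for a linear relation among the values of forms in $\hd$ at the points $z_i$.

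Before invoking uniqueness I would first check that $f_n(z_i)\neq 0$ for all $i$, so that the asserted formula makes sense: each $z_i$ is a common zero of $f_1,\dots,f_{n-1}$, and if $f_n(z_i)=0$ as well then $z_i$ would be a common projective zero of the basis $f_1,\dots,f_n$ of $W$, hence of all of $W$, contradicting $\vc(W)=\emptyset$. (This also shows the values $f_n(z_i)$ only change by a common nonzero scalar if $f_n$ is replaced by another form completing $f_1,\dots,f_{n-1}$ to a basis, since any two such differ by a scalar multiple plus a combination of the $f_j$, $j<n$, which vanish at the $z_i$.)

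Now by Lemma \ref{NEWLEMMA CB} the Cayley--Bacharach relation on the values of $\hd$ at $z_1,\dots,z_s$ is unique up to a scalar, so there is a constant $c$ with $\mu_i f_n(z_i)=c\,u_i$ for every $i$, and since $f_n(z_i)\neq 0$ we get $\mu_i = c\,u_i/f_n(z_i)$. Because $\ell$ is itself only determined up to a nonzero scalar, we may absorb $c$ and obtain $\mu_i=u_i/f_n(z_i)$ and the displayed expression for $\ell$; here $c\neq 0$ automatically, since $\ell\neq 0$ and $u_i\neq 0$ force $\mu_i\neq 0$. I do not expect a genuine obstacle: the only delicate point is the non-vanishing $f_n(z_i)\neq 0$, which is precisely where the hypothesis $\vc(W)=\emptyset$ is used, and the rest is a substitution followed by the uniqueness statement of the Cayley--Bacharach theorem.
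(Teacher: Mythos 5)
Your proposal is correct, and it runs the paper's argument in the converse direction. The paper takes the candidate functional $\ell=\sum_i \bigl(u_i/f_n(z_i)\bigr)\ell_{z_i}$ and \emph{verifies} that it annihilates $\langle W\rangle_{2d}$: the products $f_jq$ for $j<n$ are killed because the $z_i$ are common zeroes of $f_1,\dots,f_{n-1}$, and $\ell(f_nq)=\sum_i u_iq(z_i)=0$ by the \emph{existence} of the Cayley--Bacharach relation; uniqueness of the functional vanishing on $\langle W\rangle_{2d}$ then identifies it with $\ell$. You instead start from the known representation $\ell=\sum_i\mu_i\ell_{z_i}$, observe that $\ell(f_ng)=0$ forces $\bigl(\mu_if_n(z_i)\bigr)_i$ to be a relation on the values of $\hd$ at the $z_i$, and invoke the \emph{uniqueness} half of Lemma \ref{NEWLEMMA CB} to solve for the $\mu_i$. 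Both are one-line computations of $\ell(f_nq)$ plus an appeal to the Cayley--Bacharach lemma, so neither is harder; your direction has the merit of showing the formula is forced rather than merely checking it, and you also supply a point the paper leaves implicit, namely that $f_n(z_i)\neq 0$ (since $z_i$ already kills $f_1,\dots,f_{n-1}$ and $\vc(W)=\emptyset$), without which the displayed expression would not even be well defined. Your handling of the scalar $c$ is also fine: $c=0$ would force all $\mu_i=0$ and hence $\ell=0$, and the remaining nonzero constant is absorbed into the normalization of $\ell$, which is only determined up to a scalar multiple.
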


\begin{proof}
Let $\ell$ be defined as above. We need to show that $\ell$ vanishes on all forms in $\langle W \rangle_{2d}$. We observe that for any form $q \in \hd$ we have $$\ell(f_1q)=\dots=\ell(f_{n-1}q)=0$$ since $\ell$ is defined by values at common zeroes of $f_1, \dots,f_{n-1}$. Also $$\ell(f_nq)=\sum_{i=1}^s \frac{u_i}{f_n(z_i)}f_n(z_i)q(z_i)=\sum_{i=1}^s u_i q(z_i)=0,$$ by the Cayley-Bacharach relation.

\end{proof}

Since the forms $f_1,\dots f_{n-1}$ are real, the set $\Gamma=\{\gamma_1,\ldots,\gamma_s\}$ is invariant under conjugation. Hence we can choose affine representatives $z_i$ so that the set $S=\{z_1,\ldots,z_s\}$ is invariant under conjugation. By uniqueness of the Cayley-Bacharach relation it follows that if $z_i=\bar{z}_j$ then $u_i=\bar{u}_j$. We now show that if the functional $\ell$ is nonnegative on squares, then we can restrict the number of complex points $z_i$, forcing most of the intersection points to be real.

\subsection{Number of Complex Points}\label{NEW SECTION Complex Points}

Suppose that $S$ is a finite set of points in $\co^n$ that is invariant under conjugation: $\bar{S}=S$. Let $S$ be given by $S=\{r_1,\ldots,r_k, z_1,\ldots,z_m, \bar{z}_1,\ldots,\bar{z}_m\}$ with $r_i \in \rr^n$ and $z_i,\bar{z}_i \in \co^n$, $z_i\neq \bar{z}_i$. Let $\ell: \h \rightarrow \rr$ be a linear functional given as a combination of evaluations on $S$:
$$\ell(p)=\sum_{i=1}^k \lambda_ip(r_i)+\sum_{i=1}^m \left(\mu_ip(z_i)+\bar{\mu}_ip(\bar{z}_i)\right),\quad p \in \h$$

\noindent with $\lambda_i \in \rr,$ $\mu_i \in \co$ and $\alpha_i,\mu_i \neq 0$.

Let $E_{\rr}: \hd \rightarrow \rr^{k+2m}$ be the real evaluation projection of forms in $\hd$ given by
$$E_{\rr}(p)=\left(p(r_1),\ldots,p(r_k),\operatorname{Re} p(z_1),\operatorname{Im} p(z_1),\ldots,\operatorname{Re} p(z_m),\operatorname{Im} p(z_m)\right), \quad p \in \hd.$$

\noindent Let $c$ be the dimension of the image of $E_{\rr}$: $$c=\dim E_{\rr}(\hd).$$

\begin{lemma}Suppose that the quadratic form $\ql$ is positive semidefinite. Then $c \leq k+m$.
\end{lemma}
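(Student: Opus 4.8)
The plan is to extract from the positive semidefiniteness of $\ql$ enough vanishing of $\ell$ on an ideal to bound the dimension $c$. Consider the linear functional $\ell$ written as a sum of evaluations on the real points $r_i$ and on the complex conjugate pairs $z_i,\bar z_i$. The key observation is that $\ell$ factors through the real evaluation map $E_{\rr}$: indeed, $p(r_i)$ is a real coordinate of $E_{\rr}(p)$, and $\mu_i p(z_i)+\bar\mu_i p(\bar z_i) = 2\operatorname{Re}(\mu_i p(z_i))$ is a real linear combination of $\operatorname{Re} p(z_i)$ and $\operatorname{Im} p(z_i)$, which are again coordinates of $E_{\rr}(p)$. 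So there is a real linear functional $\tilde\ell$ on $\rr^{k+2m}$ with $\ell = \tilde\ell\circ E_{\rr}$ on forms of degree $2d$; restricting to the image, $\ell$ is represented by a functional on $E_{\rr}(\hd\cdot\hd)$ of dimension at most $c' := \dim E_{\rr}(\h)$. The more delicate point is that the relevant bound is governed by $c = \dim E_{\rr}(\hd)$ — the image of degree-$d$ forms, not degree $2d$.

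Next I would exploit psd-ness of $\ql$ to show that the kernel of $\ql$, as a quadratic form on $\hd$, is large. Define on $\hd$ the real bilinear form $B_\ell(p,q)=\ell(pq)$ as in Lemma \ref{LEMMA kernel structure}. I claim this form factors through $E_{\rr}$: if $E_{\rr}(p)=0$, i.e. $p$ vanishes at all $r_i$ and at all $z_i$ (hence at all $\bar z_i$), then $pq$ vanishes at every point of $S$ for every $q\in\hd$, so $\ell(pq)=0$, meaning $p\in\ker\ql$. Therefore $\ker\ql \supseteq \ker(E_{\rr}\!\restriction_{\hd})$, so $$\operatorname{rank}\ql = \dim\hd - \dim\ker\ql \le \dim\hd - (\dim\hd - c) = c.$$ Now the signature comes in: since $\ql$ is psd, it is determined by its values, and on the quotient $\hd/\ker\ql$ it induces a positive definite form; the number of linearly independent ``squares'' $p\mapsto \ell(p^2)$ one can see is exactly $\operatorname{rank}\ql$, which we have just bounded by $c$.

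The heart of the argument is turning this rank bound into the inequality $c\le k+m$. The idea: for each real point $r_i$, evaluation at $r_i$ gives a nonnegative rank-one contribution $p\mapsto \lambda_i p(r_i)^2$ to $\ql$, but this expression $p(r_i)^2$ is already an honest square of the real linear functional $p\mapsto p(r_i)$ on $\hd$, so it contributes at most $1$ to the rank. For each conjugate pair, the contribution to $\ql(p)=\ell(p^2)$ is $2\operatorname{Re}(\mu_i p(z_i)^2)$, and since $p(z_i)^2 = (\operatorname{Re} p(z_i))^2 - (\operatorname{Im} p(z_i))^2 + 2i\operatorname{Re} p(z_i)\operatorname{Im} p(z_i)$, this is a real quadratic form in the two real functionals $\operatorname{Re} p(z_i),\operatorname{Im} p(z_i)$, hence of rank at most $2$ — but it is an indefinite form of signature $(1,1)$ when $\mu_i\ne 0$, so it can contribute at most $1$ to the rank of a psd form once we account for cancellation. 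Making the last step precise is the main obstacle: one must argue that psd-ness of the total sum $\ql$ forces the negative directions of the pair-contributions to be cancelled by the other terms, so that the effective rank contributed by $E_{\rr}(\hd)$ drops from $k+2m$ down to $k+m$. Concretely I would pick, for each $i$, a form $p_i\in\hd$ with $p_i(z_i)$ purely imaginary and $p_i$ vanishing at all the other points of $S$ (possible when the points are in ``general enough'' position as comes from a transverse intersection, or in general by a dimension count on $E_{\rr}$); then $\ql(p_i)=2\operatorname{Re}(\mu_i p_i(z_i)^2)<0$ unless that coordinate direction is not actually present in $E_{\rr}(\hd)$, i.e. unless it is a linear relation. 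Counting these forced relations gives at least $m$ independent linear dependencies among the $k+2m$ coordinate functionals restricted to $\hd$, hence $c\le k+2m-m = k+m$, which is the claim.
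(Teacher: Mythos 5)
Your opening reductions are fine: $\ql$ does factor through $E_{\rr}$ as $\ql=\bar Q\circ E_{\rr}$ where $\bar Q$ is the quadratic form $\sum_i\lambda_i x_i^2+\sum_i 2\operatorname{Re}\bigl(\mu_i(x_{2i-1}+\sqrt{-1}\,x_{2i})^2\bigr)$ on $\rr^{k+2m}$, and each conjugate-pair block is indeed a real quadratic form of signature $(1,1)$ since $\mu_i\neq 0$. (The rank bound $\operatorname{rank}\ql\le c$ from your second paragraph is correct but points the wrong way and is never used.) The problem is the final step, which you yourself flag as the obstacle, and it does not close as written. Two issues. First, the sign: if $p_i(z_i)$ is purely imaginary then $p_i(z_i)^2<0$ is real, so $2\operatorname{Re}(\mu_i p_i(z_i)^2)=2p_i(z_i)^2\operatorname{Re}\mu_i$ is negative only when $\operatorname{Re}\mu_i>0$; for general $\mu_i$ you must rotate the target value by an angle depending on $\arg\mu_i$ to land in the negative cone of the block. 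Second, and fatally, the counting: from ``for each $i$, no $p\in\hd$ hits a prescribed negative direction of block $i$ while vanishing at all other points of $S$'' you conclude $m$ \emph{independent} linear relations on $E_{\rr}(\hd)$. But excluding $m$ specific vectors from a subspace does not force codimension $m$: the hyperplane $x_1+\dots+x_N=0$ misses all $N$ standard basis vectors yet has codimension $1$. So the inference ``$m$ forced relations, hence $c\le k+2m-m$'' is a genuine gap, not a technicality; moreover the required interpolation ($p_i$ vanishing at all other points of $S$ with prescribed value at $z_i$) is itself unjustified for an arbitrary conjugation-invariant $S$.

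The correct way to finish, which is what the paper does, avoids interpolation entirely. Let $N\subset\rr^{k+2m}$ be the span of one negative eigendirection from each of the $m$ conjugate-pair blocks; then $\dim N=m$ and $\bar Q$ is negative definite on $N$ (the blocks involve disjoint variables, so the negative parts add). Since $\ql$ is psd, $\bar Q$ is positive semidefinite on $E_{\rr}(\hd)$, and a subspace on which a quadratic form is psd meets a subspace on which it is negative definite only in $\{0\}$. Hence
$$c+m=\dim E_{\rr}(\hd)+\dim N\le k+2m,$$
which is the claim. Your intuition about the $(1,1)$ blocks is exactly the right one; what is missing is replacing the vector-by-vector exclusion argument with this single dimension count against a negative definite subspace.
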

\begin{proof}
The quadratic form $\ql: \hd \rightarrow \rr$ is defined by
$$\ql(q)=\sum_{i=1}^k \lambda_iq^2(r_i)+\sum_{i=1}^m \left(\mu_iq^2(z_i)+\bar{\mu}_iq^2(\bar{z}_i)\right), \quad q \in \hd.$$

\noindent Let $\bar{Q}_{\ell}$ be the quadratic form on $\co^{k+2m}$ given by: $$\sum_{i=1}^k \lambda_ix_i^2+\sum_{i=1}^m \mu_i\left(x_{2i-1}+\sqrt{-1}x_{2i} \right)^2+\bar{\mu}_i\left(x_{2i-1}-\sqrt{-1}x_{2i}\right)^2.$$

\noindent By its definition, the form $\ql$ is a composition on $E_{\rr}$ and $\bar{Q}_{\ell}$:

$$\ql=\bar{Q}_{\ell}\circ E_{\rr}.$$

Each of the $2$ variable blocks $\mu_i\left(x_{2i-1}+\sqrt{-1}x_{2i} \right)^2+\bar{\mu}_i\left(x_{2i-1}-\sqrt{-1}x_{2i}\right)^2$ has one positive and one negative eigenvalue, since $\mu_i \neq 0$. Therefore the form $\bar{Q}_{\ell}$ has at least $m$ negative eigenvalues, and thus $\ql$ is strictly negative on a subspace of dimension at least $m$.

Recall that the form $\ql$ is positive semidefinite, which implies that $\bar{Q}_{\ell}$ is psd on the image of $E_{\rr}$. Thus the image of $E_{\rr}$ has codimension at least $m$ and the Lemma follows.
\end{proof}

We can restate the Lemma as follows: note that $|S|=k+2m$ and $|S|-c$ is the number of linearly independent relations that evaluation on  $S$ imposes on forms in $\hd$. Hence we get the following Corollary:

\begin{cor}
Suppose that $\ql$ is positive semidefinite, then the number of complex conjugate pairs in $S$ is at most equal to the number of linearly independent relations on $S$ for forms of degree $d$.
\end{cor}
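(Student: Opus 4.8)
The plan is to read the Corollary off directly from the preceding Lemma, which carries all of the content; what remains is only to translate the inequality $c \le k+m$ into the language of linear relations on $S$.

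First I would recall the bookkeeping. The conjugation-invariant set $S$ has $|S| = k+2m$ points, and the real evaluation map $E_{\rr}\colon \hd \to \rr^{k+2m}$ has image of dimension $c$. By rank--nullity, the space of $\rr$-linear relations satisfied by the coordinates $p(r_1),\dots,p(r_k),\operatorname{Re}p(z_1),\operatorname{Im}p(z_1),\dots,\operatorname{Re}p(z_m),\operatorname{Im}p(z_m)$, taken over all $p \in \hd$, has dimension $(k+2m)-c$. This is exactly what is meant by ``the number of linearly independent relations on $S$ for forms of degree $d$'': since $S$ is invariant under conjugation, a $\co$-linear relation $\sum_i \mu_i p(z_i)=0$ occurs together with its conjugate, and a conjugate pair of such relations is equivalent to two independent $\rr$-linear relations on the real and imaginary parts, whereas a relation with real coefficients on the real points $r_i$ is a single $\rr$-linear relation; the two ways of counting therefore agree, and both equal $(k+2m)-c$.

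Next I would invoke the Lemma: since $\ql$ is positive semidefinite, we have $c \le k+m$. Substituting, the number of linearly independent relations on $S$ for forms of degree $d$ equals
$$(k+2m)-c \ \ge\ (k+2m)-(k+m) \ =\ m,$$
and $m$ is by definition the number of complex conjugate pairs occurring in $S$. This is precisely the assertion of the Corollary.

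I do not expect any genuine obstacle here once the Lemma is in hand; the only point that deserves a sentence of care is the identification just described — that the number of relations imposed by $S$ on degree-$d$ forms equals the codimension $(k+2m)-c$ of the image of $E_{\rr}$, and in particular that one conjugate pair of complex relations is counted correctly as two real relations on the split real/imaginary coordinates.
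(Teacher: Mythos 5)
Your proposal is correct and follows the paper's own argument exactly: the paper likewise observes that $|S|-c=(k+2m)-c$ counts the linearly independent relations imposed by $S$ and then applies the Lemma's bound $c\le k+m$ to get at least $m$ relations. Your extra sentence identifying a conjugate pair of complex relations with two real relations on the split coordinates is a worthwhile clarification of a point the paper leaves implicit.
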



Applying this to transversal intersections in the cases $(3,6)$ and $(4,4)$ we get:
\begin{cor}\label{NEWCOR Number of Zeroes}
Suppose that $\ell$ is an extreme ray of $\sq^*$ that does not correspond to point evaluation, and let $f_1,\ldots,f_{n-1}$ be forms in the kernel $\wl$ of $\ql$ intersecting transversely in $s=d^{n-1}$ points, $\Gamma=\{\gamma_1,\dots,\gamma_s\}$. Then the set $\Gamma$ includes at most $1$ complex conjugate pair and the rest of the points in $\Gamma$ are real.
\end{cor}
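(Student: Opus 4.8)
The plan is to reduce the statement to the Corollary just established that bounds the number of complex conjugate pairs in $S$, fed by the uniqueness part of the Cayley-Bacharach theorem (Lemma \ref{NEWLEMMA CB}). First I would fix the setup. Since $\ell \in \sq^*$ the quadratic form $\ql$ is positive semidefinite, and since $\ell$ does not correspond to point evaluation, Lemma \ref{LEMMA rank 1 quad forms} forces $\operatorname{rank} \ql \geq 2$, so Corollary \ref{COROLLARY no common zeroes} gives $\vc(\wl) = \emptyset$, while Theorem \ref{COROLLARY Dimension Kernel} gives $\dim \wl = n$. Let $\gamma_1, \dots, \gamma_s$ be the transverse intersection points of $f_1, \dots, f_{n-1} \in \wl$, and choose affine representatives $z_1, \dots, z_s$ so that the set $S = \{z_1, \dots, z_s\}$ is invariant under conjugation.

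Next I would check that $\ell$ has the shape to which the preceding Corollary applies: a real-valued functional $\ell = \sum_i \mu_i \ell_{z_i}$ with every $\mu_i \neq 0$. Complete $f_1, \dots, f_{n-1}$ to a basis $f_1, \dots, f_n$ of $\wl$ (possible since $\dim \wl = n$); then by Lemma \ref{NEWLEMMA Relation Structure} we have $\mu_i = u_i / f_n(z_i)$, where the $u_i$ are the Cayley-Bacharach coefficients. These $u_i$ are all nonzero by Lemma \ref{NEWLEMMA CB}, and $f_n(z_i) \neq 0$ for every $i$: otherwise all of $f_1, \dots, f_{n-1}, f_n$ would vanish at $z_i$, so every form in $\wl$ would vanish at $z_i$, contradicting $\vc(\wl) = \emptyset$. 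Hence every $\mu_i \neq 0$; and the conjugation invariance of $S$ together with uniqueness of the Cayley-Bacharach relation (so that $u_i = \overline{u_j}$, and therefore $\mu_i = \overline{\mu_j}$, whenever $z_i = \overline{z_j}$) shows $\ell$ is real with conjugate-paired coefficients, which is exactly the required hypothesis.

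Finally I would invoke the Corollary immediately above: since $\ql$ is positive semidefinite, the number of complex conjugate pairs in $S$ is at most the number of linearly independent linear relations that evaluation at the points of $S$ imposes on forms of degree $d$. But we are in the cases $(3,6)$ or $(4,4)$ with $f_1, \dots, f_{n-1}$ intersecting transversely in $s = d^{n-1}$ points, so Lemma \ref{NEWLEMMA CB} guarantees that such a relation exists and is unique, i.e.\ the space of these relations is exactly one-dimensional. Therefore $S$, and hence $\Gamma$, contains at most one complex conjugate pair, and all remaining points are real, which is the assertion. I do not expect a genuine obstacle here, since the substantive work has been front-loaded into the eigenvalue count of the preceding Lemma and Corollary and into Cayley-Bacharach; the only point needing care is the nonvanishing $f_n(z_i) \neq 0$, which is precisely what ensures that every evaluation coefficient $\mu_i$ in $\ell$ is nonzero and thus that the preceding Corollary genuinely applies.
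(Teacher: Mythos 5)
Your proposal is correct and follows the same route as the paper, which derives this corollary directly from the preceding lemma on complex conjugate pairs together with the uniqueness of the Cayley--Bacharach relation. Your extra care in checking that every coefficient $\mu_i = u_i/f_n(z_i)$ is nonzero (via $u_i \neq 0$ and $\vc(\wl) = \emptyset$) is a detail the paper leaves implicit, and it is handled correctly.
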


\section{Proofs of Main Theorems}\label{NEW SECTION Proofs}
We now prove Theorem \ref{THEOREM MAIN (3,6)} and Theorem \ref{THEOREM MAIN (4,4)} in a unified manner.
\begin{proof}[Proof of Theorems \ref{THEOREM MAIN (3,6)} and \ref{THEOREM MAIN (4,4)}]
Suppose that $p \in \p$ and $p$ is not sos. Then there exists an extreme ray $\ell$ of $\sq^*$ such that $\ell(p) <0$ and $\ell(q)\geq 0$ for all $q \in \sq$. Since $\ell(p) <0$ and $p$ is nonnegative it follows that $\ell$ does not correspond to point evaluation. Let $\ql$ be the quadratic from associated with $\ell$ and let $\wl$ be the kernel of $\ql$. Then by Theorem \ref{COROLLARY Dimension Kernel} we have $\dim \wl=n$ and by Lemma \ref{NEWLEMMA Bertini} we can find $f_1,\dots,f_{n-1} \in \wl$ intersecting transversely in $s=d^{n-1}$ projective points $\gamma_1,\ldots,\gamma_s$. By Corollary \ref{NEWCOR Number of Zeroes} we know that at most two of $\gamma_i$ are complex and by Lemma \ref{NEWLEMMA Relation Structure} the linear functional $\ell$ has the desired form.
\end{proof}


\begin{proof}[Proof of Corollaries \ref{NEWCOR Pos 3,6} and \ref{NEWCOR Pos 4,4}]
Let $p$ be a strictly positive form on the boundary of $\sq$. Then there exists an extreme ray $\ell$ of the dual cone $\sq^*$, such that $\ell(p)=0$. Now suppose that $p=\sum f_i^2$ for some $f_i \in \hd$. It follows that $\ql(f_i)=0$ for all $i$ and since $\ql$ is a positive semidefinite quadratic form we see that all $f_i$ lie in the kernel $\wl$ of $\ql$. By Theorem \ref{COROLLARY Dimension Kernel} we know that $\dim \wl=n$ and therefore $p$ is a sum of squares of forms coming from a $n$ dimensional subspace of $\h$. It follows that $p$ is a sum of at most $n$ squares.

Now suppose that $p$ is a sum of $n-1$ or fewer squares, $p=f_1^2+\dots+f_{n-1}^2$ with some $f_i$ possibly zero. Since $p$ is strictly positive we know that the forms $f_i$ have no common real zeroes. Therefore we found $n-1$ forms $f_i \in \wl$ that have no common real zeroes (if $p$ was a sum of fewer than $n-1$ squares then we can add arbitrary $f_i$ to get their number up to $n-1$). By the proof of Lemma \ref{NEWLABEL 4,4} we know that $n-1$ generic forms in $\wl$ intersect transversely, and hence we can find forms $f_i' \in \wl$ in a neighborhood of $f_i$  such that $f_i'$ intersect transversely in $d^{n-1}$ complex points. This is a contradiction by Corollary \ref{NEWCOR Number of Zeroes}.
\end{proof}

We now examine the two cases of Corollary \ref{NEWCOR Number of Zeroes}: The case of fully real intersection and the case of one complex conjugate pair. In each of these cases there exist psd forms $\ql$ corresponding to extreme rays of $\sq^*$. We provide explicit equations of these extreme rays, based on the Cayley-Bacharach relation, thus giving us a complete description of the extreme rays of $\sq^*$.

\section{Fully Real Intersection} \label{SECTION FULLY REAL INTERSECTION}

We have already examined the difference between attainable values on fully real intersection for psd and sos forms in Section \ref{SECTION GEOMETRY OF EVALUATIONS}. Now we describe the dual picture of all the linear inequalities that come from fully real intersections, which hold on $\sq$ but fail on $\p$.

Suppose that for the cases $(3,6)$ and $(4,4)$ a linear functional $\ell \in \h^*$ spans an extreme ray of $\Sigma_{n_2d}^*$ that does not correspond to point evaluation. Let $\wl$ be the kernel of $\ql$ and suppose that $f_1,\dots,f_{n-1} \in \wl$ intersect transversely in $s=d^{n-1}$ real projective points $\gamma_1,\ldots,\gamma_s$. Let $v_1,\ldots,v_s$ be affine representatives for $\gamma_1,\dots,\gamma_s$ and let $u_1p(v_1)+\ldots+u_sp(v_s)=0$ with $u_i \in \mathbb{R}$ be the unique Cayley-Bacharach relation on the points $v_i$.
\begin{theorem}\label{UNI THEOREM Real Intersection}
The form $\ql$ can be uniquely written as
$$\ql(f)=a_1f(v_1)^2+\ldots+a_sf(v_s)^2 \foreq f \in H_{n,d},$$ with a single negative coefficient $a_k$, the rest of the $a_i$ positive and
$$\sum_{i=1}^s \frac{u_i^2}{a_i}=0.$$
Furthermore any such form is extreme in $\sq^*$
\end{theorem}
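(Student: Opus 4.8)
The plan is to prove both halves of the statement — that such an $\ell$ produces a $\ql$ of the asserted shape, and conversely that every $\ql$ of that shape is extreme — using the machinery already in place. Write $E\colon\hd\to\mathbb{R}^s$ for the evaluation map $E(f)=(f(v_1),\dots,f(v_s))$. By Lemma \ref{NEWLEMMA CB} the only linear relation among the values of forms in $\hd$ on the $v_i$ is the Cayley--Bacharach relation, so $E(\hd)$ is exactly the hyperplane $u^{\perp}\subset\mathbb{R}^s$, and $\dim\ker E=\dim\hd-s+1$, which equals $n-1$ in both cases $(3,6)$ and $(4,4)$.

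For the forward direction, since $\ell$ does not come from point evaluation, Corollary \ref{COROLLARY no common zeroes} gives $\vc(\wl)=\emptyset$, and by Theorem \ref{COROLLARY Dimension Kernel} we have $\dim\wl=n$; completing $f_1,\dots,f_{n-1}$ to a basis $f_1,\dots,f_n$ of $\wl$ then forces $f_n(v_i)\ne 0$ for all $i$. Lemma \ref{NEWLEMMA Relation Structure} yields $\ell=\sum_i \big(u_i/f_n(v_i)\big)\ell_{v_i}$ with real coefficients, so $\ql(f)=\ell(f^2)=\sum_i a_i f(v_i)^2$ with $a_i=u_i/f_n(v_i)\ne 0$; uniqueness of this expression follows because two diagonal forms agreeing on $u^{\perp}$ (with all $u_i\ne 0$ and $s\ge 3$) must have the same coefficients. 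Next, $\ker E\subseteq\wl$ with $\dim\wl=\dim\ker E+1$, so there is $g\in\wl\setminus\ker E$; putting $w=E(g)\ne 0$, the equation $\ql(g)=0$ together with $g\in\ker\ql$ gives $\sum_i a_i w_i y_i=0$ for all $y\in u^{\perp}$, i.e. $(a_i w_i)_i=\lambda u$ for some $\lambda$, and since $w\in u^{\perp}$ we get $0=\sum_i u_i w_i=\lambda\sum_i u_i^2/a_i$ with $\lambda\ne 0$; this is the asserted identity $\sum_i u_i^2/a_i=0$. The same computation rules out all $a_i$ being positive (that would force $w=0$), while Cauchy eigenvalue interlacing for the restriction of $\operatorname{diag}(a_1,\dots,a_s)$ to the hyperplane $u^{\perp}$, on which it is psd because $\ql$ is, shows at most one $a_i$ is negative; hence exactly one is.

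For the converse, given the data $v_i$, $u_i$, $a_i$ as in the statement, set $\ell=\sum_i a_i\ell_{v_i}\in\h^*$ so that $\ql(f)=\sum_i a_i f(v_i)^2$. If $a_k$ is the negative coefficient, then for $f\in\hd$ the vector $x=E(f)$ lies in $u^{\perp}$, so $u_k x_k=-\sum_{i\ne k}u_i x_i$, and Cauchy--Schwarz with the positive weights $u_i^2/a_i$ ($i\ne k$), whose sum equals $-u_k^2/a_k=u_k^2/|a_k|$ by the relation, gives $\sum_i a_i x_i^2\ge 0$; thus $\ql$ is psd and $\ell\in\sq^*$ by Lemma \ref{LEMMA dual of sos cone}. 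For extremality, the bilinear-form computation above run in reverse identifies $\ker\ql=E^{-1}(\mathbb{R}w)$ with $w=(u_i/a_i)_i$, which has dimension $n$ and empty zero locus, since it contains $f_1,\dots,f_{n-1}$ — whose only common zeroes are the real points $v_i$ — together with a form nonvanishing at every $v_i$. If $\ql=Q_1+Q_2$ with $Q_1,Q_2\in\sq^*$, then $\wl\subseteq\ker Q_i$, so by Lemma \ref{LEMMA kernel structure} the functional defining $Q_i$ vanishes on $\langle\wl\rangle_{2d}$; but by Theorem \ref{THEOREM Gorenstein} the ideal generated by $\wl$ is Gorenstein with socle degree $2d$, so $\langle\wl\rangle_{2d}$ is a hyperplane in $\h$ and the functional annihilating it is unique up to scale. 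Hence $Q_1$ and $Q_2$ are multiples of $\ql$, and $\ql$ is extreme.

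The heart of the matter is the identity $\sum_i u_i^2/a_i=0$: it is exactly what the restriction to the Cayley--Bacharach hyperplane $u^{\perp}$ records of the fact that $\ker\ql$ is one dimension larger than $\ker E$ — equivalently, one larger than a naive count of real point evaluations would predict. Keeping straight the interplay between $\wl$, $\ker E$, the hyperplane $u^{\perp}$, and the diagonal form $\operatorname{diag}(a_i)$, and then in the converse converting the relation into the Cauchy--Schwarz psd-certificate, is where the actual work lies; once those are in hand, extremality is a short reprise of the Gorenstein duality already used to prove Theorem \ref{COROLLARY Dimension Kernel}.
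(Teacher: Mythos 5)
Your proof is correct, and it takes a genuinely different route from the paper's. The paper reduces the whole theorem to classifying the extreme rays of the auxiliary cone $S_L$ of diagonal quadratic forms $\sum a_ix_i^2$ that are psd on the Cayley--Bacharach hyperplane $L=u^{\perp}$: it runs a case analysis on the signs of the $a_i$ (a zero or a second nonpositive coefficient each lead to contradictions after eliminating one variable via the relation), and then pins down the critical negative coefficient by a Lagrange-multiplier computation, obtaining $a_s^{\star}=-u_s^2\big/\sum_{i<s}u_i^2/a_i$, which is the identity $\sum_i u_i^2/a_i=0$ in disguise. You instead derive that identity structurally, from the fact that $\wl$ exceeds $\ker E$ by exactly one dimension (the element $g$ with $E(g)=w$ forces $(a_iw_i)_i\in\mathbb{R}u$ and $w\perp u$), get the sign pattern from eigenvalue interlacing rather than case analysis, and certify psd-ness in the converse by Cauchy--Schwarz rather than by optimization. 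The two approaches are of comparable length; yours makes the provenance of the identity $\sum u_i^2/a_i=0$ more transparent. One place where your argument is actually more complete than the paper's: the paper's Proposition only establishes extremality within the subcone $S_L$ of diagonal forms, whereas the theorem asserts extremality in $\sq^*$; your Gorenstein-socle argument (the annihilator of the hyperplane $\langle\wl\rangle_{2d}$ is one-dimensional, so any psd summand of $\ql$ is a multiple of it) closes exactly that gap, reusing the duality already present in the proof of Theorem \ref{COROLLARY Dimension Kernel}.
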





The key to the unified description in these cases is the uniqueness of the Cayley-Bacharach relation, which holds for both $(3,6)$ and $(4,4)$.

Let $E: \hd \rightarrow \rr^s$ be the evaluation map that sends $f \in \hd$ to its values at the points $v_i$:
$$E(f)=(f(v_1),\ldots,f(v_s)).$$

Let $L$ be the image of $\hd$ under $E$. Since the forms in $\hd$ satisfy a unique relation, it follows that $L$ is the following hyperplane:
$$L=\left\{x \in \rr^s \hspace{2mm} \mid \hspace{2mm} u_1x_1+\ldots+u_sx_s=0 \right\}.$$

We would like to classify all positive semidefinite quadratic forms $\ql$ on $\hd$ with $$Q_{\ell}=a_1f^2(v_1)+\ldots+a_sf^2(v_s),$$ and coefficients $a_i \in \mathbb{R}$. By Lemma \ref{NEWLEMMA Relation Structure} the extreme rays $\ell$ of $\sq^*$ are guaranteed to have this form with points $v_i$ coming from transverse intersection of 2 cubics or 3 quadratics. In terms of the evaluations map we would like to find all quadratic forms $Q: \rr^s \rightarrow \mathbb{R}$ given by $Q=a_1x_1^2+\ldots+a_sx^2_s$ that are positive semidefinite on the hyperplane $L$.

Let $S_L$ be the cone of diagonal quadratic forms $Q=a_1x_1^2+\ldots+a_sx^2_s$ that are positive semidefinite on the hyperplane $L$. Theorem \ref{UNI THEOREM Real Intersection} follows immediately from the following proposition:
\begin{prop}\label{PROP FULLY REAL INTERSECTION}
Suppose that $Q$ spans an extreme ray of $S_L$. Then either $Q=a_ix_i^2$ for some $i$ and $a_i>0$ or $Q$ has the form specified in Theorem \ref{UNI THEOREM Real Intersection}.
\end{prop}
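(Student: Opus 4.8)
The plan is to first obtain a complete inequality description of the cone $S_L$ and then read off its extreme rays; write $Q=\sum_{i=1}^s a_ix_i^2$, and recall that $s=d^{n-1}\in\{8,9\}$ (so $s\ge 3$) and that all Cayley--Bacharach coefficients $u_i$ are nonzero. As a first step I would show that a form $Q\in S_L$ has at most one negative coefficient: $Q$ is diagonal, so its eigenvalues are the $a_i$, and if two were negative then the compression of $Q$ to the hyperplane $L$ would have a negative eigenvalue by Cauchy interlacing, contradicting $Q|_L\succeq 0$. Moreover, if some $a_k<0$ then every other $a_i$ must be strictly positive: if $a_j=0$ with $j\ne k$, the point $x\in L$ with $x_k=u_j$, $x_j=-u_k$ and all other coordinates zero has $Q(x)=a_ku_j^2<0$.

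Next I would derive the one remaining inequality. If all $a_i\ge 0$ then $Q$ is positive semidefinite on all of $\mathbb{R}^s$, hence on $L$, so $Q\in S_L$. Otherwise, relabeling so that $a_1<0$ and $a_2,\dots,a_s>0$, eliminate $x_1$ on $L$ via $x_1=-u_1^{-1}\sum_{i\ge 2}u_ix_i$; then $Q|_L$ becomes the form on $\mathbb{R}^{s-1}$ with matrix $N=D+c\,vv^{T}$, where $D=\operatorname{diag}(a_2,\dots,a_s)\succ 0$, $v=(u_2,\dots,u_s)$, and $c=a_1/u_1^2<0$. Writing $N=D^{1/2}\bigl(I+c\,D^{-1/2}vv^{T}D^{-1/2}\bigr)D^{1/2}$, the bracket has eigenvalues $1$ and $1+c\,v^{T}D^{-1}v$, so $N\succeq 0$ iff $1+c\,v^{T}D^{-1}v\ge 0$; multiplying by $u_1^2>0$ and dividing by $a_1<0$, this is exactly $\sum_{i=1}^s u_i^2/a_i\le 0$, with equality iff $N$ is singular, in which case $\ker N=\mathbb{R}\,(u_2/a_2,\dots,u_s/a_s)$, which pulls back in $L$ to the line through $w=(u_1/a_1,\dots,u_s/a_s)$. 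Thus $S_L$ consists of the forms with all $a_i\ge 0$, together with the forms having exactly one negative coefficient $a_k$, all others $>0$, and $\sum_i u_i^2/a_i\le 0$; and, crucially, the kernel direction $w$, when it exists, has \emph{no zero coordinate}.

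Now I would classify the extreme rays. If $Q$ is extreme with all $a_i\ge 0$, it cannot have two nonzero coefficients (it would split off one square as $Q=a_{i}x_{i}^2+(Q-a_{i}x_{i}^2)$, a sum of two non-proportional members of $S_L$), so $Q=a_ix_i^2$ with $a_i>0$; conversely each $a_ix_i^2$ is extreme, because if $a_1x_1^2=Q'+Q''$ in $S_L$ then $Q'$ and $Q''$ both vanish on $M=L\cap\{x_1=0\}$ (of dimension $s-2\ge 1$), hence $M\subseteq\ker(Q'|_L)$; writing $Q'=\sum b_ix_i^2$ and pairing against $L$ shows $(0,b_2x_2,\dots,b_sx_s)\in\mathbb{R}u$ for all $x\in M$, and since $u_1\ne 0$ this forces $b_i=0$ for $i\ge 2$, so $Q'\propto Q$. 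If $Q$ has one negative coefficient with $\sum_i u_i^2/a_i<0$ strictly, then $Q|_L\succ 0$ and $Q\pm\epsilon x_j^2\in S_L$ for small $\epsilon$ and any $j$ with $a_j>0$, so $Q$ is not extreme. Finally, if $Q$ has one negative coefficient and $\sum_i u_i^2/a_i=0$ (the form of Theorem~\ref{UNI THEOREM Real Intersection}), then for any decomposition $Q=Q'+Q''$ in $S_L$ we have $Q'(w)=Q''(w)=0$, since $Q'+Q''$ vanishes at $w\in L$ and both summands are nonnegative on $L$; as $w_i\ne 0$ for all $i$, either $Q'=0$, or $Q'$ has a negative coefficient, and then Step~2 applied to $Q'$ gives $\ker(Q'|_L)=\mathbb{R}w$, so $(u_i/b_i)$ is proportional to $(u_i/a_i)$ with positive factor (a negative factor would give $Q'$ at least two negative coefficients), whence $Q'\propto Q$. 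This proves Proposition~\ref{PROP FULLY REAL INTERSECTION} and, in fact, the converse extremality assertion of Theorem~\ref{UNI THEOREM Real Intersection}.

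The technical heart, and the step I expect to be the main obstacle, is the derivation of the inequality $\sum_i u_i^2/a_i\le 0$ from positive semidefiniteness on the hyperplane $L$, together with the exact identification of the kernel direction $w=(u_i/a_i)_i$ in the equality case. Everything downstream rests on the observation that $w$ has no zero coordinate --- this is precisely what prevents the boundary forms from being decomposable --- so the argument succeeds only if the rank-one update computation and its degenerate case are carried out correctly.
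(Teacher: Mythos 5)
Your proof is correct, and it follows the same overall strategy as the paper --- analyze the diagonal form restricted to the hyperplane $L$, show there is at most one negative coefficient and no zero coefficient once a negative one is present, identify the threshold condition $\sum_i u_i^2/a_i\le 0$ with kernel vector $(u_1/a_1,\dots,u_s/a_s)$ in the equality case, and use the fact that this vector has no zero coordinate to force extremality --- but the execution of each step is genuinely different. Where the paper substitutes $x_1=-(u_2x_2+\ldots+u_sx_s)/u_1$ and inspects coefficients, you use Cauchy interlacing for ``at most one negative eigenvalue'' and an explicit test point $x_k=u_j$, $x_j=-u_k$ for ``no zero coefficients''; where the paper finds the critical value $a_s^{\star}$ by Lagrange multipliers, you compute the spectrum of the rank-one update $D+c\,vv^{T}$ directly, which has the advantage of delivering the kernel direction and the equivalence ``psd on $L$ iff $\sum_i u_i^2/a_i\le 0$'' in one stroke rather than as a separate verification. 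Your treatment of extremality is also more complete than the paper's: the paper asserts tersely that $Q$ is the unique form in $S_L$ whose kernel contains $v$, and does not spell out why $a_ix_i^2$ is extreme or why a form with all $a_i\ge 0$ and two nonzero terms decomposes; you supply explicit decomposition arguments for both, including the sign analysis ($\lambda<0$ would produce two negative coefficients) needed to conclude $Q'\propto Q$ in the boundary case. The net effect is that your write-up proves both Proposition \ref{PROP FULLY REAL INTERSECTION} and the converse extremality claim of Theorem \ref{UNI THEOREM Real Intersection} with all details filled in.
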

\begin{proof}
Let $Q=a_1x_1^2+\ldots+a_sx^2_s$ span an extreme ray of $S_L$. If all coefficients $a_i$ are nonnegative then since $Q$ spans an extreme ray it follows that $Q=a_ix_i^2$ for some $i$ and $a_i>0$.

Suppose now that one of the coefficients $a_i$ is zero. Without loss of generality we may assume $a_s=0$. Then we claim that $Q=a_ix_i^2$ for some $i<s$ and $a_i>0$.

First we show that all other coefficients must be nonnegative. Suppose that $a_s=0$ and $a_1 <0$. From the equation of $L$ we can write $x_1=-(u_2x_2+\ldots+u_sx_s)/u_1$. Therefore the form
$$Q=a_1\frac{(u_2x_2+\ldots+u_sx_s)^2}{u_1^2}+a_2x_2^2+\ldots+a_{s-1}x_{s-1}^2$$
is positive semidefinite for all values of $x_2,\ldots,x_s$. However, the coefficient of $x_s^2$ is strictly negative, which is a contradiction. Therefore we can write $Q=a_1x_1^2+\ldots+a_{s-1}x_{s-1}^2$ with $a_i \geq 0$. Since $Q$ spans an extreme ray it follows that $Q=a_ix_i^2$ for some $i<s$ and $a_i>0$.

Next we claim that if one of $a_i$ is negative then the rest are strictly positive. Suppose that $a_1<0$ and $a_2 \leq 0$. Then again write $x_1=-(u_2x_2+\ldots+u_sx_s)/u_1$ and
$Q=a_1\frac{(u_2x_2+\ldots+u_sx_s)^2}{u_1^2}+a_2x_2^2+\ldots+a_{s}x_{s}^2$. Now the coefficient of $x_2^2$ is strictly negative, which is a contradiction.

Now we have only one case left: one $a_i$ is negative and the rest are strictly positive. Suppose that $a_s <0$. Write $x_s=-(u_1x_1+\ldots+u_{s-1}x_{s-1})/u_s$
and $$Q=a_1x_1^2+\ldots+a_{s-1}x_{s-1}^2+a_s \frac{(u_1x_1+\ldots+u_{s-1}x_{s-1})^2}{u_s^2}.$$

Let's maximize $\frac{(u_1x_1+\ldots+u_{s-1}x_{s-1})^2}{u_s^2}$ subject to $a_1x_1^2+\ldots+a_{s-1}x_{s-1}^2=1$. Applying Lagrange multipliers we see that
$x_i=\lambda u_i/a_i$ for some $\lambda$ and all $i \leq s-1$. Now we find the value of $a_s$ that makes $Q(u_1/a_1,\ldots,u_{s-1}/a_{s-1})=0$. We see that this happens for $$a_s^{\star}=\frac{-u_s^2}{\frac{u_1^2}{a_1}+\ldots+\frac{u_{s-1}^2}{a_{s-1}}}.$$ It is clear that any $a_s \geq a_s^{\star}$ will result in a psd form $Q$. However, if $a_s > a_s^{\star}$ then the form $Q$ is positive definite on $L$ and therefore it does not lie on the boundary of $S_L$ and does not span an extreme ray.

With $a_s=a_s^{\star}$ the kernel of $Q$  is spanned by the vector $v=\left(u_1/a_1,\ldots,u_s/a_s\right)$. We see that (up to a constant multiple) $Q$ is the only form in $S_L$ with kernel that includes $v$. Therefore $Q$ is extreme in $S_L$.
\end{proof}
\section{One Complex Pair}\label{SECTION ONE COMPLEX PAIR}
We now examine the last case of intersection with one complex conjugate pair of zeroes. Suppose that $\ell$ spans an extreme ray of $\sq^*$ that does not correspond to point evaluation. Let $\wl$ be the kernel of $\ql$ and suppose that $f_1,\dots,f_{n-1} \in \wl$ intersect transversely in $s=d^{n-1}$ projective points $\gamma_1,\ldots,\gamma_s$ with a single complex conjugate pair and the rest of $\gamma_i$ real. Let $v_1,\ldots,v_{s-2}$ be affine representatives for the real $\gamma_i$ and $z, \bar{z}$ be affine representatives for the complex roots chosen such that $$u_1f(v_1)+\ldots+u_{s-2}f(v_{s-2})+f(z)+f(\bar{z})=0,$$ \noindent with $u_i \in \mathbb{R}$, is the unique Cayley-Bacharach relation on the points $v_i,z,\bar{z}$.
\begin{theorem}\label{UNI THEOREM Complex Pair}
The form $\ql$ can be uniquely written as
$$\ql(f)=a_1f(v_1)^2+\ldots+a_{s-2}f(v_{s-2})^2+4m\left(\operatorname{Re} z\right)^2-4t\left(\operatorname{Im} z\right)^2 \foreq f \in \hd, $$ with all $a_i>0$ and $m$ and $t$ satisfying

$$\frac{2m}{m^2+t^2}+\sum_{i=1}^{s-2}\frac{u_i^2}{a_i}=0.$$
Furthermore any such form is extreme in $\sq^*$.
\end{theorem}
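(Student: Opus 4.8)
The plan is to follow the template of Section~\ref{SECTION FULLY REAL INTERSECTION}, where Theorem~\ref{UNI THEOREM Real Intersection} was obtained by reducing to a classification of diagonal forms positive semidefinite on an explicit hyperplane (Proposition~\ref{PROP FULLY REAL INTERSECTION}); the only new ingredient here is the single conjugate pair $z,\bar z$. First I would pin down the shape of $\ql$. By Lemma~\ref{NEWLEMMA Relation Structure}, $\ell=\sum_{i=1}^{s-2}\frac{u_i}{f_n(v_i)}\ell_{v_i}+\frac1{f_n(z)}\ell_z+\frac1{f_n(\bar z)}\ell_{\bar z}$, so writing $a_i=u_i/f_n(v_i)$ (real, since $v_i,f_n$ are real) and $\mu=1/f_n(z)$ we get
$$\ql(f)=\sum_{i=1}^{s-2}a_i f(v_i)^2+\mu f(z)^2+\bar\mu f(\bar z)^2=\sum_{i=1}^{s-2}a_i f(v_i)^2+2\operatorname{Re}(\mu f(z)^2),\qquad f\in\hd.$$
The last summand is a real quadratic form in $(\operatorname{Re} f(z),\operatorname{Im} f(z))$ of signature $(1,1)$ (one positive and one negative eigenvalue, since $\mu\neq0$); writing $\mu=m+it$ it is the block $4m(\operatorname{Re} z)^2-4t(\operatorname{Im} z)^2$ of the statement. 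This gives the claimed form of $\ql$, and uniqueness is immediate from Lemma~\ref{NEWLEMMA Relation Structure}. Note also that, since $u_i^2/a_i=u_if_n(v_i)$ and $\frac{2m}{m^2+t^2}=2\operatorname{Re}(1/\mu)=f_n(z)+f_n(\bar z)$, the displayed relation $\frac{2m}{m^2+t^2}+\sum u_i^2/a_i=0$ is exactly the Cayley--Bacharach relation of Lemma~\ref{NEWLEMMA CB} evaluated at $f_n$.

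Next I would set up the hyperplane. Let $E:\hd\to\rr^s$ send $f$ to $(f(v_1),\dots,f(v_{s-2}),\operatorname{Re} f(z),\operatorname{Im} f(z))$. Taking real and imaginary parts of $u_1f(v_1)+\dots+u_{s-2}f(v_{s-2})+f(z)+f(\bar z)=0$ (the imaginary part being vacuous) and counting dimensions as in Section~\ref{SECTION FULLY REAL INTERSECTION} --- here $\ker E=\langle f_1,\dots,f_{n-1}\rangle$ is $(n-1)$-dimensional, so the image is $(s-1)$-dimensional --- the image of $E$ is the hyperplane $L=\{x\in\rr^s\mid u_1x_1+\dots+u_{s-2}x_{s-2}+2x_{s-1}=0\}$. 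Thus $\ql=Q\circ E$ with $Q=\sum_{i=1}^{s-2}a_ix_i^2+(\text{the block in }x_{s-1},x_s)$, and, exactly as Theorem~\ref{UNI THEOREM Real Intersection} reduced to Proposition~\ref{PROP FULLY REAL INTERSECTION}, the theorem reduces to classifying the extreme rays of the cone $S_L$ of such forms $Q$ that are positive semidefinite on $L$.

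Then I would carry out the classification by mimicking Proposition~\ref{PROP FULLY REAL INTERSECTION}. The coordinate $x_s=\operatorname{Im} f(z)$ is unconstrained on $L$, so positive semidefiniteness forces the negative eigenvalue of the block to sit in the $x_{s-1}$-direction --- the one appearing in the equation of $L$ --- which gives $m<0$ (and $-4t\ge0$). With the negative slot used up this way, the eliminate-and-contradict argument of Proposition~\ref{PROP FULLY REAL INTERSECTION} shows an extreme $Q$ cannot have any $a_i$ zero or negative, so all $a_i>0$. Substituting $x_{s-1}=-\tfrac12\sum_{i\le s-2}u_ix_i$ from the equation of $L$ and completing the square in $x_s$, one is left with an inequality of the form $\sum a_ix_i^2\ge c\,(\sum u_ix_i)^2$ (with $c>0$ built from $m,t$), which by the Cauchy--Schwarz inequality with weights $1/a_i$ holds iff $c\sum u_i^2/a_i\le1$; the form lies on the boundary of $S_L$ --- equivalently $Q|_L$ acquires a one-dimensional kernel spanned by $(u_1/a_1,\dots,u_{s-2}/a_{s-2},\ast,0)$ --- exactly when $c\sum u_i^2/a_i=1$, which unwinds to the stated relation $\frac{2m}{m^2+t^2}+\sum u_i^2/a_i=0$. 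Finally, when this relation holds the kernel of $\ql$ on $\hd$ is $n$-dimensional and $Q$ is the unique form of $S_L$ with that kernel, so Lemma~\ref{LEMMA sections of psd matrix cone} gives that it spans an extreme ray of $S_L$, hence of $\sq^*$.

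The main obstacle is tracking the conjugate-pair block in lockstep with the geometry of $L$: one has to describe the signature-$(1,1)$ form $2\operatorname{Re}(\mu f(z)^2)$ in real coordinates so as to reach the clean expression $4m(\operatorname{Re} z)^2-4t(\operatorname{Im} z)^2$ with the precise constants $m,t$ read off from $\mu=1/f_n(z)$, and, crucially, to see that its negative-eigenvalue direction is exactly the coordinate occurring in the equation of $L$ while its positive direction is the one $L$ leaves free. This matching is what forces $m<0$ and all $a_i>0$ and what makes the Cauchy--Schwarz step produce the stated relation; it is the only place that genuinely differs from the fully real case of Section~\ref{SECTION FULLY REAL INTERSECTION}, and once it is in place the rest is the same bookkeeping, with the extra free variable $x_s$ simply carried along as a nonnegative summand.
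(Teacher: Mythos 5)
Your proposal is correct and takes essentially the same route as the paper: pass to the real evaluation map whose image is a hyperplane $L$ leaving $\operatorname{Im} f(z)$ unconstrained, deduce $m<0$ from the coefficient of that free coordinate, complete the square to reduce to the fully real situation of Proposition \ref{PROP FULLY REAL INTERSECTION} (your Cauchy--Schwarz step is the same computation the paper does via Lagrange multipliers), and conclude extremality from uniqueness of the form with the given kernel. The only blemish is the parenthetical claim that psd-ness forces ``$-4t\ge 0$'': no sign condition on $t$ is forced (the negative eigendirection of the conjugate-pair block need not be a coordinate axis), but this is never used, since $c>0$ follows from $m<0$ alone.
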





Again we give a unified presentation based on the uniqueness of the Cayley-Bacharach relation. 
We construct the real evaluation map $E_{\rr}: \hd \rightarrow \mathbb{R}^{s}$ as follows:
$$E_{\rr}(f)=(f(v_1),\ldots,f(v_{s-2}),2\operatorname{Re} f(z), 2\operatorname{Im} f(z)).$$
\noindent Let $L$ be the image of $\hd$ under $E$. Then $L$ is the following hyperplane:
$$L=\left\{x \in \mathbb{R}^{s} \hspace{2mm} \mid \hspace{2mm} u_1x_1+\ldots+u_{s-2}x_{s-2}+x_{s-1}=0\right\}.$$
Note that $L$ does not depend on $x_{s}=2\operatorname{Im} f(z).$

We would like to classify all positive semidefinite quadratic forms $\ql$ on $\hd$ with $$Q_{\ell}=a_1f^2(v_1)+\ldots+a_{s-2}f^2(v_{s-2})+bf^2(z)+\bar{b}f^2(\bar{z}),$$ and coefficients $a_i \in \mathbb{R}$ and $b \in \mathbb{C}$. By Lemma \ref{NEWLEMMA Relation Structure} the extreme rays $\ell$ of $\sq^*$ are guaranteed to have this form with points $v_i$ and $z$ coming from transverse intersection of 2 cubics or 3 quadratics. 

Let $b=m+t\sqrt{-1}$. In terms of the evaluation map we would like to find all quadratic forms $Q: \mathbb{R}^{s} \rightarrow \mathbb{R}$ given by
$$Q=a_1x_1^2+\ldots+a_{s-2}x^2_{s-2}+\frac{m}{2}\left(x_{s-1}^2-x_{s}^2\right)-tx_{s-1}{x_{s}}.$$
that are positive semidefinite on the hyperplane $L$. Let $S_L$ be the convex cone of all such quadratic forms.


\begin{prop}
Suppose that $Q$ spans an extreme ray of $S_L$. Then either $Q=a_ix_i^2$ for some $i$ and $a_i>0$ or $Q$ has the form specified by Theorem \ref{UNI THEOREM Complex Pair}.
Conversely, all such forms span extreme rays of $S_L$.
\end{prop}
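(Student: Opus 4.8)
The plan is to mirror the structure of the proof of Proposition~\ref{PROP FULLY REAL INTERSECTION}, working entirely in the coordinates supplied by the evaluation map $E_{\rr}$ and the hyperplane $L$, but now one of the ``coordinate directions'' is a two-dimensional block carrying the indefinite complex contribution. Write $Q=\sum_{i=1}^{s-2}a_ix_i^2+\tfrac{m}{2}(x_{s-1}^2-x_s^2)-tx_{s-1}x_s$ and restrict to $L$ by eliminating $x_{s-1}=-(u_1x_1+\ldots+u_{s-2}x_{s-2})$, leaving a quadratic form in the free variables $x_1,\dots,x_{s-2},x_s$. First I would dispose of the degenerate possibilities exactly as in the fully real case: if $Q$ is already positive semidefinite on all of $\mathbb{R}^s$ as a sum of squares in the obvious way then extremality forces $Q=a_ix_i^2$; and I must rule out that any $a_i\le 0$ for $i\le s-2$ by the same elimination trick --- substituting the linear expression for $x_{s-1}$ and reading off the coefficient of a surviving squared variable produces a strictly negative coefficient, contradicting positive semidefiniteness on $L$. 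I would also record that the $2\times2$ block $\tfrac{m}{2}(x_{s-1}^2-x_s^2)-tx_{s-1}x_s$ has eigenvalues $\pm\tfrac12\sqrt{m^2+t^2}$, so unless $m>0$ the restriction to $L$ cannot be psd (the variable $x_s$ is free and receives coefficient $-\tfrac{m}{2}$), which also shows $m^2+t^2\neq 0$.

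With $a_i>0$ for $i\le s-2$ and $m>0$ established, the core of the argument is the boundary analysis. After the substitution, $Q|_L$ is a psd quadratic form $\widetilde Q(x_1,\dots,x_{s-2},x_s)$ depending on the parameters $a_i,m,t$. A form spanning an extreme ray of $S_L$ must lie on the boundary of the psd cone inside $S_L$, i.e.\ $\widetilde Q$ must be psd but not positive definite: it must have a nontrivial kernel. I would compute, by a Lagrange-multiplier / Schur-complement calculation as in the previous section, that for fixed $a_i$ the locus of $(m,t)$ making $\widetilde Q$ merely semidefinite (rather than definite) is exactly the curve $\tfrac{2m}{m^2+t^2}+\sum_{i=1}^{s-2}\tfrac{u_i^2}{a_i}=0$ --- the ``$2m/(m^2+t^2)$'' being precisely the reciprocal-type contribution of the indefinite block, playing the role that $1/a_s$ played in Proposition~\ref{PROP FULLY REAL INTERSECTION}. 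On that curve the kernel of $\widetilde Q$ is one-dimensional, spanned by an explicit vector built from $u_i/a_i$ together with a component in the $x_s$-direction coming from the imaginary part; since a psd diagonal-plus-block form in $S_L$ with a prescribed one-dimensional kernel is unique up to scaling, such $Q$ is extreme. Translating the kernel vector back through $E_{\rr}$ gives the stated shape $a_1f(v_1)^2+\ldots+4m(\operatorname{Re}z)^2-4t(\operatorname{Im}z)^2$ (the factors of $4$ coming from the $2\operatorname{Re},2\operatorname{Im}$ normalization in $E_{\rr}$), and the $(3,6)$/$(4,4)$ distinction never enters because only uniqueness of the Cayley--Bacharach relation and the dimension count $\dim\wl=n$ from Theorem~\ref{COROLLARY Dimension Kernel} are used.

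For the converse, I would take any $Q$ of the stated form and verify first that it is psd on $L$ (on the curve $\tfrac{2m}{m^2+t^2}+\sum\tfrac{u_i^2}{a_i}=0$ the Schur-complement computation shows $\widetilde Q\succeq0$ with a one-dimensional kernel), and then that it is extreme: if $Q=Q'+Q''$ with $Q',Q''\in S_L$, both summands are psd on $L$ and hence vanish on $\ker\widetilde Q$, so both lie in the one-dimensional family of forms in $S_L$ killing that kernel vector, forcing $Q',Q''$ proportional to $Q$. The same uniqueness remark used in Proposition~\ref{PROP FULLY REAL INTERSECTION} applies verbatim.

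I expect the main obstacle to be the explicit boundary computation in the second paragraph: getting the indefinite $2\times2$ block to interact cleanly with the $s-2$ diagonal terms after elimination, and in particular checking that the free variable $x_s$ (which does not appear in the equation of $L$) does not destroy semidefiniteness except exactly on the stated curve. The cleanest route is probably to diagonalize the $2\times2$ block first by rotating $(x_{s-1},x_s)$, then eliminate using $L$, reducing everything to a single Schur complement against the positive-definite diagonal part $\operatorname{diag}(a_1,\dots,a_{s-2})$; the condition for that scalar Schur complement to vanish is then visibly $\tfrac{2m}{m^2+t^2}+\sum_{i=1}^{s-2}\tfrac{u_i^2}{a_i}=0$. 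Everything else is bookkeeping parallel to the fully real case.
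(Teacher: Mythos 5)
Your overall strategy --- dispose of the degenerate cases, reduce the indefinite $2\times 2$ block by a Schur complement so that the problem collapses onto the fully real situation of Proposition \ref{PROP FULLY REAL INTERSECTION}, and then get extremality from the uniqueness of the form in $S_L$ with the prescribed kernel vector --- is exactly the paper's. But there is a sign error that, carried through, would kill the argument. You claim that positive semidefiniteness on $L$ forces $m>0$, citing the fact that the free variable $x_s$ receives coefficient $-\tfrac{m}{2}$. That observation gives $-\tfrac{m}{2}\ge 0$, i.e.\ $m\le 0$; the correct conclusion is that $m$ is strictly \emph{negative} when nonzero (and this is consistent with Theorem \ref{UNI THEOREM Complex Pair}: since $a_i>0$ and $u_i\neq 0$, the relation $\tfrac{2m}{m^2+t^2}+\sum_i u_i^2/a_i=0$ forces $m<0$). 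The error is not cosmetic: with $m>0$ and $a_i>0$ the curve you want to identify as the boundary locus is empty, so your second paragraph would produce no extreme rays at all. With $m<0$ everything goes through as in the paper: completing the square gives $Q=\sum_i a_ix_i^2+\tfrac{m^2+t^2}{2m}x_{s-1}^2-\tfrac{1}{2m}(tx_{s-1}+mx_s)^2$, the last term is a nonnegative multiple of a square (because $-\tfrac{1}{2m}>0$) that is zeroed out by choosing the unconstrained $x_s=-tx_{s-1}/m$, and the surviving coefficient $\tfrac{m^2+t^2}{2m}$ of $x_{s-1}^2$ is automatically negative, so Proposition \ref{PROP FULLY REAL INTERSECTION} applies verbatim and yields both $a_i>0$ and the stated relation.

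Two smaller points. First, your ``cleanest route'' of rotating $(x_{s-1},x_s)$ to diagonalize the block before eliminating is actually the less clean order of operations: the hyperplane $L$ constrains only $x_{s-1}$, and a rotation destroys that; the paper instead takes the Schur complement with respect to the genuinely free variable $x_s$, which leaves $L$ untouched and lands directly in the setting of Section \ref{SECTION FULLY REAL INTERSECTION}. Second, ruling out $a_j\le 0$ by ``substituting for $x_{s-1}$ and reading off the coefficient of a surviving squared variable'' is not as immediate as in the fully real case, because after substituting $x_{s-1}=-(u_1x_1+\cdots+u_{s-2}x_{s-2})$ the block contributes $\tfrac{m}{2}u_j^2$ to the coefficient of each $x_j^2$ and a cross term with $x_s$; you would need to analyze a $2\times 2$ minor in $(x_j,x_s)$ rather than a single coefficient. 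Doing the reduction first and then quoting Proposition \ref{PROP FULLY REAL INTERSECTION}, as the paper does, avoids this entirely.
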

\begin{proof}
Let $Q=a_1x_1^2+\ldots+a_{s-2}x^2_{s-2}+\frac{m}{2}\left(x_{s-1}^2-x_{s}^2\right)-tx_{s-1}{x_{s}}$ span an extreme ray of $S_L$.
If $m=0$ then in order for $Q$ to be psd on $L$ we must have $t=0$ and then all the coefficients $a_i$ are nonnegative and $Q$ is a nonnegative combination of point evaluations. Since $Q$ is extreme in $S_L$ it follows that $Q=a_ix_i^2$ for some $i$ and $a_i>0$.

If $m \neq 0$ then it must be strictly negative since $x_{s}^2$ is not constrained by $L$ and its coefficient is $-m/2$. We can complete the square in $Q$ and write
$$Q=a_1x_1^2+\ldots+a_{s-2}x^2_{s-2}+\frac{m^2+t^2}{2m}x_{s-1}^2-\frac{1}{2m}(tx_{s-1}+mx_{s})^2.$$

Since the term $-\frac{1}{2m}(tx_{s-1}+mx_{s})^2$ is always nonnegative and $x_{s}$ is unconstrained, we can always make it equal to zero by taking $x_{s}=-tx_{s-1}/m$. Therefore  $Q$ is psd if and only if $Q'=a_1x_1^2+\ldots+a_{s-2}x^2_{s-2}+\frac{m^2+t^2}{m}x_{s-1}^2$ is psd on $L'=\{x \in \mathbb{R}^{s-1} \hspace{2mm} \mid \hspace{2mm} u_1x_1+\ldots+u_{s-2}x_{s-2}+x_{s-1}=0\}$. We are in exactly the same situation as the case of fully real intersection and since the coefficient of $x_{s-1}$ is guaranteed to be negative we know from Proposition \ref{PROP FULLY REAL INTERSECTION} that all $a_i$ are positive and $$\frac{m^2+t^2}{2m}=\frac{-1}{\frac{u_1^2}{a_1}+\ldots+\frac{u_{s-2}^2}{a_{s-2}}}.$$

The resulting quadratic form will have a unique projective zero in $L$ at $$v=\left(\frac{u_1}{a_1},\ldots,\frac{u_{s-2}}{a_{s-2}},\frac{2m}{m^2+t^2},\frac{-2t}{m^2+t^2}\right).$$

It is easy to verify that up to a constant multiple there is a unique form $Q$ in $S_L$ with $v$ in the kernel, which guarantees that $Q$ is extreme and completes the proof.
\end{proof}

We would like to close the paper with a conjecture. Let $\wl$ be the kernel of an extreme ray $\ql$ of $\sq^*$, in the cases $(3,6)$ and $(4,4)$, that does not correspond to point evaluation. It follows from Corollary \ref{NEWCOR Number of Zeroes} that \textit{any} transversal intersection of $n-1$ forms in the kernel $\wl$ has at most one complex pair of zeroes. We have examined extreme rays that can be defined on such transverse intersections in Sections \ref{SECTION FULLY REAL INTERSECTION} and \ref{SECTION ONE COMPLEX PAIR}. However, we conjecture that the case of one complex pair of zeroes is not truly necessary, as we can always find a fully real intersection inside $\wl$:

\begin{conj}
Suppose that for the cases $(3,6)$ and $(4,4)$, $W$ is an $n$-dimensional subspace of $\h$ such that $\mathcal{V}_{\co}(W)=\emptyset$ and any collection of forms $f_1,\dots,f_{n-1}$ intersecting transversely in $W$ has at most $1$ complex pair of zeroes. Then there exist forms $f_1,\ldots,f_{n-1} \in W$ intersecting transversely in only real points.
\end{conj}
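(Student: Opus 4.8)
The plan is to translate the statement into a question about a real finite morphism and then run a wall‑crossing argument. Fix a basis $g_1,\dots,g_n$ of $W$. Since $\vc(W)=\emptyset$, the $g_i$ have no common complex zero, so $\phi=[g_1:\cdots:g_n]$ is a morphism $\phi\colon\co\mathbb{P}^{n-1}\to\co\mathbb{P}^{n-1}$, finite of degree $\delta:=d^{n-1}$, and it commutes with complex conjugation because $W$ is a real subspace. Under the usual dictionary an $(n-1)$-tuple $f_1,\dots,f_{n-1}\in W$ spanning a hyperplane of $W$ corresponds to a point $q\in\co\mathbb{P}^{n-1}$ with $\vc(f_1,\dots,f_{n-1})=\phi^{-1}(q)$; the tuple is real iff $q$ is real, it intersects transversely iff $q$ lies off the branch locus $B\subset\co\mathbb{P}^{n-1}$ of $\phi$, and all of its zeroes are real iff $\phi^{-1}(q)\subset\rr\mathbb{P}^{n-1}$. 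So the conjecture asks for a real $q\notin B$ whose fibre $\phi^{-1}(q)$ consists of $\delta$ real points.

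First I would note that on $\rr\mathbb{P}^{n-1}\setminus B(\rr)$, where $B(\rr)=B\cap\rr\mathbb{P}^{n-1}$, the number $N(q)$ of real points of $\phi^{-1}(q)$ is locally constant: the $\delta$ points of the fibre vary continuously with $q$ while $q\notin B$, and a real point can become non-real only by colliding with its conjugate, which forces $q\in B(\rr)$. Writing $N(q)+2k(q)=\delta$ with $k(q)$ the number of conjugate pairs, $k$ is then constant on each connected component (``chamber'') of $\rr\mathbb{P}^{n-1}\setminus B(\rr)$, and by hypothesis $k\le 1$ on every chamber; the goal is to find a chamber with $k=0$. Next I would rule out the degenerate possibility that $\phi$ has no usable real branch points: if $\phi$ had no real ramification at all, then $\phi|_{\rr\mathbb{P}^{n-1}}$ would be a submersion between compact equidimensional manifolds, hence a covering of the connected space $\rr\mathbb{P}^{n-1}$, so (using $\pi_1(\rr\mathbb{P}^{n-1})=\mathbb{Z}/2$ for $n-1\ge 2$) it would have at most two sheets, giving $N\le 2$ and $k\ge(\delta-2)/2\ge 3$, contradicting the hypothesis. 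Hence $\phi$ has real ramification, and one wants this to produce an $(n-2)$-dimensional stratum $B_{\mathrm{top}}\subset B(\rr)$ of smooth points of $B$ at which $\phi$ has the simple fold normal form.

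The engine is then the following wall crossing. Pick $q_0\in B_{\mathrm{top}}$ over which exactly two sheets of $\phi$ collide. Because $\phi$ is real, those two sheets meet at a real ramification point $p_0$: otherwise $\bar p_0\neq p_0$ would also ramify over the real point $q_0$ and at least four sheets would collide, contrary to the choice of $q_0$. In real local coordinates at $p_0$ and $q_0$ in which $\phi$ is $(x_1,\dots,x_{n-1})\mapsto(x_1^2,x_2,\dots,x_{n-1})$, crossing $B(\rr)$ transversally at $q_0$ turns the two nearby real preimages into a conjugate pair (or vice versa), while the remaining $\delta-2$ preimages vary continuously and keep their reality type. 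So the two chambers bordering $q_0$ have $k$-values differing by exactly $1$. By the hypothesis both lie in $\{0,1\}$, so one of those two chambers has $k=0$; any real $q$ in it, off $B$, then yields $n-1$ real forms in $W$ meeting transversely in $\delta$ real points, which is the conclusion.

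The main obstacle is the real-algebraic regularity hidden in the second and third paragraphs, namely showing that $B(\rr)$ is never so degenerate that the wall crossing fails to engage: one must exclude the case in which the real branch locus has dimension strictly less than $n-2$ (for instance isolated real points of ``$u^2+v^2=0$'' type, which is genuinely possible for the real locus of a complex hypersurface), since then $B_{\mathrm{top}}=\emptyset$ and $\rr\mathbb{P}^{n-1}\setminus B(\rr)$ could be a single chamber with $k=1$. Ruling this out should use the special structure of the morphisms $\phi_W$ occurring here --- the Gorenstein property of the ideal $\langle W\rangle$, the count $\dim\wl=n$, and Corollary \ref{NEWCOR Number of Zeroes} --- or else a deformation of $W$ within the family of $n$-dimensional subspaces satisfying the hypothesis, pushing it toward a boundary configuration where a fully real fibre is forced. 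I expect the case $(3,6)$, where $n-1=2$, to be the harder one, since then the complement of a zero-dimensional real branch locus in $\rr\mathbb{P}^2$ need not be simply enough connected for the covering-space bound of the second paragraph to apply.
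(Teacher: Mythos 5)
First, a point of order: this statement appears in the paper as an open \emph{conjecture} --- the paper gives no proof of it --- so there is no argument of the author's to compare yours against, and a complete proof would be a genuinely new result. Your reduction is attractive and its first steps are sound: encoding $W$ as a finite degree-$d^{n-1}$ morphism $\phi\colon\co\mathbb{P}^{n-1}\to\co\mathbb{P}^{n-1}$ commuting with conjugation, so that real transverse $(n-1)$-tuples in $W$ correspond to real points $q$ off the branch locus $B$ and the conjecture becomes the existence of a real regular value with all-real fibre; the local constancy of the real fibre count $N(q)$ on $\rr\mathbb{P}^{n-1}\setminus B(\rr)$; the covering-space argument forcing $\phi$ to have real critical points (a connected covering of $\rr\mathbb{P}^{n-1}$ has at most two sheets, giving $k\ge 3$); and the observation that a simple branch point over a real $q_0$ has a real ramification point.

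The gap is the one you flag yourself, and it is not a technicality --- it is the whole content of the conjecture. The wall-crossing engine needs a point of $B(\rr)$ near which $B(\rr)$ is a hypersurface of $\rr\mathbb{P}^{n-1}$ locally separating it into two chambers, and at which $\phi$ is a simple real fold, so that $k$ jumps by exactly $1$. What you have actually established is only that the real ramification locus $R\cap\rr\mathbb{P}^{n-1}$ is nonempty. The real locus of a complex hypersurface can have any dimension from $0$ up to $n-2$, and even its top-dimensional strata may be useless for you: a stratum lying over which only a conjugate pair of non-real ramification points collide does not change $k$ when crossed, and neither does a stratum of higher-order ramification (compare $x\mapsto x^3$, where the real root count is $1$ on both sides). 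If no fold stratum of dimension $n-2$ exists, $\rr\mathbb{P}^{n-1}\setminus B(\rr)$ may be a single chamber with $k=1$ and the argument terminates without a contradiction; as you note, this is especially delicate for $(3,6)$, where $B(\rr)$ could be a finite set of points in $\rr\mathbb{P}^{2}$. Note also that your argument uses the hypotheses special to $(3,6)$ and $(4,4)$ --- the Gorenstein/Cayley--Bacharach structure of $\langle W\rangle$, $\dim W=n$, Corollary \ref{NEWCOR Number of Zeroes} --- only through the numerical bound $k\le 1$, and it is exactly at the missing step that such input would have to enter. As written, the proposal is a useful reformulation plus a partial argument, not a proof.
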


\section*{acknowledgments}
The author would like to thank anonymous referees whose suggestions greatly helped in improving the exposition and clarity of the paper, and the hospitality of the Institute for Pure and Applied Math at UCLA, where the paper was written.

\bigskip
\medskip
\noindent
Grigoriy Blekherman, Georgia Inst.~of Technology, Atlanta, USA,
{\tt greg@math.gatech.edu}


\begin{thebibliography}{20}



\bibitem{Me2} G. Blekherman, \textit{There are significantly more nonnegative polynomials than sums of squares}, Israel J. of Math., vol 183, 355-380, (2006).

\bibitem{Hilbert'sSOS} G. Blekherman, J. Hauenstein, J. C. Ottem, K. Ranestad, B. Sturmfels,\textit{Algebraic Boundaries of Hilbert's SOS Cones},  submitted for publication, arXiv:1107.1846.

\bibitem{BCR}J. Bochnak, M. Coste, M.-F. Roy, \textit{Real Algebraic Geometry,} Springer-Verlag, Berlin, (1998).

\bibitem{CD} E.~Cattani and A.~Dickenstein:
Introduction to residues and resultants, in
{\em Solving  Polynomial Equations: Foundations, Algorithms, and Applications}
(eds.~A.~Dickenstein and I.Z.~Emiris),
   Algorithms and Computation in Mathematics {\bf 14}, Springer, 2005.

\bibitem{CLR}M.D. Choi, T.Y. Lam, B. Reznick \textit{Even symmetric sextics.} Math. Z. 195, no. 4, 559-580, (1987).



\bibitem{EGH}D. Eisenbud, M. Green, J. Harris, \textit{Cayley-Bacharach theorems and conjectures}, Bull. Amer. Math. Soc.  vol. 33, no. 3, 295-324, (1996).

\bibitem{HLP}G. Hardy, E. Littlewood, G. Polya, \textit{Inequalities,} Cambridge University Press, Cambridge, (1988).

\bibitem{Harris Book} J. Harris, \textit{Algebraic Geometry. A First Course,} Graduate Texts in Mathematics, vol. 133. Springer-Verlag, New York, (1995).

\bibitem{JiawangMarkus}J. Nie, M. Schweighofer, \textit{On the complexity of Putinar's Positivstellensatz,} J. of Complex., vol. 23, no. 1, 135-150,  (2007).

\bibitem{Lasserre}J.B. Lasserre, \textit{Global optimization with polynomials and the problem of moments,}
SIAM J. Optim. vol. 11, no. 3, 796-817 (electronic), (2000/01).

\bibitem{pablo}P. Parrilo, \textit{Semidefinite programming relaxations for semialgebraic problems,} Math. Program., vol. 96, no. 2, Ser. B, 293-320, (2000/01).

\bibitem{RG}M. Ramana, A.J. Goldman, \textit{Some geometric results in semidefinite programming.} J. Global Optim., vol. 7, no. 1, 33-50, (1995).

\bibitem{Rez3} B. Reznick, \textit{Sums of Even Powers of Real Linear Forms}, Mem. Amer. Math. Soc., vol. 96, no. 463, (1992).

\bibitem{Rez1} B. Reznick, \textit{Some concrete aspects of Hilbert's 17th Problem}, Contemp. Math., no. 253,  251-272, (2000).

\bibitem{Rez2} B.Reznick, \textit{On Hilbert's construction of positive polynomials,} arXiv:0707.2156.

\bibitem{Raman} R. Sanyal, F. Sottile, B. Sturmfels \textit{Orbitopes,} Mathematika, vol. 57 (2011) 275-314.

\bibitem{Schneider}R. Schneider, \textit{Convex Bodies: the Brunn-Minkowski Theory,} Cambridge University Press, Cambridge, (1993).

\end{thebibliography}
\end{document}